\def\Xint#1{\mathchoice 
{\XXint\displaystyle\textstyle{#1}}%
{\XXint\textstyle\scriptstyle{#1}}%
{\XXint\scriptstyle\scriptscriptstyle{#1}}%
{\XXint\scriptscriptstyle\scriptscriptstyle{#1}}%
\!\int} 
\def\XXint#1#2#3{{\setbox0=\hbox{$#1{#2#3}{\int}$} 
\vcenter{\hbox{$#2#3$}}\kern-.5\wd0}} 
\def\dashint{\Xint-}
\newcommand{\ra}{\rightarrow}
\newcommand{\bey}{\begin{eqnarray*}}
\newcommand{\eey}{\end{eqnarray*}}
\newcommand{\ba}{\begin{align}}
\newcommand{\ea}{\end{align}}
\newcommand{\bea}{\begin{align*}}
\newcommand{\ena}{\end{align*}}
\newcommand{\be}{\begin{equation}}
\newcommand{\ee}{\end{equation}}
\newcommand{\R}{\mathbb R}
\newcommand{\Z}{\mathbb Z}
\newcommand{\D}{\mathcal D}
\newcommand{\M}{\mathcal M }
\newcommand{\ep}{\epsilon}
\newcommand{\dss}{\displaystyle}
\newcommand{\bc}{\begin{center}}
\newcommand{\ec}{\end{center}}
\newcommand{\vp}{\varphi}
\newcommand{\al}{\alpha}
\newcommand{\Tin}{T^{d}_{\rm in}}
\newcommand{\Tout}{T^{d}_{\rm out}}
\newcommand{\Msharp}{M^{\sharp, d}_{\lambda, Q}}
\newcommand{\Msharpquar}{M^{\sharp, d}_{\frac{1}{4}, Q}}
\DeclareMathOperator{\supp}{supp}
\newtheorem{theorem}{Theorem}[section]
\newtheorem{lemma}[theorem]{Lemma}
\newtheorem{conjecture}[theorem]{Conjecture}
\theoremstyle{definition}
\newtheorem{definition}[theorem]{Definition}
\theoremstyle{remark}
\newtheorem{remark}[theorem]{Remark}
\numberwithin{equation}{section}
\begin{document}


\subjclass[2000]{Primary 42A50}

\keywords{Commutators, two-weight inequalities, sharp weighted bounds}

\thanks{The first author is supported by the Stewart-Dorwart Faculty Development Fund
  at Trinity College and 
  grant MTM2009-08934 from the Spanish Ministry of Science and
  Innovation}

\address{David Cruz-Uribe, SFO\\
Department of Mathematics\\
Trinity College, Hartford, CT 06106, USA.}\email{david.cruzuribe@trincoll.edu}

\address{Kabe Moen\\
Department of Mathematics\\
University of Alabama, Tuscaloosa, AL, 35487, USA.}\email{kmoen@as.ua.edu}

\title[Inequalities for commutators]{Sharp norm inequalities for commutators of classical operators}
\author{David Cruz-Uribe, SFO and Kabe Moen}
\date{September 12, 2011}

\begin{abstract} 
  We prove several sharp weighted norm inequalities for commutators of
  classical operators in harmonic analysis.  We find sufficient
  $A_p$-bump conditions on pairs of weights $(u,v)$ such that $[b,T]$,
  $b\in BMO$ and $T$ a singular integral operator (such as the Hilbert
  or Riesz transforms), maps $L^p(v)$ into
  $L^p(u)$.  Because of the added degree of singularity, the
  commutators require a ``double log bump" as opposed to that
  of singular integrals, which only require single log bumps.
  For the fractional integral operator $I_\al$ we find the sharp
  one-weight bound on $[b,I_\al]$, $b\in BMO$, in terms of the
  $A_{p,q}$ constant of the weight.  We also prove sharp two-weight
  bounds for $[b,I_\al]$ analogous to those of singular integrals.  We
  prove two-weight weak type inequalities for $[b,T]$ and $[b,I_\al]$
  for pairs of factored weights.  Finally we construct several
  examples showing our bounds are sharp.
\end{abstract}

\maketitle

\bigskip

\section{Introduction}
Given a linear operator $T$ defined on the set of measurable
functions and a function $b$, we define the commutator $[b,T]$ to be
the operator
$$[b,T]f(x)=b(x)Tf(x)-T(bf)(x).$$
Commutators of singular integral operators were introduced by Coifman,
Rochberg, and Weiss \cite{CRW}, who used them to extend the classical
factorization theory of $H^p$ spaces.  They proved that if $b\in
BMO$, then $[b,T]$ is bounded on $L^p(\R^n)$,  $1<p<\infty$. 
Janson \cite{J} later showed the converse:  if $[b,T]$ is bounded, then $b\in BMO$.

Given $0<\al<n$, define the fractional integral operator $I_\al$ by
$$I_\al f(x)=\int_{\R^n} \frac{f(y)}{|x-y|^{n-\al}}\, dy.$$
The commutator $[b,I_\alpha]$ was first
considered by Chanillo~\cite{SC}, who showed that if $b\in BMO$,
$[b,I_\al]$ maps $L^p(\R^n)$ into $L^q(\R^n)$, where
$1/p-1/q={\al}/{n}$;  a dyadic version of this result and
further applications were given by Lacey \cite{Lac}.

While commutators share the same $L^p$ bounds as the
underlying operators (e.g., singular integrals are bounded on $L^p$
and fractional integrals map $L^p$ into $L^q$), they are,
nevertheless, more singular.  This fact was first observed by
considering their behavior at the endpoint.  For instance, a singular
integral operator $T$ is bounded from $L^1(\R^n)$ to
$L^{1,\infty}(\R^n)$, but  $[b,T]$, $b\in BMO$, is not.  Instead, it
satisfies a weaker modular
inequality,
$$|\{x\in \R^n: |[b,T]f(x)|>\lambda\}|\leq 
C\|b\|_{BMO} \int_{\R^n}\Phi\Big( \frac{|f(y)|}{\lambda}\Big)\, dy,$$
where $\Phi(t)=t\log(e+t)$.   See P\'erez~\cite{P1}.   A similar
result holds for fractional integrals;  see~\cite{CF}.  

The greater degree of singularity of commutators is also reflected in
the differences between the sharp weighted norm inequalities for a
commutator and the underlying operator.  This was first shown in a
recent paper by Chung, P\'erez, and Pereyra \cite{CPP}.   (See also
Chung~\cite{C-phd,Ch}.)  To state
their result, recall that for $1<p<\infty$ we say that $w$ is an $A_p$
weight (or, more simply, $w\in A_p$) if 
$$[w]_{A_p}=\sup_Q\left(\dashint_Q w(x)\, dx\right)\left(\dashint_Q
  {w(x)^{1-p'}}\, dx\right)^{p-1}<\infty,$$ 
where the supremum is taken over all cubes $Q$ with sides parallel to
the coordinate axes. 
If $T$ is a singular integral operator, then 
\[\|T\|_{L^p(w)\ra L^p(w)}\leq c[w]_{A_p}^{\max\left(1,\frac{p'}{p}\right)}, \]
and this estimate is sharp in that 
$\max(1,{p'}/{p})$ cannot be replaced by any smaller
power. 
(This result has a long history and has only recently been proved in
full generality.   See~\cite{cruz-uribe-martell-perez2010,CMP3,H,HPTV} for
details and further references.)
However, Chung, P\'erez, and Pereyra  showed that if $b\in BMO$, then 
$$\|[b,T]\|_{L^p(w)\ra L^p(w)} \leq c [w]_{A_p}^{2\max\left(1,\frac{p'}{p}\right)},$$
and this exponent is again sharp.  

In this paper we continue the study of weighted norm inequalities for
commutators.  We prove two-weight, strong type norm inequalities for commutators of
singular integrals and one and two-weight strong type norm inequalities for
commutators of fractional integrals.  In both cases the results we get
are sharp, and (like the result of Chung, Pereyra and P\'erez) they
demonstrate that commutators are more singular than the underlying
operators.  We also consider two-weight,
weak type inequalities for both operators and prove results for a
special class of weights, the so-called factored weights (which we
will define below).   These results are of interest because they
strongly suggest what the sharp results should be, and we make two
conjectures.  

\subsection*{Singular integrals}
We first consider singular integral operators.  Because of our
approach, our proofs are restricted to singular integral operators
that can be approximated by ``dyadic'' singular integral operators
that are generalizations of the Haar shifts.  (Precise definitions
will be given in Section~\ref{section:prelimaries} below.)  Such
operators include the classical singular integrals: the Hilbert
transform, Riesz transforms, and the Ahlfors-Beurling operator.  In
one dimension it also includes any convolution type singular integral
whose kernel is $C^2$: see Vagharshakyan~\cite{V}.  However, in light
of recent results~\cite{H,HPTV} we
conjecture that Theorem~\ref{main} below is true for any
Calder\'on-Zygmund singular integral.

Before stating our result for commutators, we provide some context.
It has long been known that the two-weight $A_p$ condition is not
sufficient for two-weight norm inequalities for singular integrals:
see Muckenhoupt and Wheeden~\cite{muckenhoupt-wheeden76}.   An
important substitute is the so-called $A_p$-bump condition,
\[  \sup_Q \|u^{1/p}\|_{A,Q} \|v^{-1/p}\|_{B,Q} < \infty, \]
where $A,\,B$ are Young functions and
$\|\cdot\|_{A,Q},\,\|\cdot\|_{B,Q}$ are normalized Luxemburg norms on
the cubes $Q$.  (Precise definitions are given below.)  These
conditions have been extensively studied:  see~\cite{CMP2, CMP3, CMP4,
  CP1,CP2,CP3}.    Like the Muckenhoupt $A_p$ weights, these weight classes
have two advantageous features.  First, the $A_p$-bump condition
is ``universal'': it applies simultaneously to large families of
operators.  Second, it is straightforward to check that a given pair
satisfies the condition or to construct a pair of weights that does or
does not satisfy it.

For the class of singular integrals we are
concerned with, the best result is the following.

\begin{theorem}[\cite{CMP2,cruz-uribe-martell-perez2010,CMP3}] \label{thm:sharp-sio}
Given $p$, $1<p<\infty$, suppose $(u,v)$ is a pair of weights such that 
\begin{equation} \label{eqn:sharp-sio1}
\sup_Q \|u^{1/p}\|_{A,Q} \|v^{-1/p}\|_{B,Q} < \infty, 
\end{equation}
where $A(t)=t^p\log(e+t)^{p-1+\delta}$,
$B(t)=t^{p'}\log(e+t)^{p'-1+\delta}$ for some $\delta>0$. If $T$ is any singular
integral that can be approximated by dyadic singular integrals (in
particular, if $T$ is the Hilbert transform, a Riesz transform, or the
Ahlfors-Beurling operator), then 
\[ \| Tf\|_{L^p(u)} \leq c\|f\|_{L^p(v)}. \]
Further, this result is sharp in the sense that if $\delta=0$, then it
does not hold in general.
\end{theorem}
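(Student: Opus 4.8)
The plan is to prove Theorem~\ref{thm:sharp-sio} by reducing matters to the dyadic model operators and then running a Calder\'on--Zygmund-type stopping-time argument in which the $A_p$-bump hypothesis is exploited through a generalized H\"older inequality in the Orlicz scale. More precisely, since $T$ is assumed to be approximated (in an appropriate averaged sense over dyadic grids and translation/dilation parameters) by dyadic singular integral operators $S$ of Haar-shift type, it suffices to prove the bound $\|Sf\|_{L^p(u)} \le c\|f\|_{L^p(v)}$ for these $S$ with a constant $c$ independent of the grid and of the shift complexity in a summable way. For each dyadic grid one writes $Sf = \sum_{Q} \langle f\rangle_{Q}\,(\text{Haar-type pieces on }Q)$, and the action of $S$ is controlled pointwise, up to the shift parameters, by a sparse/Carleson sum of local averages of $f$.

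The main steps, in order, are: (i) linearize and discretize, reducing to a dyadic operator and, by duality, to estimating $\sum_{Q\in\mathcal{S}} |Q|\,\langle |f| v^{-1/p}\cdot v^{1/p}\rangle_{Q}\,\langle |g| u^{1/p'}\cdot u^{-1/p'}\rangle_{Q}$ over a sparse family $\mathcal{S}$, where $g\in L^{p'}(u^{1-p'})$ with $\|g\|\le 1$; (ii) on each cube apply the generalized H\"older inequality for Luxemburg norms, $\langle FG\rangle_{Q} \le 2\|F\|_{A,Q}\|G\|_{\bar A,Q}$, together with the analogous pairing for the complementary Young functions, to split the average of $|f|$ on $Q$ as $\|v^{1/p}\|_{?,Q}\cdot\|\,|f|v^{-1/p}\|_{?,Q}$ and similarly for $g$; (iii) absorb the two ``weight'' factors into the bump hypothesis~\eqref{eqn:sharp-sio1}, leaving a sparse sum of the form $\sum_{Q\in\mathcal{S}} |Q|\,\|\,|f|v^{-1/p}\|_{\bar A,Q}\,\|\,|g|u^{-1/p'}\|_{\bar B,Q}$; (iv) invoke the sharp maximal-function bounds: because $\bar A(t)\approx t^{p'}\log(e+t)^{-1-\delta'}$ (the complementary function of $A$) satisfies the $B_{p'}$ (Dini-type) integrability condition $\int^\infty \bar A(t)/t^{p'}\,\frac{dt}{t}<\infty$, the Orlicz maximal operator $M_{\bar A}$ is bounded on $L^{p'}$, and dually $M_{\bar B}$ is bounded on $L^{p}$; (v) dominate the sparse sum by $\int M_{\bar B}(|f|v^{-1/p})\,M_{\bar A}(|g|u^{-1/p'})\,dx$ using the sparseness (Carleson packing) of $\mathcal{S}$, then finish with H\"older's inequality and the two $L^p$/$L^{p'}$ bounds from step (iv), giving $c\|f\|_{L^p(v)}\|g\|_{L^{p'}(u^{1-p'})}$.

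The main obstacle is step (iv), the verification that the specific log-power functions in the statement produce complementary Young functions in the $B_p$ class with the right exponent: one must check that for $A(t)=t^p\log(e+t)^{p-1+\delta}$ the complementary function $\bar A$ is equivalent to $t^{p'}\log(e+t)^{-1-\delta/(p-1)}$ (up to constants) and then that $\int_c^\infty \big(t^{p'}/\bar A(t)\big)^{p-1}\,\frac{dt}{t} = \int_c^\infty \log(e+t)^{1+\delta}\,\frac{dt}{t}$ --- wait, rather $\int_c^\infty \bar B(t)t^{-p}\,dt/t <\infty$, which converges precisely because the exponent on the logarithm exceeds $1$, i.e.\ because $\delta>0$. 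This is exactly where the hypothesis $\delta>0$ is used, and it is also why $\delta=0$ fails: the borderline logarithmic integral diverges, $M_{\bar A}$ is no longer bounded on $L^{p'}$, and the sparse-sum estimate breaks down. The sharpness assertion for $\delta=0$ is then established separately by exhibiting a pair of power-type weights on $\R$ (for the Hilbert transform) for which the $\delta=0$ bump condition holds but the two-weight inequality fails; this construction is deferred to the examples section of the paper and uses the fact that the Hilbert transform reproduces, on suitable test functions, the full singular average that the borderline bump cannot absorb.

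A remark on bookkeeping: the passage from the true operator $T$ to its dyadic approximants must be done so that the constants from the dyadic estimates sum against the complexity of the Haar shifts; this is routine given the cited approximation results~\cite{cruz-uribe-martell-perez2010,CMP3} and the only point to be careful about is that the bump condition~\eqref{eqn:sharp-sio1}, being stated with a fixed pair of Young functions independent of the shift, passes unchanged to every dyadic model, so no loss of $\delta$ occurs in the limit.
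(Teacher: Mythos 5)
This theorem is not proved in the present paper: it is stated as a known result with citations to~\cite{CMP2,cruz-uribe-martell-perez2010,CMP3}, so there is no proof here to compare against line-by-line. That said, the architecture you describe --- reduction to dyadic shift operators, sparse/Lerner-type domination, generalized H\"older on each cube, and closing the sum via Orlicz maximal operators whose boundedness is governed by the $B_p$ condition --- is indeed the method of the cited papers, and it is also precisely the machinery this paper deploys in Sections~\ref{section:local-mean}--\ref{section:proof-main-sio} to prove the commutator version (Theorem~\ref{main}). Your identification of the role of $\delta>0$ is exactly right: with $A(t)=t^p\log(e+t)^{p-1+\delta}$ one computes $\bar A(t)\approx t^{p'}/\log(e+t)^{1+\delta(p'-1)}$, and $\int_c^\infty \bar A(t)t^{-p'}\,dt/t = \int_c^\infty \log(e+t)^{-1-\delta(p'-1)}\,dt/t$ converges iff $\delta>0$, which is precisely the $B_{p'}$ membership that makes $M_{\bar A}$ bounded on $L^{p'}$.

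However, your bookkeeping in steps (ii)--(iii) is garbled in a way that would prevent the argument from closing if carried out literally. The correct split writes $|f|=\big(|f|v^{1/p}\big)\cdot v^{-1/p}$ and $|g|=\big(|g|u^{-1/p}\big)\cdot u^{1/p}$, giving
\begin{equation*}
\langle |f|\rangle_Q \le c\,\|fv^{1/p}\|_{\bar B,Q}\,\|v^{-1/p}\|_{B,Q},
\qquad
\langle |g|\rangle_Q \le c\,\|gu^{-1/p}\|_{\bar A,Q}\,\|u^{1/p}\|_{A,Q},
\end{equation*}
so the product $\|v^{-1/p}\|_{B,Q}\|u^{1/p}\|_{A,Q}$ is exactly what the hypothesis~\eqref{eqn:sharp-sio1} controls, while $fv^{1/p}\in L^p$ and $gu^{-1/p}\in L^{p'}$ are the objects fed to $M_{\bar B}$ and $M_{\bar A}$ respectively. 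Your version places $v^{-1/p}$ inside the $f$-norm and $v^{1/p}$ as the ``absorbed'' factor, and uses the exponent $-1/p'$ (rather than $-1/p$) on $u$ for the dual function. Neither $\|v^{1/p}\|_{\cdot,Q}$ nor $\|u^{-1/p'}\|_{\cdot,Q}$ matches the bump hypothesis, and H\"older at the end would then produce $\|fv^{-1/p}\|_p=\|f\|_{L^p(v^{-1})}$ rather than $\|f\|_{L^p(v)}$. You also pair $M_{\bar B}$ with the $f$-term in step (v) after having attached $\bar A$ to that term in step (iii); the assignments must be consistent with which norm of the weight ($\|v^{-1/p}\|_{B,Q}$ vs. $\|u^{1/p}\|_{A,Q}$) is peeled off on each side.

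Two smaller points. First, the sharpness example for $\delta=0$ in the literature is not built from power weights; the standard construction (as in the commutator example in Section~\ref{section:sharp-examples} of this paper) uses pairs of the form $(u, M_\Phi u)$ with $u$ a logarithmically decaying weight, exploiting Lemma~\ref{lemma:reverse-factorization}-type reverse factorization so that the borderline bump holds but the operator diverges. Second, your remark about uniformity in the shift complexity is well taken; in this paper that dependence is in fact only exponential in $\tau$ (Remarks~\ref{exp1} and~\ref{exp2}), which is why the present paper restricts to operators that are finite convex combinations of dyadic shifts rather than general Calder\'on--Zygmund operators.
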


Theorem~\ref{thm:sharp-sio} was proved in~\cite{CMP2} for the Hilbert
transform, and was proved in general
in~\cite{cruz-uribe-martell-perez2010,CMP3}. 

\begin{remark}
  Here and in subsequent theorems, our hypotheses can be stated in
  greater generality, replacing the ``log-bumps'' (as Young functions
  like $A$ and $B$ are generally called) by more general Young
  functions determined by the so-called $B_p$ condition; see
  Definition~\ref{defn:Bpcond}.  However, for commutators it is most
  natural to state our results in this form.  For a brief description
  of a more general formulation, see Remark~\ref{rem:general-bump} below.
\end{remark}

We can now state our main result for commutators of singular
integrals.

\begin{theorem} \label{main} 

 Given $p$, $1<p<\infty$, suppose $(u,v)$ is a pair of weights that satisfies
\begin{equation}\label{twowcond} 
\sup_Q\|u^{1/p}\|_{A,Q} \|v^{-1/p}\|_{B,Q}<\infty,
\end{equation}
where $A(t)=t^p\log(e+t)^{2p-1+\delta}$ and
$B(t)=t^{p'}\log(e+t)^{2p'-1+\delta}$, $\delta>0$.  
If $T$ is any singular
integral that can be approximated by dyadic singular integrals (in
particular, if $T$ is the Hilbert transform, a Riesz transform, or the
Ahlfors-Beurling operator) and $b\in BMO$, then
\begin{equation}\label{2weightcom}
\|[b,T]f\|_{L^p(u)}\leq c\|b\|_{BMO} \|f\|_{L^p(v)}.
\end{equation}
Further, this result is sharp in the sense that if $\delta=0$, then
inequality~\eqref{2weightcom} does not hold in general.
\end{theorem}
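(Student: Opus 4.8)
The plan is to follow the standard Calderón–Zygmund approach used for Theorem~\ref{thm:sharp-sio}, reducing to a dyadic model, but now replacing the dyadic singular integral by its commutator and exploiting the extra logarithmic smoothing that the $BMO$ function provides. First I would recall the approximation scheme: it suffices to prove the bound~\eqref{2weightcom} for $[b,S]$ where $S$ is a dyadic (Haar-shift-type) operator, with constants uniform in the shift parameters, since $T$ is approximated by such operators and the commutator operation is linear in $T$. So fix a dyadic grid $\mathcal D$ and a dyadic operator $S$, and analyze $[b,S]f = \sum_{Q\in\mathcal D} \langle\text{something}\rangle$ in the usual way.

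Next I would use the classical Cauchy–Schwarz/duality trick for commutators: write $b - b_Q$ on each dyadic cube $Q$ and split $[b,S]f$ into two pieces, one where the $BMO$ oscillation $(b-b_Q)$ multiplies $f$ inside $S$ and one where it appears outside. After pairing against a function $g\in L^{p'}(u^{1-p'})$ and expanding in the dyadic sums, the key object becomes a bilinear form controlled by sums of the shape
\begin{equation*}
\sum_{Q} |Q|\, \langle |b-b_Q|\,|f|\rangle_Q \, \langle |b-b_Q|\,|g|\rangle_Q
\end{equation*}
(and variants with the $BMO$ factor on only one side). The new feature relative to the singular-integral case is the presence of these $|b-b_Q|$ factors, which by the John–Nirenberg inequality are controlled in the normalized Luxemburg norm $\|b-b_Q\|_{\exp L, Q} \lesssim \|b\|_{BMO}$. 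Then I would invoke the generalized Hölder inequality in Orlicz space: $\langle |b-b_Q|\,|f|\rangle_Q \lesssim \|b\|_{BMO}\,\|f\|_{\bar C, Q}$ where $\bar C$ is the Young function complementary to $e^{t}-1$ composed appropriately — concretely, $\bar C(t)\approx t\log(e+t)$. Iterating this with the further Hölder splitting against $u^{1/p}$ and $v^{-1/p}$ produces exactly the Young functions $A(t)=t^p\log(e+t)^{2p-1+\delta}$ and $B(t)=t^{p'}\log(e+t)^{2p'-1+\delta}$: the underlying singular integral contributes one power of $\log$ in each bump (the "$p-1+\delta$" of Theorem~\ref{thm:sharp-sio}), and each of the two commutator factors contributes an additional $\log^p$ on the $u$ side and $\log^{p'}$ on the $v$ side, giving the "double log bump."

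With the bumps identified, the remaining step is to run the testing/$B_p$-machinery: the hypothesis~\eqref{twowcond} together with the fact that $A\in B_p$ and $B\in B_{p'}$ (the logarithmic exponents exceed $p-1$ and $p'-1$ respectively, thanks to $\delta>0$) lets me sum the dyadic series via the standard Carleson-embedding / sparse-family argument from~\cite{CMP2,CMP3}, applied separately to the three bilinear pieces, and conclude $\|[b,S]f\|_{L^p(u)}\le c\|b\|_{BMO}\|f\|_{L^p(v)}$ uniformly in $S$. Summing the approximating operators yields~\eqref{2weightcom} for $T$. The sharpness claim ($\delta=0$ fails) is deferred to the examples section and is not part of this argument. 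I expect the main technical obstacle to be the bookkeeping in the Orlicz Hölder iteration — tracking precisely how the two $\exp L$ factors from $(b-b_Q)$ combine with the single-log bump of the underlying operator to produce the stated double-log Young functions, and verifying that the resulting $A,B$ genuinely satisfy the $B_p$, $B_{p'}$ conditions needed for the Carleson sum to converge.
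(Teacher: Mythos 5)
There are genuine gaps in the proposal, and the overall strategy you sketch does not match what is actually needed for a singular-integral kernel.

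First, the direct expansion you propose does not work for Haar-shift-type operators. You want to pair $[b,S]f$ against a dual function $g$ and reduce to sums like $\sum_Q |Q|\langle |b-b_Q||f|\rangle_Q\langle |b-b_Q||g|\rangle_Q$. But once you replace $\langle f,h_Q\rangle$ or $\langle g,g_Q\rangle$ by $|Q|\langle |f|\rangle_Q$ or $|Q|\langle |g|\rangle_Q$ you have thrown away the cancellation built into the Haar coefficients, and the resulting sum over \emph{all} dyadic cubes diverges (this is the same reason this kind of crude expansion fails already for the linear Haar shift without the commutator). A direct expansion of this form is available for the fractional integral $I_\alpha^d$, whose kernel is positive — and indeed that is how the paper proves Theorem~\ref{fraccomm} — but for the singular-integral case the paper is forced to route the argument through Lerner's local mean oscillation decomposition (Theorem~\ref{lerndec}): one first estimates $\omega_\lambda([b,T^d]f,Q)$ via the $L^2$ and weak $(1,1)$ bounds of the localized piece $\Tin$, obtaining the pointwise domination
$$|[b,T^d]f-m_{[b,T^d]f}(Q_N)| \lesssim \|b\|_{BMO}\Bigl(M^d_{L\log L}f + T^d_*f + \sum_{j,k}\|f\|_{L\log L,P_j^k}\chi_{Q_j^k} + \sum_{j,k}\inf_{Q_j^k}T^d_*f\,\chi_{Q_j^k}\Bigr),$$
and only then runs the dual-pairing/Carleson-type argument on these four pieces. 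Nothing in your write-up replaces this mechanism.

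Second, the bilinear form you display has two $|b-b_Q|$ factors, as if you were bounding a second-order commutator. In $\langle [b,T]f,g\rangle = \langle (b-c)Tf,g\rangle - \langle (b-c)f, T^*g\rangle$ each term carries a single oscillation factor, so the correct shapes are $\langle |b-b_Q||f|\rangle_Q\langle|g|\rangle_Q$ and $\langle|f|\rangle_Q\langle|b-b_Q||g|\rangle_Q$. Consequently, your explanation of the ``double log bump'' — that two commutator factors each contribute a $\log^p$ — is incorrect. The extra $\log^p$ arises because, after the single $\exp L$--$L\log L$ Hölder step, the relevant maximal operator is $M_{L\log L}$ rather than $M$; controlling it from $L^p(v)$ to $L^p(u)$ forces a bump on $v^{-1/p}$ (or $u^{1/p}$) large enough that the \emph{residual} Young function obtained after splitting off a factor of $L\log L$ still lies in $B_{p'}$ (or $B_p$). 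This is precisely the content of Lemma~\ref{Orliczmax} and Theorem~\ref{LlogL}.

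Third, the displayed condition ``$A\in B_p$ and $B\in B_{p'}$'' is false: for $A(t)=t^p\log(e+t)^{2p-1+\delta}$ the integral $\int^\infty A(t)t^{-p-1}\,dt$ diverges. What the proof requires is $\bar A\in B_{p'}$, $\bar B\in B_p$, and — crucially — the $L\log L$-associates $C\in B_{p'}$ and $D\in B_p$. Finally, you do not address the most technical part of the paper's argument, the estimate $\int M^d f\, M^d(u^{1/p}h)\,dx\lesssim\|f\|_{L^p(v)}\|h\|_{p'}$ (Lemma~\ref{doublemax}), which is needed to handle the $T^d_*$ contributions ($J_4$--$J_8$) and which does not follow from a generic sparse/Carleson embedding.
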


\begin{remark} The constant in inequality~\eqref{2weightcom} depends on
  the constant from the condition on the weights in~\eqref{twowcond},
  and in fact this dependence is linear.  This follows from a general scaling principle
  for two-weight inequalities first observed by Sawyer~\cite{S}.   Let
$$[u,v]_{p,A,B}=\sup_Q\|u^{1/p}\|_{A,Q} \|v^{-1/p}\|_{B,Q};$$
then by Theorem 1.3 we have
\begin{equation} \label{depend} \|[b,T]f\|_{L^p(u)}\leq \vp([u,v]_{p,A,B}) \|b\|_{BMO} \|f\|_{L^p(v)}\end{equation}
for some positive function $\vp$.  We now exploit the fact that a two
weight norm inequality has two degrees of freedom:  for any $s,t>0$, 
$$[su,tv]_{p,A,B}=s^{1/p}t^{-1/p}[u,v]_{p,A,B}.$$
Hence, if we substitute $(u,v)\mapsto (su,tv)$ in inequality
\eqref{depend},  we get
$$s^{1/p}\|[b,T]f\|_{L^p(u)}\leq \vp(s^{1/p}t^{-1/p}[u,v]_{p,A,B}) t^{1/p}\|b\|_{BMO} \|f\|_{L^p(v)}.$$
Let  $t=[u,v]_{p,A,B}^p$ and $s=1$; this gives us
$$\|[b,T]f\|_{L^p(u)}\leq \vp(1) [u,v]_{p,A,B}\|b\|_{BMO}
\|f\|_{L^p(v)}$$
which is the desired linear bound.  
\end{remark}

\bigskip

The higher degree of singularity of the commutators is reflected in
the power on the logarithms in the definition of $A$ and $B$: roughly
twice as large as for a singular integral.  (For this reason, we say
that the commutator requires ``double log bumps.'')  The phenomenon of
having the degree of singularity reflected in the power of the
logarithm was first conjectured in~\cite{CMP4} for the dyadic square
function and the vector-valued maximal operator, and confirmed
in~\cite{CMP3}.

Theorem~\ref{main} generalizes a number of known results for
commutators of singular integrals.  \'Alvarez {\em et al.}~\cite{ABKP}
showed that if $W$ is any class of weights that is stable---i.e., if
$(u,v)\in W$, there exists $r>1$ such that $(u^r,v^r)\in
W$---then given any pair $(u,v)\in W$, $[b,T]:L^p(v)\ra L^p(u)$.  The main example
of a class of stable weights consists of pairs $(u,v)$ that
satisfy~\eqref{twowcond} when $A(t)=t^{rp}$ and $B(t)=t^{rp'}$, $r>1$.
This class has the remarkable property that given any such pair
$(u,v)$, there exists $w\in A_p$ such that $c_1u\leq w \leq c_2v$.
See Neugebauer~\cite{neugebauer83} (also see~\cite{CMP4}).
In \cite{CP3}, Theorem~\ref{main} was proved with $A(t)=t^{rp}$,
$r>1$, $B(t)=t^{p'}\log(e+t)^{2p'-1+\delta}$; this was improved
in~\cite{cruz-uribe-fiorenza02} where it was proved with
$A(t)\approx t^p \exp([\log(t^p)]^r)$, $0<r<1$.  Finally,
in~\cite{CMP2} Theorem~\ref{main} was proved with
$A(t)=t^p\log(e+t)^{3p-1+\delta}$.

In~\cite{CMP2}, the condition \eqref{twowcond} was conjectured as
being sufficient for the commutator of any
Calder\'on-Zygmund singular integral operator, and Theorem~\ref{main}
is substantial evidence for this conjecture.   There were two
motivations for this conjecture.  First, it is a natural
generalization of an old (and still outstanding) conjecture of
Muckenhoupt and Wheeden.  They conjectured that given a pair of
weights $(u,v)$, a sufficient condition
for a singular integral to map $L^p(v)$ into $L^p(u)$ is that the
Hardy-Littlewood maximal operator satisfy
\begin{equation} \label{eqn:MW-conj}
M : L^p(v) \rightarrow L^p(u), \qquad M : L^{p'}(u^{1-p'})
\rightarrow L^{p'}(v^{1-p'}).
\end{equation}
The maximal
operator naturally associated with commutators is not $M$, but the
Orlicz maximal operator $M_{L\log L}$ (defined below); therefore, it seems natural to
conjecture that if we replace $M$ by $M_{L\log L}$ in
\eqref{eqn:MW-conj} then we get a sufficient condition for $[b,T] :
L^p(v)\rightarrow L^p(u)$.  The bump condition \eqref{twowcond} is
sufficient for $M_{L\log L}$ to satisfy these two estimates (this
follows from Theorem~\ref{Bpmax} below).  

A second motivation for this conjecture is that for the special class
of factored weights we could readily prove a result that was nearly
optimal.  We will consider this approach more carefully below.  

\subsection*{Fractional integrals}
We can prove both one and two-weight results for commutators of
fractional integrals.    In the one weight case the appropriate class
of weights is $A_{p,q}$, a generalization of the $A_p$ weights
introduced by Muckenhoupt and Wheeden~\cite{MW}.  More precisely,
given $\alpha$, $0<\alpha<n$, and $p$, $1<p<n/\alpha$, fix $q$ so that
$1/p-1/q=\alpha/n$.  Then $w\in A_{p,q}$ if 
\[ [w]_{A_{p,q}}=\sup_Q \left(\dashint_Q w(x)^q \, dx\right)
\left(\dashint_Q w(x)^{-p'} \, dx\right)^{q/p'}<\infty. \]
There is a close connection between $A_{p,q}$ weights and $A_p$
weights: it is immediate from the definition that $[w]_{A_{p,q}}=[w^q]_{A_{1+q/p'}}$.  

If $w\in A_{p,q}$, then $I_\alpha : L^p(w^p) \ra L^q(w^q)$, and 
in \cite{LMPT} the sharp constant in this inequality was given:
\begin{equation}\label{sharpfrac} 
\|I_\al\|_{L^p(w^p)\ra L^q(w^q)}\leq
c[w]_{A_{p,q}}^{(1-\frac{\al}{n})\max\left(1,\frac{p'}{q}\right)}.
\end{equation}
(A local version of this result was proved in \cite{ACS}.)
Our next theorem is the corresponding result for commutators.

\begin{theorem}\label{sharpcom} 
Given $\alpha$,  $0<\al<n$, and $p$, $1<p<n/\al$, fix $q$ such that
$1/p-1/q=\alpha/n$.  Then for any $b\in BMO$ and any $w\in A_{p,q}$,
$[b,I_\alpha] : L^p(w^p) \rightarrow L^q(w^q)$, and 
\begin{equation}\label{sharpbound}
\|[b,I_\al]\|_{L^p(w^p)\ra L^q(w^q)}\leq
c\|b\|_{BMO}[w]_{A_{p,q}}^{(2-\frac{\al}{n})\max\left(1,\frac{p'}{q}\right)}.
\end{equation}
Further, this result is sharp since
$(2-{\al}/{n})\max(1,{p'}/{q})$ cannot be replaced by a smaller power.
\end{theorem}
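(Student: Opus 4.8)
The plan is to reduce the weighted bound for the commutator $[b,I_\al]$ to a bound for an appropriate Orlicz maximal operator, exploiting the identification $[w]_{A_{p,q}}=[w^q]_{A_{1+q/p'}}$ and the known sharp bound \eqref{sharpfrac} for $I_\al$ itself. First I would recall the Cauchy integral / Coifman--Rochberg style trick for handling commutators: for a suitable exponent one writes, for $z$ in a neighborhood of $0$ in $\C$,
\[
[b,I_\al]f = \frac{d}{dz}\Big(e^{zb}\,I_\al(e^{-zb}f)\Big)\Big|_{z=0},
\]
so that by the Cauchy integral formula
\[
[b,I_\al]f = \frac{1}{2\pi i}\int_{|z|=\epsilon} \frac{e^{zb}\,I_\al(e^{-zb}f)}{z^2}\,dz,
\]
and hence $\|[b,I_\al]f\|_{L^q(w^q)}$ is controlled by $\epsilon^{-1}\sup_{|z|=\epsilon}\|e^{zb}I_\al(e^{-zb}f)\|_{L^q(w^q)}$. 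For each such $z$ with $|z|=\epsilon$, the weight $we^{\mathrm{Re}(zb)q/q}=we^{\mathrm{Re}(z)b}$ can be absorbed: one needs to check that if $w\in A_{p,q}$ with constant $K=[w]_{A_{p,q}}$, then for $|z|\le c/K$ (with $c$ depending only on $n,p,q$ and $\|b\|_{BMO}$) the perturbed weight $w_z := w e^{\mathrm{Re}(z)b}$ still lies in $A_{p,q}$ with $[w_z]_{A_{p,q}} \le C[w]_{A_{p,q}}$. This is the standard consequence of the John--Nirenberg inequality and the reverse-H\"older self-improvement of Muckenhoupt weights, stated quantitatively: $A_{p,q}$ weights satisfy a reverse H\"older inequality with exponent $1+c/[w^q]_{A_{1+q/p'}}$, which translates into stability of the $A_{p,q}$ constant under multiplication by $e^{\mathrm{Re}(z)b}$ precisely in the range $|z|\lesssim 1/[w]_{A_{p,q}}$.

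The key computation then runs as follows. Choosing $\epsilon \sim 1/\big(\|b\|_{BMO}[w]_{A_{p,q}}\big)$, we get
\[
\|[b,I_\al]f\|_{L^q(w^q)} \le C\,\|b\|_{BMO}\,[w]_{A_{p,q}} \cdot \sup_{|z|=\epsilon}\big\|I_\al(e^{-zb}f)\big\|_{L^q(w_z^q)},
\]
where I have also bounded the harmless factor $e^{\mathrm{Re}(z)b}$ inside the $L^q(w^q)$ norm against $e^{-\mathrm{Re}(z)b}$ outside, which is exactly how $w$ becomes $w_z$. Applying the sharp one-weight bound \eqref{sharpfrac} to $I_\al$ with the weight $w_z$ gives
\[
\big\|I_\al(e^{-zb}f)\big\|_{L^q(w_z^q)} \le c\,[w_z]_{A_{p,q}}^{(1-\frac{\al}{n})\max(1,p'/q)}\,\big\|e^{-zb}f\big\|_{L^p(w_z^p)}
\le c\,[w]_{A_{p,q}}^{(1-\frac{\al}{n})\max(1,p'/q)}\,\|f\|_{L^p(w^p)},
\]
using the stability bound $[w_z]_{A_{p,q}}\le C[w]_{A_{p,q}}$ and then undoing the $e^{-zb}$ factor against the weight $w_z^p$ to recover $\|f\|_{L^p(w^p)}$ (again a John--Nirenberg estimate). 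Multiplying the two displays, the power of $[w]_{A_{p,q}}$ is $1 + (1-\tfrac{\al}{n})\max(1,p'/q)$; since $\max(1,p'/q)\ge 1$ we can bound $1 \le \max(1,p'/q)$ and combine to get the exponent $(2-\tfrac{\al}{n})\max(1,p'/q)$, which is \eqref{sharpbound}.

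The main obstacle I anticipate is the quantitative weight-perturbation step: verifying, with explicit constants, that $w\in A_{p,q}$ implies $we^{\mathrm{Re}(z)b}\in A_{p,q}$ with comparable constant throughout the disk $|z|\lesssim 1/[w]_{A_{p,q}}$, and — crucially — that the dependence of the radius $\epsilon$ on $[w]_{A_{p,q}}$ is exactly $1/[w]_{A_{p,q}}$ rather than some worse power; getting any extra power here would spoil the sharp exponent in \eqref{sharpbound}. This hinges on the sharp form of the reverse H\"older inequality for $A_{p,q}$ weights, i.e.\ that the self-improvement exponent degrades only linearly in $[w]_{A_{p,q}}$, together with the John--Nirenberg inequality in the sharp form $\|e^{t b}\|_{\exp,Q}\lesssim e^{c t \|b\|_{BMO}}$. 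A secondary, more routine point is justifying the Cauchy-integral representation analytically (that $z\mapsto e^{zb}I_\al(e^{-zb}f)$ is a well-defined analytic $L^q(w^q)$-valued function in a fixed neighborhood of $0$, with uniform bounds), which follows from the same exponential integrability estimates. Finally, the sharpness assertion — that $(2-\tfrac{\al}{n})\max(1,p'/q)$ cannot be lowered — will be handled separately in the examples section; here one expects a power-weight example $w(x)=|x|^{\beta}$ tuned so that $[w]_{A_{p,q}}\to\infty$ at a controlled rate, with $[b,I_\al]$ tested against $f$ related to the logarithm $b(x)=\log|x|$, mirroring the Chung--P\'erez--Pereyra construction for singular integrals.
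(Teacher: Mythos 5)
Your plan is essentially the one the paper follows (Coifman--Rochberg--Weiss Cauchy-integral trick together with the sharp fractional-integral bound \eqref{sharpfrac}), but there is a genuine quantitative gap in the weight-perturbation step, and it is precisely the step you flag as the main obstacle. You claim that $[w_z]_{A_{p,q}}\le C[w]_{A_{p,q}}$ holds for $|z|\lesssim 1/[w]_{A_{p,q}}$. This is only correct when $p'/q\le 1$. In general, the perturbation requires a reverse-H\"older improvement for \emph{both} $w^q$ and $w^{-p'}$: one has $[w^q]_{A_{1+q/p'}}=[w]_{A_{p,q}}$ but $[w^{-p'}]_{A_{1+p'/q}}=[w]_{A_{p,q}}^{p'/q}$, so the H\"older conjugate exponents one pays for in the John--Nirenberg step scale like $[w]_{A_{p,q}}$ and $[w]_{A_{p,q}}^{p'/q}$ respectively. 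The admissible radius is therefore $\epsilon\sim 1/\bigl([w]_{A_{p,q}}^{\max(1,p'/q)}\|b\|_{BMO}\bigr)$, not $1/\bigl([w]_{A_{p,q}}\|b\|_{BMO}\bigr)$. With the corrected $\epsilon$ the exponent comes out as $\max(1,p'/q)+(1-\tfrac{\alpha}{n})\max(1,p'/q)=(2-\tfrac{\alpha}{n})\max(1,p'/q)$ directly; your device of bounding $1\le\max(1,p'/q)$ at the end lands on the right number by accident, but if your asserted radius were correct your argument would actually yield the strictly smaller exponent $1+(1-\tfrac{\alpha}{n})\max(1,p'/q)$ when $p'/q>1$, contradicting the sharpness assertion of the very theorem you are proving.

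The paper sidesteps this asymmetry entirely: it first invokes the sharp off-diagonal extrapolation theorem from \cite{LMPT} to reduce to the single case $2/p=1+\alpha/n$, i.e.\ $q=p'$. In that symmetric case $w^{p'}\in A_2$ with $[w]_{A_{p,p'}}=[w^{p'}]_{A_2}=[w^{-p'}]_{A_2}$, so there is only one reverse-H\"older constant to track, the radius is cleanly $\epsilon\approx([w]_{A_{p,p'}}\|b\|_{BMO})^{-1}$, and the full exponent $(2-\tfrac{\alpha}{n})\max(1,p'/q)$ is then recovered automatically from the extrapolation. If you want to avoid extrapolation and argue directly for general $(p,q)$, you must carry out the two-sided reverse-H\"older analysis above with the exponent $p'/q$ tracked explicitly; as written, your single-parameter claim about the perturbation radius is false when $p'/q>1$.
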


The restriction $1/p-1/q=\al/n$ in the one-weight case follows from
homogeneity: see \cite[Section 5.6]{CMP4}.  However, in the two-weight
case, since the weights $u$ and $v$ may have different homogeneity,
there is no corresponding restriction.  P\'erez \cite{perez94} proved
that if $1<p\leq q<\infty$, and if the pair $(u,v)$ satisfies
\[ \sup_Q|Q|^{\frac{\al}{n}+\frac1q-\frac1p}\|u^{1/q}\|_{A,Q}
\|v^{-1/p}\|_{B,Q}<\infty, \]
where $A(t)=t^q\log(e+t)^{q-1+\delta}$ and
$B(t)=t^{p'}\log(e+t)^{p'-1+\delta}$, then $I_\alpha :
L^p(v)\rightarrow L^q(u)$.    Given this estimate, our next result is
the natural analog of Theorem~\ref{main} for commutators of fractional integrals.

\begin{theorem} \label{fraccomm} Given $\alpha$, $0<\al<n$, and
  $p,\,q$, $1<p\leq q<\infty$, suppose the pair of weights $(u,v)$ 
satisfies
\begin{equation}\label{twowcondfrac} 
\sup_Q|Q|^{\frac{\al}{n}+\frac1q-\frac1p}\|u^{1/q}\|_{A_q,Q}
\|v^{-1/p}\|_{B,Q}<\infty,
\end{equation}
where $A_q(t)=t^q\log(e+t)^{2q-1+\delta}$ and
$B(t)=t^{p'}\log(e+t)^{2p'-1+\delta}$.  Then for all  $b\in BMO$,
\[ \|[b,I_\al]f\|_{L^q(u)}\leq 
c\|b\|_{BMO} \|f\|_{L^p(v)}. \]
Further, this inequality is sharp since it does not hold in general if we take $\delta =0$ in the definition of $A_q$.
\end{theorem}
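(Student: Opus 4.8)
The plan is to argue by discretization. First, by linearity of $[b,I_\al]$ in $b$ we may normalize $\|b\|_{BMO}=1$, recovering the general statement---and the asserted linear dependence on $\|b\|_{BMO}$---by applying the result to $b/\|b\|_{BMO}$. Starting from the identity
\[
[b,I_\al]f(x)=\int_{\R^n}\bigl(b(x)-b(y)\bigr)\frac{f(y)}{|x-y|^{n-\al}}\,dy
\]
(valid for, say, $f$ bounded with compact support) and the standard dyadic discretization of the fractional kernel, together with the splitting $b(x)-b(y)=\bigl(b(x)-\lan b\ran_Q\bigr)-\bigl(b(y)-\lan b\ran_Q\bigr)$ on each cube $Q$ of the decomposition, I would obtain a pointwise bound, uniform over a finite family of shifted dyadic grids $\D$,
\[
|[b,I_\al]f(x)|\lesssim\sum_{\D}\bigl(\Sigma_1^{\D}f(x)+\Sigma_2^{\D}f(x)\bigr),
\]
where, with $\lan\cdot\ran_Q$ the average over $Q$,
\[
\Sigma_1^{\D}f=\sum_{Q\in\D}|Q|^{\al/n}\,|b-\lan b\ran_Q|\,\lan|f|\ran_Q\,\mathbf{1}_Q,\qquad
\Sigma_2^{\D}f=\sum_{Q\in\D}|Q|^{\al/n}\,\lan|b-\lan b\ran_Q|\,|f|\ran_Q\,\mathbf{1}_Q.
\]
Since the supremum in \eqref{twowcondfrac} is over all cubes it controls in particular the cubes of each grid $\D$, so it suffices to bound $\Sigma_1:=\Sigma_1^{\D}$ and $\Sigma_2:=\Sigma_2^{\D}$ for a single grid.

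The device I would use for both terms is the John--Nirenberg inequality in the form $\|b-\lan b\ran_Q\|_{\exp L,Q}\lesssim\|b\|_{BMO}=1$ together with the generalized H\"older inequality $\lan|\varphi\psi|\ran_Q\lesssim\|\varphi\|_{\exp L,Q}\|\psi\|_{L\log L,Q}$. For $\Sigma_2$ this gives $\lan|b-\lan b\ran_Q|\,|f|\ran_Q\lesssim\|f\|_{L\log L,Q}$, hence $\Sigma_2 f\lesssim\sum_Q|Q|^{\al/n}\|f\|_{L\log L,Q}\mathbf{1}_Q$, a dyadic fractional integral whose $L^1$ averages of $f$ have been upgraded to $L\log L$ averages; I would then invoke a two-weight bound for this operator, which maps $L^p(v)$ into $L^q(u)$ whenever $(u,v)$ satisfies \eqref{twowcondfrac}, the $L\log L$ average of $f$ being what forces the bump on $v^{-1/p}$ to carry one extra logarithm relative to P\'erez's single-log bump for $I_\al$. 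For $\Sigma_1$ I would dualize: testing against $g$ with $\|g\|_{L^{q'}(u^{1-q'})}\le1$ and peeling off $|b-\lan b\ran_Q|$ the same way leads to
\[
\Bigl|\int\Sigma_1 f\cdot g\Bigr|\lesssim\int_{\R^n}|f|\,\cdot\,I_\al^{\D}\bigl(M_{L\log L}g\bigr)\,dx,
\]
after which H\"older's inequality and a two-weight bound for $I_\al^{\D}\circ M_{L\log L}$---now it is $M_{L\log L}$ acting on $g$ that forces the extra logarithm, this time on the bump for $u^{1/q}$---yield $\|\Sigma_1 f\|_{L^q(u)}\lesssim\|f\|_{L^p(v)}$. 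Summing the two estimates and undoing the normalization gives the theorem.

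The hard part is the step invoked twice above: the two-weight inequalities for the auxiliary operators $\sum_Q|Q|^{\al/n}\|f\|_{L\log L,Q}\mathbf{1}_Q$ and $I_\al^{\D}\circ M_{L\log L}$ under the double-log bump condition. I would establish these either by re-running P\'erez's dyadic (Sawyer-type) argument for $I_\al$ with the relevant Orlicz generalized H\"older inequality replacing ordinary H\"older---the net effect being exactly the gain of one logarithm in the pertinent bump---or by composing P\'erez's theorem for $I_\al$ with the two-weight boundedness of the Orlicz maximal operator $M_{L\log L}$, which is governed by the $B_p$-type condition of Theorem~\ref{Bpmax}. The dependence on $\|b\|_{BMO}$ comes out linear because $b$ enters only through the single application of John--Nirenberg and $\exp L$--$L\log L$ H\"older. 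I note that the conjugation argument of Coifman, Rochberg and Weiss---which is available for the one-weight result of Theorem~\ref{sharpcom} because Muckenhoupt classes self-improve---does not transfer to the bump setting, which is why I would take the dyadic route. Finally, the sharpness assertion (failure when $\delta=0$ in $A_q$) is separate from this argument and would be shown by constructing a suitable pair of power weights.
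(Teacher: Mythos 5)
Your proposal is essentially the same argument the paper gives: after discretizing and splitting $b(x)-b(y)=(b(x)-a_b(Q))-(b(y)-a_b(Q))$, both you and the paper apply John--Nirenberg together with the $\exp L$--$L\log L$ generalized H\"older inequality to convert the $BMO$ factor into an $L\log L$ average, and then close the estimate by a P\'erez-style Calder\'on--Zygmund/dyadic argument combining the bump condition \eqref{twowcondfrac} with $B_p$-boundedness of the relevant Orlicz maximal operators. The only cosmetic difference is that the paper dualizes at the outset (pairing $[b,I_\al^d]f$ directly against $hu^{1/q}$ and estimating the resulting sums $K_1,K_2$ in place), whereas you first dominate pointwise by $\Sigma_1,\Sigma_2$ and dualize only for $\Sigma_1$; the auxiliary two-weight bounds you invoke are exactly what the paper proves inline by ``re-running P\'erez's dyadic argument'' rather than by composing two separate two-weight theorems.
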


As this paper was being completed, we discovered that the sufficiency
of \eqref{twowcondfrac} in Theorem~\ref{fraccomm} was proved earlier by
Li~\cite{li2006}, who adapted the proof of the two-weight norm
inequalities for $I_\alpha$.  Here we give a somewhat more elementary
proof along with an example to show that this condition is sharp.

\medskip

Though not directly connected with our results on commutators, we
digress to give a sharp constant result for the weighted Sobolev
inequality.   In \cite{LMPT} the authors used their results for fractional
integrals to show that for $p,\,q$ such that  $1\leq p<n$ and
$1/p-1/q=1/n$, 
\begin{equation}\label{sobolev}
\|f\|_{L^q(w^q)}\leq c [w]_{A_{p,q}}^{1/n'} \|\nabla f \|_{L^p(w^p)}.
\end{equation} 
Here we show that this inequality is the best possible.

\begin{theorem} \label{sharpsobolev} 
Suppose $n>1$, $1\leq p<n$ and $1/p-1/q=1/n$, then inequality
\eqref{sobolev} is sharp since the exponent $1/n'$ cannot be replaced
by any smaller power.
\end{theorem}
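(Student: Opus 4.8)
The plan is to prove sharpness of \eqref{sobolev} by constructing a family of radial power weights for which the exponent $1/n'$ is attained, mirroring the extremal examples used for $I_\alpha$ itself in \cite{LMPT}. First I would reduce the Sobolev inequality to the fractional integral estimate: since $|f(x)| \leq c\, I_1(|\nabla f|)(x)$ pointwise (the classical representation formula), it suffices to exhibit weights $w$ and test functions $f$ showing that the exponent in \eqref{sobolev} cannot be lowered — equivalently, I would show that if \eqref{sobolev} held with exponent $1/n' - \epsilon$ for some $\epsilon > 0$, it would force an improvement in the sharp bound \eqref{sharpfrac} for $I_1$ (with $p,q$ conjugate via $1/p - 1/q = 1/n$), contradicting the sharpness asserted there. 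The cleanest route is probably direct: take $w(x) = |x|^{-\beta}$ restricted near the origin (or smoothly cut off), chosen so that $[w]_{A_{p,q}} \to \infty$ as $\beta$ approaches the critical exponent $n/p'$ from below, with $[w]_{A_{p,q}} \approx (n/p' - \beta)^{-1}$, and test against $f(x) = |x|^{-\gamma}\varphi(x)$ with a fixed bump $\varphi$, where $\gamma$ is tuned to the weight.

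The key computational steps, in order, are: (1) compute $[w]_{A_{p,q}}$ for $w(x)=|x|^{-\beta}$, obtaining a constant that blows up like $(n/p'-\beta)^{-1}$ as $\beta \uparrow n/p'$ — this is a standard power-weight computation on balls centered at the origin, where the $A_{p,q}$ supremum is governed by these balls; (2) choose the test function $f_\beta$ so that $\|\nabla f_\beta\|_{L^p(w^p)}$ is finite (and comparable to a constant, or to a controlled power of $(n/p'-\beta)^{-1}$) while $\|f_\beta\|_{L^q(w^q)}$ is as large as possible; (3) compare the two sides and extract the forced power of $(n/p'-\beta)^{-1}$. If \eqref{sobolev} held with exponent $\sigma < 1/n'$, then letting $\beta \uparrow n/p'$ would produce a ratio $\|f_\beta\|_{L^q(w^q)} / \|\nabla f_\beta\|_{L^p(w^p)}$ growing faster than $[w]_{A_{p,q}}^\sigma$, a contradiction. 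Throughout I would use the identity $[w]_{A_{p,q}} = [w^q]_{A_{1+q/p'}}$ noted in the excerpt to transfer the power-weight bookkeeping into the more familiar $A_r$ setting if that is more convenient.

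The main obstacle I anticipate is the careful bookkeeping in step (2): the naive choice $f = $ (a pure power) has a gradient that is too singular at the origin to lie in $L^p(w^p)$, so the test function must be the right truncation or regularization of a power of $|x|$, and one must verify that the truncation does not destroy the sharpness of the growth rate — i.e., that $\|\nabla f_\beta\|_{L^p(w^p)}$ does not itself blow up faster than the target power of $(n/p'-\beta)^{-1}$. This is precisely the kind of delicate extremizer analysis carried out in \cite{LMPT} for $I_\alpha$, and the natural strategy is to borrow their extremal pair essentially verbatim: one knows there exist weights $w_\beta$ and functions $g_\beta \geq 0$ with $\|I_1 g_\beta\|_{L^q(w_\beta^q)} \gtrsim [w_\beta]_{A_{p,q}}^{1/n'} \|g_\beta\|_{L^p(w_\beta^p)}$, and by solving the (formally overdetermined, but in the radial category tractable) equation $\nabla f_\beta \sim g_\beta \cdot (\text{unit radial vector})$ — or more robustly, invoking that $I_1$ maps nonnegative radial functions to functions controlled by Sobolev functions — one transfers the extremal example to the gradient setting. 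A secondary, more minor, issue is the hypothesis $n>1$: for $n=1$ the exponent $1/n'=0$ and the statement is vacuous or degenerate, so one should simply note that the construction (which relies on the radial structure in $\R^n$ with $n\geq 2$) is what forces this restriction.
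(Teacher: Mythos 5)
Your direct strategy — radial power weight $w_\delta$, tune the test function $f_\delta$, compare both sides — is exactly the paper's strategy, and you even correctly diagnose the central obstacle: a sharp power $|x|^{-\gamma}$ (truncated or bumped off) is not an admissible test function for \eqref{sobolev} because it is not regular enough at the origin, and naive regularization risks spoiling the blow-up rate. However, you leave this obstacle unresolved: you never produce a test function that actually survives the smoothness requirement while retaining the sharp growth. This is precisely what the paper supplies with the family $f_\delta(x) = \exp(-|x|^\delta)$, which is smooth away from the origin, decays fast at infinity, and has $|\nabla f_\delta(x)| = \delta|x|^{\delta-1}e^{-|x|^\delta}$ — still weakly singular at the origin but in $W^{1,p}(w_\delta^p)$ for the weight $w_\delta(x) = |x|^{(\delta-n)/q}$. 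With these one computes $\|f_\delta\|_{L^q(w_\delta^q)} \approx \delta^{-1/q}$, $\|\nabla f_\delta\|_{L^p(w_\delta^p)} \approx \delta^{1-1/p}$, and $[w_\delta]_{A_{p,q}} \approx \delta^{-1}$; since $1/p - 1/q = 1/n$, the ratio forces the exponent to be at least $1 - 1/n = 1/n'$. Producing such a family is the whole content of the proof, so its absence is a genuine gap.

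Your fallback argument — that an improved Sobolev exponent would improve the sharp bound \eqref{sharpfrac} for $I_1$, contradicting [LMPT] — has the implication running the wrong way. The pointwise bound from the representation formula is $|f(x)| \leq c\,I_1(|\nabla f|)(x)$, so a bound for $I_1$ transfers to a bound for the Sobolev inequality, not conversely. Equivalently, sharpness of \eqref{sobolev} is a \emph{stronger} statement than sharpness of \eqref{sharpfrac} in this regime, so one cannot derive it by citing the latter. Similarly, the idea of inverting the extremal pair $(w_\beta, g_\beta)$ for $I_1$ by solving $\nabla f_\beta \sim g_\beta \hat{r}$ would need $|f_\beta| \gtrsim I_1 g_\beta$, which contradicts the representation formula rather than following from it; radial monotonicity alone does not rescue this without a separate argument. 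Finally, a small correction: for $w(x)=|x|^{-\beta}$ the constant $[w]_{A_{p,q}}$ blows up as $\beta \uparrow n/q$ (the threshold for local integrability of $w^q$), not $n/p'$; the paper's parametrization $w_\delta(x)=|x|^{(\delta-n)/q}$, $\delta\downarrow 0$, is exactly this limit.
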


To show that \eqref{sobolev} is sharp we cannot use the standard
examples of the form  $f(x)=|x|^a\chi_B(x)$ where $B$ is a unit ball or
unit cube, since \eqref{sobolev} requires $f$ to be smooth.  We instead
introduce a new family which is smooth and decays exponentially at
infinity.  

\subsection*{Weak type inequalities}
We begin with our two conjectures for weak type inequalities for
commutators.

\begin{conjecture} \label{conj:weak-sio}
Given a Calder\'on-Zygmund singular integral  operator $T$, if for
some $p$, $1<p<\infty$, the pair of weights $(u,v)$ satisfies
\begin{equation} \label{eqn:weak-sio1}
\sup_Q \|u^{1/p}\|_{A,Q}\|v^{-1/p}\|_{B,Q} < \infty, 
\end{equation}
where $A(t)=t^p\log(e+t)^{2p-1+\delta}$, $\delta>0$,
$B(t)=t^{p'}\log(e+t)^{p'}$, then for any $b\in BMO$,
\begin{equation} \label{eqn:weak-sio2}
 [b,T] : L^p(v) \rightarrow L^{p,\infty}(u).
\end{equation}
\end{conjecture}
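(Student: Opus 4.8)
The plan is to follow the architecture of the proof of Theorem~\ref{main}, modifying it at the two points where weak-type estimates behave differently from strong-type ones. First I would invoke the approximation of $T$ by dyadic singular integrals to reduce everything to a single generalized Haar shift $S$, with constants independent of the shift, and then use a pointwise sparse domination of the commutator: for each dyadic lattice there is a sparse family $\mathcal{S}$ such that $|[b,S]f(x)|\lesssim\|b\|_{BMO}\big(\mathcal{A}_{\mathcal{S}}f(x)+\mathcal{A}^{\log}_{\mathcal{S}}f(x)\big)$ almost everywhere, where $\mathcal{A}_{\mathcal{S}}f=\sum_{Q\in\mathcal{S}}\langle|f|\rangle_Q\chi_Q$ and $\mathcal{A}^{\log}_{\mathcal{S}}f=\sum_{Q\in\mathcal{S}}\|f\|_{L\log L,Q}\chi_Q$. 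Such a domination can be obtained either from the now-standard median-oscillation machinery or, in the spirit of this paper, by a direct Calder\'on--Zygmund argument of P\'erez type, in which the identity $[b,S](f\chi_Q)=(b-b_Q)S(f\chi_Q)-S\big((b-b_Q)f\chi_Q\big)$ is run on stopping cubes: the first term is summed against the kernel decay of $S$ using the John--Nirenberg inequality, and the second, which has no cancellation, produces the factor $\|b\|_{BMO}\|f\|_{L\log L,Q}$ via the generalized H\"older inequality in the $L\log L$--$\exp L$ duality.

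The conjecture is then reduced to two weak-type $(p,p)$ estimates. For $\mathcal{A}_{\mathcal{S}}$ one needs $\mathcal{A}_{\mathcal{S}}:L^p(v)\to L^{p,\infty}(u)$; this is classical and holds already under a bump on the $v$-side alone of $B_{p'}$ strength, so it is comfortably implied by~\eqref{eqn:weak-sio1}. The real content is the corresponding bound for the Orlicz sparse operator $\mathcal{A}^{\log}_{\mathcal{S}}$, which is morally $M_{L\log L}$ composed with a sparse average --- an $M_{L(\log L)^2}$-type object --- so that the iteration of the $L\log L$ average along the stopping tree is what the double-log bump $A(t)=t^p\log(e+t)^{2p-1+\delta}$ on $u$ must absorb. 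The heuristic for the hypotheses is that, unlike in Theorem~\ref{main}, the weak-type argument carries no duality step, so no bump is needed on $v$ beyond the single-log function $B(t)=t^{p'}\log(e+t)^{p'}$ --- exactly the phenomenon already seen when passing from strong to weak type for $T$ itself and for $M$. I would prove the $\mathcal{A}^{\log}_{\mathcal{S}}$ estimate by a Sawyer-type stopping argument over the maximal cubes of $\{\mathcal{A}^{\log}_{\mathcal{S}}f>\lambda\}$, using $(A,B)$ to run the generalized H\"older inequality on each cube and then summing $u(Q)$ over the essentially disjoint family.

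The hard part is this last estimate. One must show that the iterated $L\log L$ averaging along the sparse tree costs precisely one extra power of the logarithm --- so that the double-log bump on $u$ is enough while the $v$-side bump can be taken as weak as $B(t)=t^{p'}\log(e+t)^{p'}$ --- and that this is optimal; matching the logarithmic powers on the two sides in the weak-type regime, rather than in the strong-type regime of Theorem~\ref{main}, is the delicate point and the reason the statement is only a conjecture. A secondary obstruction is the usual one: the reduction to Haar shifts proves the result only for singular integrals approximable by dyadic operators, and reaching an arbitrary Calder\'on--Zygmund operator would require a pointwise sparse domination of its commutator, the same gap already present in Theorem~\ref{main}.
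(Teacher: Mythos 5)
Note first that this statement is Conjecture~\ref{conj:weak-sio}, which the paper itself leaves open: the authors say explicitly that they cannot prove it and offer only partial evidence. There is therefore no in-paper proof to compare your attempt against, and you are correct not to claim one; what follows assesses your proposed route and contrasts it with the partial progress the paper actually makes.

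Your strategy is a sensible transposition of the strong-type argument for Theorem~\ref{main}: pass to dyadic shifts, use Lerner's local mean oscillation decomposition (Theorem~\ref{lerndec} together with Lemma~\ref{localos}) to dominate $|[b,T^d]f - m_{[b,T^d]f}(Q_N)|$ by $\|b\|_{BMO}$ times an ordinary sparse operator $\mathcal{A}_{\mathcal{S}}f=\sum_{Q}\langle|f|\rangle_Q\chi_Q$ plus an Orlicz one $\mathcal{A}^{\log}_{\mathcal{S}}f=\sum_{Q}\|f\|_{L\log L,Q}\chi_Q$, and then prove two weak $(p,p)$ estimates. You correctly observe that it is the median/rearrangement step that yields a clean $\|b\|_{BMO}$ prefactor rather than a residual $|b(x)-a_b(Q)|$, and you correctly identify the decisive open step: the weak $(p,p)$ bound for $\mathcal{A}^{\log}_{\mathcal{S}}$ under the asymmetric bumps $A(t)=t^p\log(e+t)^{2p-1+\delta}$ on $u$ and $B(t)=t^{p'}\log(e+t)^{p'}$ on $v$. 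No one has carried out the required level-set argument with these hypotheses, and your proposal does not supply it; your honest flagging of this is accurate. Two of your side remarks are shaky, though. The weak $(p,p)$ estimate for $T$ and $\mathcal{A}_{\mathcal{S}}$ in \cite{CP1} places the log-bump on the $u$-side, with no log-bump on $v^{-1/p}$ at all, so the reason this part is ``comfortably implied'' is that your $u$-bump is stronger, not that there is a $v$-side bump of the right strength. And the heuristic that $\mathcal{A}^{\log}_{\mathcal{S}}$ is ``morally $M_{L(\log L)^2}$'' is misleading: the sparse sum is not bounded on $L^\infty$ and does not reduce to a single Orlicz maximal operator. Indeed the strong-type treatment (Lemma~\ref{doublemax} and the $J_3$, $J_7$ estimates in Section~\ref{section:proof-main-sio}) shows it costs double-log bumps on \emph{both} sides; whether the weak-type setting can shed one full log from the $v$-side is exactly the unresolved question.

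For comparison, the paper's own positive evidence (Theorem~\ref{thm:factored-sio}) uses an entirely different engine that you did not consider: for factored pairs $(\tilde u,\tilde v)=(w_1(M_\Psi w_2)^{1-p}, (M_\Phi w_1)w_2^{1-p})$ it combines P\'erez's Fefferman--Stein type inequality $M_\delta^\#([b,T]f)\leq c\|b\|_{BMO}\big(M_\epsilon(Tf)+M_{L\log L}f\big)$ with a two-weight weak-type comparison lemma for $M^\#_\delta$ (Lemma~\ref{lemma:sharp-ineq}), iterated through the explicit factorization of the weights. That route avoids any sparse domination, but it costs an extra logarithm --- yielding $A(t)=t^p\log(e+t)^{2p+\delta}$ rather than the conjectured $2p-1+\delta$ --- and it relies essentially on the factored structure, so it says nothing about general pairs. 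A sparse proof along the lines you sketch would, if completed, be cleaner and hit the sharp power; the missing weak-type estimate for $\mathcal{A}^{\log}_{\mathcal{S}}$ is precisely why the statement is still a conjecture.
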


\begin{conjecture} \label{conj:weak-frac}
Given $\alpha$, $0<\alpha<n$,  if for
some $p$, $1<p<\infty$, the pair of weights $(u,v)$ satisfies
\begin{equation} \label{eqn:weak-frac1}
\sup_Q |Q|^{\alpha/n}\|u^{1/p}\|_{A,Q}\|v^{-1/p}\|_{B,Q} < \infty, 
\end{equation}
where $A(t)=t^p\log(e+t)^{2p-1+\delta}$, $\delta>0$,
$B(t)=t^{p'}\log(e+t)^{p'}$, then for any $b\in BMO$,
\begin{equation} \label{eqn:weak-frac2}
  [b,I_\alpha] : L^p(v) \rightarrow L^{p,\infty}(u).
\end{equation}
\end{conjecture}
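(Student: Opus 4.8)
Since this is stated as a conjecture, what follows is the strategy I would pursue toward it, together with the point at which current techniques stall. The plan is to follow the proof of Theorem~\ref{fraccomm} as far as the reduction to a positive dyadic model, and then to use the fact that we now want only a \emph{weak}-type bound: this should be precisely what lets the bump on $v$ be shrunk from the double-log bump $t^{p'}\log(e+t)^{2p'-1+\delta}$ to the smaller bump $B(t)=t^{p'}\log(e+t)^{p'}$, the remaining work being pushed onto the double-log bump $A(t)=t^{p}\log(e+t)^{2p-1+\delta}$ on $u$.

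\emph{Reduction to a sparse Orlicz fractional operator.} Using the adjacent dyadic grids one has $I_\al f\leq c\sum_{k=1}^{N}I_\al^{\D_k}|f|$, where $I_\al^{\D}g=\sum_{Q\in\D}|Q|^{\al/n}\big(\avgQ g\big)\chi_Q$, and hence $|[b,I_\al]f|\leq c\sum_k|[b,I_\al^{\D_k}]f|$; since $N$ is finite and $L^{p,\infty}(u)$ is quasi-normed, it suffices to bound a single dyadic commutator $[b,I_\al^{\D}]$ uniformly in $\D$. For $x\in Q$ one has $b(x)\avgQ f-\avgQ(bf)=\avgQ(b(x)-b)f$, so splitting $b(x)-b(y)=(b(x)-b_Q)+(b_Q-b(y))$ writes $[b,I_\al^{\D}]f$ as the sum of two pieces indexed by the cubes $Q\ni x$. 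A Calder\'on--Zygmund stopping-time argument in the spirit of P\'erez~\cite{P1} (equivalently, a local mean oscillation decomposition) replaces these sums, up to acceptable errors, by sums over a sparse family $\Si\subset\D$; for the piece with $b_Q-b(y)$ one uses the generalized H\"older inequality $\avgQ|b-b_Q|\,|f|\leq c\,\|b-b_Q\|_{\exp L,Q}\|f\|_{L\log L,Q}\leq c\,\|b\|_{BMO}\,\|f\|_{L\log L,Q}$, and for the piece with $b(x)-b_Q$ one runs a further stopping time on $b$, which by John--Nirenberg costs exactly one more power of the logarithm. One is thereby reduced to proving, uniformly over sparse $\Si$ and with $\Phi(t)=t\log(e+t)$,
\[
\Big\|\,\sum_{Q\in\Si}|Q|^{\al/n}\,\|f\|_{\Phi,Q}\,\chi_Q\,\Big\|_{L^{p,\infty}(u)}\ \leq\ c\,\|f\|_{L^p(v)}
\]
under the bump hypothesis~\eqref{eqn:weak-frac1}.

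\emph{The weak-type estimate, and the main obstacle.} Dualizing the weak-type norm, the display above amounts to bounding $\sum_{Q\in\Si}|Q|^{\al/n}\|f\|_{\Phi,Q}\,u(E\cap Q)$ by $c\,u(E)^{1/p'}\|f\|_{L^p(v)}$ for every $E$ with $0<u(E)<\infty$. The generalized H\"older inequality applied to the factorization $L\log L=L^{p}\cdot B$ gives $\|f\|_{\Phi,Q}\leq c\,\big(\avgQ|f|^{p}v\big)^{1/p}\,\|v^{-1/p}\|_{B,Q}$ with \emph{exactly} the Young function $B(t)=t^{p'}\log(e+t)^{p'}$ of~\eqref{eqn:weak-frac1}---this is the source of that bump on $v$---and invoking~\eqref{eqn:weak-frac1} to trade $|Q|^{\al/n}\|v^{-1/p}\|_{B,Q}$ for $\|u^{1/p}\|_{A,Q}^{-1}$ reduces matters to estimating
\[
\sum_{Q\in\Si}\frac{\big(\avgQ|f|^{p}v\big)^{1/p}}{\|u^{1/p}\|_{A,Q}}\,u(E\cap Q)
\]
by $c\,u(E)^{1/p'}\|f\|_{L^p(v)}$, using the sparseness of $\Si$ and the $2p-1+\delta$ powers of logarithm carried by $A$. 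This last step is the crux, and is exactly why the statement is only a conjecture: in the corresponding \emph{strong}-type estimate one simply bounds $u(E\cap Q)$ by $u(Q)$ and is then forced to put a double-log bump on $v$ as well, so any proof of the weak-type version must genuinely exploit the set $E$, with all of the available logarithmic room living on the $u$-side. At present this can be carried out only for pairs of factored weights (defined below): there $v^{-p'}$ is essentially a maximal function, hence enjoys an $A_1$-type self-improvement which, together with known weak-type bounds for fractional maximal operators, closes the estimate with nearly optimal exponents.

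\emph{A second route, which stalls at the same place.} Alternatively one can try the conjugation method of Chung--P\'erez--Pereyra~\cite{CPP}: with $I_\al^{z}f=e^{zb}I_\al(e^{-zb}f)$ one has $[b,I_\al]f=\frac{1}{2\pi i}\int_{|z|=\varepsilon}z^{-2}I_\al^{z}f\,dz$, so it would suffice to prove the two-weight bound $I_\al:L^p\big(e^{p\rho b}v\big)\to L^{p,\infty}\big(e^{p\rho b}u\big)$ uniformly for $|\rho|\leq\varepsilon$, where $\rho=\mathrm{Re}\,z$ and $\varepsilon\sim 1/\|b\|_{BMO}$. By John--Nirenberg the perturbation $e^{p\rho b}$ can be absorbed into the $u$-bump at the cost of $p$ extra powers of the logarithm---room we have, since $(2p-1+\delta)-(p-1+\delta)=p$---but absorbing it on the $v$-side costs $p'$ extra powers, so this forces the larger, strong-type bump $t^{p'}\log(e+t)^{2p'-1+\delta}$ on $v$ rather than the bump in~\eqref{eqn:weak-frac1}. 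Thus the conjugation method only recovers the strong-type Theorem~\ref{fraccomm} (established by Li~\cite{li2006}); the weak-type refinement, with the smaller bump on $v$, appears to need a weak-type analogue of the two-weight bump-extrapolation theorem, which is not presently available.
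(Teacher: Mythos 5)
You have correctly recognized that the statement is a conjecture, not a theorem, and that the paper does not prove it. The paper itself says only that it cannot prove either weak-type conjecture but can verify them (with slightly larger bumps) for the special class of factored weights, which is exactly Theorem~\ref{thm:factored-frac}. Your discussion is therefore the right kind of response, and your two lines of attack match the paper's implicit framing: the sparse reduction plus generalized H\"older gives precisely the single-log bump $B(t)=t^{p'}\log(e+t)^{p'}$ on the $v$-side, and the obstruction is that one must then genuinely exploit the level set $E$ in $\sum_{Q\in\Si}(\avgQ|f|^pv)^{1/p}\|u^{1/p}\|_{A,Q}^{-1}\,u(E\cap Q)$ rather than bound $u(E\cap Q)\le u(Q)$, which would push you back to the strong-type double bump. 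Your second observation --- that the Cauchy-integral conjugation method of~\cite{CPP} necessarily perturbs the weight on \emph{both} sides of the pair, so it can only reproduce the strong-type Theorem~\ref{fraccomm} and cannot lower the $v$-bump --- is also correct and is a useful explanation of why that route does not resolve the conjecture. The one thing worth adding for completeness is that the factored-weight case (your parenthetical remark) is carried out in the paper via the sharp-maximal-function inequality~\eqref{eqn:sharp4} and Lemma~\ref{lemma:sharp-ineq}, and that this route loses one power of $\log$ on the $A$-bump (giving $2p+\delta$ rather than $2p-1+\delta$), which the authors attribute to the use of $M^{\#}$ rather than to anything essential; this is consistent with your ``nearly optimal exponents'' caveat.
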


Conjecture~\ref{conj:weak-sio} was proved in~\cite{CP2} when
$A(t)=t^{rp}$, $r>1$; in \cite{cruz-uribe-fiorenza02} this was
improved to $A(t) \approx t^p\exp([\log(t^p)]^r)$, $0<r<1$.
Conjecture~\ref{conj:weak-frac} was proved by Liu and
Lu~\cite{liu-lu2004}, again when $A(t)=t^{rp}$, $r>1$; they did so by
adapting the argument in~\cite{CP2} to the case of fractional
integrals.  By combining their proof with the ideas in
\cite{cruz-uribe-fiorenza02}, we get  that this
conjecture is also true with $A(t)\approx t^p\exp([\log(t^p)]^r)$, $0<r<1$.

By comparison, a singular integral $T$ satisfies $T :
L^p(v)\rightarrow L^{p,\infty}(u)$ if the pair $(u,v)$ satisfies
\eqref{eqn:weak-sio1} with $A(t)=t^p\log(e+t)^{p-1+\delta}$ and
$B(t)=t^p$ (see~\cite{CP1}), and it is conjectured that $I_\alpha$
satisfies a weak $(p,p)$ inequality if the pair $(u,v)$ satisfies
\eqref{eqn:weak-frac1} with this same pair of Young functions.  (See
\cite{CMP4}.)

We cannot prove either conjecture; however, we can prove two results
for a special class of weights that strongly suggests that these
conjectures are true.  We consider the so-called factored weights:
pairs of the form 
\[ (w_1(M_\Psi w_2)^{1-p},
(M_\Phi w_1)w_2^{1-p} ), \]
where $M_\Phi$ and $M_\Psi$ are Orlicz maximal operators (which are
defined in Section~\ref{section:prelimaries} below).  Such pairs
are a generalization of the pairs $(u,Mu)$ that have appeared in many
contexts.  Their explicit structure can be combined with
Calder\'on-Zygmund decomposition arguments to prove a variety of
weighted norm inequalities.  In addition, their factored form (which
is in some sense a two-weight version of the Jones' factorization
theorem) makes it straightforward to construct examples of pairs of
weights that satisfy $A_p$ bump conditions.  
Factored weights were introduced and studied systematically
in \cite{CMP4}.

\begin{theorem} \label{thm:factored-sio}
Given a Calder\'on-Zygmund singular integral operator $T$ and $p$,
$1<p<\infty$, then for any pair of non-negative, locally integrable
functions $w_1,\,w_2$, the pair of weights
\[ (\tilde{u},\tilde{v}) = (w_1(M_\Psi w_2)^{1-p},
(M_\Phi w_1)w_2^{1-p} ) \]
where $\Phi(t)=t\log(e+t)^{2p+\delta}$, $\delta>0$, $\Psi(t)=t\log(e+t)^{p'+1}$,
satisfies~\eqref{eqn:weak-sio1} with $A(t) =t^p\log(e+t)^{2p+\delta}$,
and $B(t)=t^{p'}\log(e+t)^{p'+1}$, and for any $b\in BMO$
the commutator $[b,T]$ satisfies \eqref{eqn:weak-sio2}.
\end{theorem}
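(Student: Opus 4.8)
The statement has two parts: that the factored pair $(\tilde{u},\tilde{v})$ satisfies the bump condition~\eqref{eqn:weak-sio1} with the indicated $A,B$, and that $[b,T]$ then maps $L^p(\tilde{v})$ into $L^{p,\infty}(\tilde{u})$. The first part is a routine Luxemburg-norm computation. For it I would use, on each cube $Q$, the elementary pointwise bounds $M_\Phi w_1(x)\ge\|w_1\|_{\Phi,Q}$ and $M_\Psi w_2(x)\ge\|w_2\|_{\Psi,Q}$ for $x\in Q$. Since $1-p<0$ this gives $\tilde{u}\le\|w_2\|_{\Psi,Q}^{1-p}w_1$ and $\tilde{v}\ge\|w_1\|_{\Phi,Q}w_2^{1-p}$ on $Q$, hence
\[\|\tilde{u}^{1/p}\|_{A,Q}\|\tilde{v}^{-1/p}\|_{B,Q}\le\|w_2\|_{\Psi,Q}^{-1/p'}\|w_1\|_{\Phi,Q}^{-1/p}\|w_1^{1/p}\|_{A,Q}\|w_2^{1/p'}\|_{B,Q}.\]
Because $A(t)\approx\Phi(t^p)$ and $B(t)\approx\Psi(t^{p'})$, the scaling identity for normalized Luxemburg norms yields $\|w_1^{1/p}\|_{A,Q}\approx\|w_1\|_{\Phi,Q}^{1/p}$ and $\|w_2^{1/p'}\|_{B,Q}\approx\|w_2\|_{\Psi,Q}^{1/p'}$, so the powers of $\|w_1\|_{\Phi,Q}$ and of $\|w_2\|_{\Psi,Q}$ cancel and the right-hand side collapses to a constant depending only on $p,n,\delta$, uniformly in $Q$; this is exactly why $\Phi$ and $\Psi$ are taken to match $A$ and $B$ in the stated way.

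For the weak-type bound I would run the Calderón--Zygmund decomposition scheme used in~\cite{CP2,CMP4} for the analogous two-weight weak-type estimates for $T$ and $I_\alpha$ themselves, with Pérez's endpoint estimate for $T$ replaced by the weighted endpoint modular inequality for the commutator, which is one logarithm more singular: for any weight $w$, $w(\{|[b,T]h|>t\})\lesssim\|b\|_{BMO}\int_{\R^n}\Phi_0(|h|/t)\,M_{L(\log L)^{1+\epsilon}}w\,dx$ with $\Phi_0(s)=s(1+\log^+ s)$ (see~\cite{P1}). By the homogeneity principle recorded in the remark after Theorem~\ref{main} we may fix $t=1$, so it suffices to show $\tilde{u}(\{|[b,T]f|>1\})\lesssim\|b\|_{BMO}\int_{\R^n}|f|^p\tilde{v}\,dx$. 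One then decomposes $f=g+\sum_j b_j$ by a Calderón--Zygmund stopping-time construction at level $1$ adapted to the pair $(\tilde{u},\tilde{v})$, with cubes $\{Q_j\}$: the good part $g$ is bounded and is handled via the endpoint inequality with $w=\tilde{u}$ together with the $L^q$-boundedness of $[b,T]$, while the bad part is split into the contribution of $\bigcup_j 3Q_j$, controlled directly from the stopping-time selection, and the contribution of its complement, controlled using $\int b_j\,dx=0$, the smoothness of the kernel, and $b\in BMO$. Throughout, the decisive structural fact — valid for factored pairs but false for general pairs obeying~\eqref{eqn:weak-sio1} — is the comparison
\[\tilde{u}=w_1(M_\Psi w_2)^{1-p}\le C\,(M_\Phi w_1)w_2^{1-p}=C\,\tilde{v}\qquad\text{a.e.},\]
which holds because $w_1\le C\,M_\Phi w_1$ and $M_\Psi w_2\ge c\,w_2$ with $1-p<0$, the constants $C,c$ depending only on $p$ and $\delta$.

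The main obstacle is the complementary part of the bad term: away from $\bigcup_j 3Q_j$ one must sum, over $j$ and over dyadic annuli about each $Q_j$, the kernel tails weighted by $|b(x)-b_{Q_j}|$ and by $\tilde{u}$, and show the total is $\lesssim\|b\|_{BMO}\int_{\R^n}|f|^p\tilde{v}\,dx$. Estimating $|b-b_{Q_j}|$ across the annuli produces powers of the logarithm of the annulus index, and these must be absorbed by the Orlicz averages $\|w_1\|_{\Phi,Q_j}$ and by the $M_\Psi$ built into $\tilde{u}$; it is here that the exponents $2p+\delta$ and $p'+1$ are consumed, and getting this bookkeeping to close uniformly in the $Q_j$ is the heart of the argument. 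Finally I would note that when $p>2$ none of this is needed: then $p'+1>2p'-1+\delta'$ for small $\delta'>0$, so by the first part the factored pair already satisfies the strong bump condition~\eqref{twowcond}, and Theorem~\ref{main} yields the stronger conclusion $[b,T]:L^p(\tilde{v})\to L^p(\tilde{u})$, whence~\eqref{eqn:weak-sio2}.
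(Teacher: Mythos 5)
Your first part --- that the factored pair satisfies the bump condition --- is correct and is exactly the paper's Lemma~\ref{lemma:reverse-factorization}: use the pointwise lower bounds on $M_\Phi w_1$ and $M_\Psi w_2$ over a cube $Q$ together with the scaling identity $\|g^{1/p}\|_{\Phi(\cdot^p),Q}=\|g\|_{\Phi,Q}^{1/p}$, so the Luxemburg norms of $w_1$ and $w_2$ cancel in the product. Your closing observation that for $p>2$ the factored pair already obeys the strong bump condition~\eqref{twowcond} and the theorem follows from Theorem~\ref{main} is also correct, but it leaves the essential range $1<p\le 2$ untouched.

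For the weak-type bound your route differs substantially from the paper's, and it is not a proof --- you explicitly leave the ``heart of the argument'' (the summation over $j$ and over dyadic annuli in the complementary region) unresolved. The paper does not run a Calder\'on--Zygmund decomposition for $[b,T]$ at all. Instead it uses the sharp-maximal transfer lemma~\ref{lemma:sharp-ineq} from~\cite{CMP4} (Theorem~9.10 there): if $M^\#_\delta(Tf)\le c\,Sf$ pointwise and a pair satisfies a one-sided log-bump condition with $B(t)=t^{p'}$, then $\|Tf\|_{L^{p,\infty}(u)}\lesssim\|Sf\|_{L^{p,\infty}(v)}$. Starting from P\'erez's sharp-function bound~\eqref{eqn:sharp3} for $M^\#_\delta([b,T]f)$, one peels off one Orlicz maximal operator at a time from the factored weight, repeatedly applying Lemma~\ref{lemma:sharp-ineq} (and~\eqref{eqn:multi-sharp} for $M_\epsilon(Tf)$), using the Carozza--Passarelli di Napoli composition identities to track the log powers, and finishing with the two-weight weak $(p,p)$ bounds for $M$ and for $M_{L\log L}$ (Theorem~\ref{thm:orlicz-weak}). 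It is precisely this iterated use of the sharp-function transfer that produces the extra logarithm ($2p+\delta$ rather than $2p-1+\delta$), as the Introduction explains.

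Your proposed CZ-decomposition scheme, as written, has genuine gaps beyond the undone bookkeeping. The ``good-part'' step is handled ``via the endpoint inequality with $w=\tilde{u}$ together with the $L^q$-boundedness of $[b,T]$'', but it is not stated which measure the $L^q$-bound is with respect to, nor how $\int\Phi_0(|g|)\,M_{L(\log L)^{1+\epsilon}}\tilde{u}\,dx$ is to be controlled by $\int|f|^p\tilde{v}\,dx$; for factored weights the composition $M_{L(\log L)^{1+\epsilon}}\tilde u$ would need to be compared to $\tilde v$, and no such comparison is given. The works~\cite{CP2,cruz-uribe-fiorenza02} you cite for this scheme required power bumps $A(t)=t^{rp}$ (or $\exp(\log^r)$ bumps), not log bumps, precisely because the bad-part tail sums do not obviously close in the log-bump regime; that is the unresolved issue in Conjectures~\ref{conj:weak-sio}--\ref{conj:weak-frac}. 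The pointwise comparison $\tilde{u}\le C\tilde{v}$ you note is true but is not used in the paper's argument and it is not apparent how it would replace the missing steps. If you could actually close the CZ argument for log bumps, that would improve the theorem (recovering the conjectured $2p-1+\delta$), but the sketch as given does not establish the stated result.
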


In the next result, $M_{\Phi,\alpha}$ and $M_{\Psi,\alpha}$ are
fractional Orlicz maximal operators; these will be defined in
Section~\ref{section:prelimaries} below.

\begin{theorem} \label{thm:factored-frac}
Given $\alpha$, $0<\alpha<n$, and $p$,
$1<p<\infty$, then for any pair of non-negative, locally integrable
functions $w_1,\,w_2$, the pair of weights
\[ (\tilde{u},\tilde{v}) = (w_1(M_{\Psi,\alpha} w_2)^{1-p},
(M_{\Phi,\alpha} w_1)w_2^{1-p} ) \]
where $\Phi(t)=t\log(e+t)^{2p+\delta}$, $\delta>0$, $\Psi(t)=t\log(e+t)^{p'}$,
satisfies~\eqref{eqn:weak-frac1} with $A(t) =t^p\log(e+t)^{2p+\delta}$,
and $B(t)=t^{p'}\log(e+t)^{p'}$, and for any $b\in BMO$
the commutator $[b,I_\alpha]$ satisfies \eqref{eqn:weak-frac2}.
\end{theorem}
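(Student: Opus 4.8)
The plan is to establish the two assertions of Theorem~\ref{thm:factored-frac} separately: first that the factored pair $(\tilde u,\tilde v)$ satisfies the bump condition \eqref{eqn:weak-frac1} with $A(t)=t^p\log(e+t)^{2p+\delta}$ and $B(t)=t^{p'}\log(e+t)^{p'}$, and then that $[b,I_\al]:L^p(\tilde v)\to L^{p,\infty}(\tilde u)$ for every $b\in BMO$. The bump verification is a short computation of exactly the type used for the singular integral pair in Theorem~\ref{thm:factored-sio}. Since $\|\tilde u^{1/p}\|_{A,Q}^p=\|\tilde u\|_{\bar A,Q}$ and $\|\tilde v^{-1/p}\|_{B,Q}^{p'}=\|\tilde v^{1-p'}\|_{\bar B,Q}$ with $\bar A(t)=A(t^{1/p})\approx t\log(e+t)^{2p+\delta}=\Phi(t)$ and $\bar B(t)=B(t^{1/p'})\approx t\log(e+t)^{p'}=\Psi(t)$, it suffices to bound $|Q|^{\al/n}\|\tilde u\|_{\Phi,Q}^{1/p}\|\tilde v^{1-p'}\|_{\Psi,Q}^{1/p'}$. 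On a fixed cube $Q$ the definition of the fractional Orlicz maximal operators gives $M_{\Psi,\alpha}w_2(x)\ge|Q|^{\al/n}\|w_2\|_{\Psi,Q}$ and $M_{\Phi,\alpha}w_1(x)\ge|Q|^{\al/n}\|w_1\|_{\Phi,Q}$ for $x\in Q$; since $1-p<0$ and $1-p'<0$, and since $\tilde v^{1-p'}=(M_{\Phi,\alpha}w_1)^{1-p'}w_2$ (note $(1-p)(1-p')=1$), these bounds let us pull the fractional maximal factors out of the Orlicz norms over $Q$, giving $\|\tilde u\|_{\Phi,Q}\le(|Q|^{\al/n}\|w_2\|_{\Psi,Q})^{1-p}\|w_1\|_{\Phi,Q}$ and $\|\tilde v^{1-p'}\|_{\Psi,Q}\le(|Q|^{\al/n}\|w_1\|_{\Phi,Q})^{1-p'}\|w_2\|_{\Psi,Q}$. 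Raising to the powers $1/p$ and $1/p'$, multiplying, and using $(1-p)/p=-1/p'$ and $(1-p')/p'=-1/p$, all the $\|w_1\|_{\Phi,Q}$ and $\|w_2\|_{\Psi,Q}$ factors cancel and the powers of $|Q|^{\al/n}$ sum to $-1$, so $\|\tilde u^{1/p}\|_{A,Q}\|\tilde v^{-1/p}\|_{B,Q}\lesssim|Q|^{-\al/n}$ uniformly in $Q$, which is \eqref{eqn:weak-frac1}.

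For the weak-type inequality I would run a Calder\'on-Zygmund decomposition argument, modeled on the proof of Theorem~\ref{thm:factored-sio} for singular integrals and on the commutator arguments of Cruz-Uribe--P\'erez~\cite{CP2}, Liu--Lu~\cite{liu-lu2004} and Cruz-Uribe--Fiorenza~\cite{cruz-uribe-fiorenza02}, with the Hardy-Littlewood maximal operator replaced by the fractional maximal operator $M_\al$ and the Orlicz maximal operator $M_{L\log L}$ (which controls commutators of singular integrals) replaced by its fractional analog $M_{L\log L,\al}$. Normalize $\|b\|_{BMO}=1$, fix $\lambda>0$, and take $f\ge0$ bounded with compact support. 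Decompose $f$ at a height comparable to $\lambda$: the cubes $\{Q_j\}$ satisfy $\lambda<\frac1{|Q_j|}\int_{Q_j}f\lesssim\lambda$, and $f=g+h$ with $h=\sum_jh_j$, $h_j=\big(f-\frac1{|Q_j|}\int_{Q_j}f\big)\chi_{Q_j}$, and $g$ bounded by a multiple of $\lambda$. The contribution of $\bigcup_j2Q_j$ to $\tilde u$ is $\sum_j\tilde u(2Q_j)$, which, using $\lambda^p<\frac1{|Q_j|}\int_{Q_j}f^p$, is bounded by $\lambda^{-p}$ times a sum the factored structure of $(\tilde u,\tilde v)$ converts into $\int_{\R^n}f^p\tilde v$; the good part $g$ is handled by standard arguments (bounding $g$ by a multiple of $\lambda$, splitting the operator across the dilated cubes, and using a strong-type bound for $[b,I_\al]$ at an exponent below $p$, available by Chanillo's theorem~\cite{SC}), again yielding sums over $j$ of the same shape. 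For the bad part, on $\big(\bigcup_j2Q_j\big)^c$ one uses for each $j$ the commutator identity
\[ [b,I_\al]h_j=(b-b_{Q_j})\,I_\al h_j-I_\al\big((b-b_{Q_j})h_j\big),\qquad b_{Q_j}=\frac{1}{|Q_j|}\int_{Q_j}b, \]
the mean-zero property of $h_j$, the size and smoothness estimates for the kernel $|x-y|^{\al-n}$ for $x$ far from $Q_j$, and the John-Nirenberg inequality to absorb the factors $|b-b_{Q_j}|$. Summing over $j$ and applying the generalized H\"older inequality in the Orlicz scale together with the factored form of $(\tilde u,\tilde v)$ — the same bookkeeping as in the bump verification, and as in the proof that $M_{L\log L,\al}$ satisfies the relevant two-weight bounds (cf.\ Theorem~\ref{Bpmax} and Definition~\ref{defn:Bpcond}) — produces the bound $C\lambda^{-p}\int_{\R^n}|f|^p\tilde v$, which is \eqref{eqn:weak-frac2}.

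The main obstacle is the bad part, and within it the term $I_\al\big((b-b_{Q_j})h_j\big)$. The factor $b-b_{Q_j}$ destroys the mean-zero cancellation one exploits for $I_\al h_j$ by itself, and, after John-Nirenberg, it upgrades each $L^1$-average of $f$ over $Q_j$ into an $L\log L$-average $\|f\|_{L\log L,Q_j}$; after summation one must therefore compare, roughly, $\sum_j|Q_j|^{\al/n}\|f\|_{L\log L,Q_j}\,\tilde u(Q_j)$ with $\int_{\R^n}f^p\tilde v$. Carrying out this comparison is exactly what pins down the logarithmic exponents in $\Phi$ and $\Psi$: the $L\log L$ average costs one extra logarithm on top of the single logarithm already needed for $I_\al$ alone, producing the ``double log'' $t\log(e+t)^{2p+\delta}$ on the $w_1$ side and $t\log(e+t)^{p'}$ on the $w_2$ side, while the strict exponent $2p+\delta$ (rather than $2p$) supplies the extra small power of the logarithm that makes the relevant Orlicz integrals and series converge. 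This is the same mechanism behind the double-log bumps in Theorem~\ref{main} and Theorem~\ref{fraccomm}, now in weak-type form; the one genuinely new point compared to the singular integral case is the need to carry the fractional scaling factor $|Q|^{\al/n}$ consistently through the decomposition, the kernel estimates, and the generalized H\"older step. Once the bad part is controlled, assembling \eqref{eqn:weak-frac2} from the good part and the cubes $2Q_j$, and recording the bump verification, is routine.
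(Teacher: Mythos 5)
The bump verification in your first paragraph is correct (modulo the clash of notation: you write $\bar A(t)$ for $A(t^{1/p})$, which the paper reserves for the associate Young function, not for this rescaling); it is essentially a rederivation of the paper's Lemma~\ref{lemma:reverse-factorization}.

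The weak-type part, however, is where your proposal departs from the paper, and there is a genuine gap. The paper does not run a Calder\'on--Zygmund decomposition at all: its proof of Theorem~\ref{thm:factored-frac} refers back to the proof of Theorem~\ref{thm:factored-sio}, which proceeds by applying the pointwise sharp-function estimate \eqref{eqn:sharp4}, $M_\delta^\#([b,I_\al]f)(x)\lesssim \|b\|_{BMO}(I_\al f(x)+M_{L\log L,\al}f(x))$, then invoking the two-weight weak-type inequality for $M^\#$ (Lemma~\ref{lemma:sharp-ineq}) to pass successively from $[b,I_\al]$ down to $I_\al$, then to $M_\al$, and finally to the weak $(p,p)$ bound for the fractional Orlicz maximal operator (Theorem~\ref{thm:orlicz-weak}). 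Each passage \emph{changes the weight pair}, and the factored form of $(\tilde u,\tilde v)$ is used precisely to construct the ladder of intermediate pairs, with the $A_1$ property of $M_{\Psi,\al}w_2$ (so $M(M_{\Psi,\al}w_2)\approx M_{\Psi,\al}w_2$) collapsing the iteration on the $w_2$ side. Your sketch never supplies a substitute for this mechanism; the factored structure appears in your outline only through the bump condition, which by itself cannot suffice (otherwise Conjecture~\ref{conj:weak-frac} would be proved, and the paper explicitly leaves it open).

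Concretely, your good-part step does not go through as stated. Chanillo's theorem~\cite{SC} is an unweighted $L^p\to L^q$ bound; to run a Chebyshev argument on the good function $g$ against the measure $\tilde u$ you would need a two-weight strong-type bound for $[b,I_\al]$ with the pair $(\tilde u,\tilde v)$, and this is not available under the log-bump hypotheses of the theorem (the references \cite{CP2,liu-lu2004,cruz-uribe-fiorenza02} you invoke all require the strictly stronger power or exp-log bumps on the $A$ side). The bad-part summation is likewise only asserted. In short: the Calder\'on--Zygmund route is plausible as a heuristic, but the two places it would have to be closed --- the good-part estimate, and the use of the factored structure beyond its implied bump condition --- are exactly the places your sketch skips, and they are replaced in the paper by the sharp-maximal chaining of Lemmas~\ref{lemma:reverse-factorization} and~\ref{lemma:sharp-ineq}.
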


In both theorems the power of the logarithm on the function $A$ is
$2p+\delta$ instead of the conjectured $2p-1+\delta$; we believe that
this extra logarithm is not fundamental but rather is a consequence of
the proof.  The proof uses a two-weight inequality for the sharp
maximal function $M^\#$ which results in a loss of information.  The
proofs of Theorems~\ref{thm:factored-sio} and~\ref{thm:factored-frac}
can be adapted to prove Theorems~\ref{main} and~\ref{fraccomm} for
factored weights, but again in both cases we have to take
$A(t)=t^p\log(e+t)^{2p+\delta}$.  (Details are left to the interested
reader.)  As we noted above, this result for factored weights was one
motivation for initially conjecturing that Theorem~\ref{main} was
true. 

\subsection*{Organization}
The remainder of this paper is organized as follows.  In
Section~\ref{section:prelimaries} we gather a number of definitions
and results needed in our proofs.  In Section~\ref{section:local-mean}
we estimate the local mean oscillation of the commutator of a dyadic
singular integral, a key step in our proof of Theorem~\ref{main},
which we give in Section~\ref{section:proof-main-sio}.  In
Sections~\ref{section:proof-frac-onewt}
and~\ref{section:proof-fracccomm} we prove Theorems~\ref{sharpcom}
and~\ref{fraccomm} for commutators of fractional integrals.  In
Section~\ref{section:factored-proofs} we prove our weak type
inequalities for factored weights.  And finally, in
Section~\ref{section:sharp-examples} we construct the examples which
show that our results are sharp.  

Throughout this paper, all notation is standard or will be defined as
needed.  We will denote by $c$ a constant that generally depends only
on the dimension, the operator under consideration and the value of
$p$; the value of this constant, however, will often vary from line to
line.

\section{Preliminaries}
\label{section:prelimaries}
We start with some basic facts and notation.  By a weight we will mean
a measurable, non-negative function that is positive on a set of
positive measure. A pair of weights $(u,v)$
will always consist of non-negative, measurable functions such that:
$u>0$ on a set of positive measure, $u<\infty$ almost everywhere,
$v>0$ almost everywhere, and $v<\infty$ on a set of positive measure.
Given $p$, $1<p<\infty$, $p'$
will denote the dual exponent $p/(p-1)$.  For $1<p<\infty$ and a
weight $w$, $L^p(w)$ is the set of all measurable functions such that
$$\|f\|_{L^p(w)}=\left(\int_{\R^n} |f(x)|^p w(x)\, dx\right)^{1/p}<\infty.$$
When $w\equiv 1$, we write $L^p(\R^n)$.  

Hereafter, $Q$ will denote a cube.  Let $\D$ be the set of all dyadic
cubes in $\R^n$: i.e., cubes of the form $2^k(m+[0,1)^n)$ where $k\in
\Z$ and $m\in \Z^n$.  For $Q\in \D$, $\D(Q)$ is the set of all dyadic
subcubes of $Q$.  Given a dyadic cube $Q\in \D$ and an integer
$\tau\geq 0$, $Q^\tau$ will denote the unique dyadic cube containing $Q$
such that $|Q^\tau|=2^{\tau n}|Q|$.
 
Given a set $E$, we will use two different notions of an ``average" of
a function $f$ on the set $E$.  Let $a_f(E)$  denote the
mean value of $f$ on the set $E$:
 $$a_f(E)=\dashint_E f(x)\, dx=\frac{1}{|E|}\int_E f(x)\, dx.$$
Let $m_f(E)$ denote the median value of $f$ on $E$: the (possibly non-unique) number such that
$$\max\big(|\{x\in E: f(x)>m_f(E)\}|, |\{x\in E: f(x)<m_f(E)\}|\big)
\leq \frac{|E|}{2}.$$

\subsection{Dyadic operators} 
Below we will actually prove Theorems~\ref{main} and~\ref{fraccomm}
for dyadic singular and fractional integral operators.  Here we define
these operators and show how they can be used to approximate their
non-dyadic counterparts. 

\begin{definition} 
Given an integer $\tau\geq1$ we say $T^d$ is a dyadic singular integral of order $\tau$ if
$$T^d f(x)=\sum_{Q\in \D}\langle f, h_Q\rangle \cdot g_Q(x),$$
where $h_Q$ and $g_Q$ are functions that satisfy:
\begin{enumerate}[(i)]
\item $h_Q$ and $g_Q$ are supported on $Q$;
\item $h_Q$ and $g_Q$ are constant on $Q'\in \D(Q)$ with $|Q'|\leq 2^{-\tau n}|Q|$;
\item $\|h_Q\|_\infty, \|g_Q\|_\infty\leq |Q|^{-1/2}$;
\item $\dss \int_Q h_Q(x)\, dx=\int_Q g_Q(x)\, dx=0$.
\end{enumerate}
\end{definition}

Dyadic singular integrals are bounded on $L^2(\R^n)$ and of weak type
$(1,1)$.  The $L^2(\R^n)$ bounds follow from the Cotlar-Stein
lemma and the weak $(1,1)$ inequality follows from the usual
Calder\'on-Zygmund decomposition and the properties (ii) and (iv)
above.   (See~\cite{LPR2010}.)

The corresponding maximal truncated dyadic singular integral is
defined by
\begin{equation}\label{maxdyadic}
T^{d}_* f(x) =\sup_{l\in \Z} |T^{d}_lf(x)| 
\end{equation}
where
$$T_l^{d}f(x)=\sum_{{Q\in \D}\atop{|Q|\geq 2^{nl}}} \langle f, h_Q\rangle\cdot g_Q(x).$$
These operators also satisfy strong $(2,2)$ and weak $(1,1)$
inequalities (see \cite{HLRV-P2009}).

For $r>0$ and $\beta\in \R$,  let $r\D^\beta$ be the collection of cubes of the
form $r2^k(m+[\beta,\beta+1)^n)$, where $m\in \Z^n$.  Define the dyadic singular
integral operator of order $\tau$ adapted to $r\D^\beta$ by
$$T^{r,\beta}f(x)=\sum_{Q\in r\D^\beta} \langle f, h_Q\rangle\cdot g_Q(x),$$
where $h_Q$ and $g_Q$ satisfy properties (i), (ii), (iii), and (iv)
for cubes in $r\D^\beta$.  The classical singular integral operators
lie in the convex hull of the dyadic singular integral operators
adapted to $r\D^\beta$.  As a consequence we have the following
approximation theorem.  

\begin{theorem}[\cite{DV,Pet1, Pet2}] 
  Given $p$, $1<p<\infty$, suppose $T$ is the Hilbert transform, a
  Riesz transform, or the Ahlfors-Beurling operator.  Then there exists
  $\tau\geq 1$ (depending on $T$) and dyadic singular integral
  operators $\{T^{r,\beta}\}$ of order $\tau$ such that
\[  \|Tf\|_{L^p(\nu)}\leq 
c_\tau\sup_{{r>0}\atop{\beta \in \R^n}}\|T^{r,\beta}f\|_{L^p(\nu)}, \]
for all weights $\nu$ and functions $f$.
\end{theorem}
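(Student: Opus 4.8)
The plan is to deduce the theorem from the known dyadic representations of these three operators together with a soft convexity argument. The underlying fact (the content of \cite{DV,Pet1,Pet2}) is that each of the Hilbert transform, the Riesz transforms, and the Ahlfors--Beurling operator can be written as an \emph{average} of dyadic singular integral operators of one fixed order $\tau$: there is a probability measure $\mu$ on the parameter space $\{(r,\beta): r>0,\ \beta\in\R^n\}$ (supported, in practice, on a bounded fundamental domain of parameters, e.g. $r\in[1,2]$ with Haar measure $dr/r$ and $\beta$ in a unit cube), a constant $c_\tau$, and an integer $\tau\geq 1$ depending on $T$ such that
\[
Tf = c_\tau\int T^{r,\beta}f\, d\mu(r,\beta)
\]
for $f$ in a suitable dense class, where each $T^{r,\beta}$ is a Haar shift adapted to the grid $r\D^\beta$. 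For the Hilbert transform this is Petermichl's identification of $H$ with an average of Haar shifts (with $\tau$ an absolute constant); for the Riesz transforms and the Ahlfors--Beurling operator the analogous identities are due to Petermichl and to Dragi\v{c}evi\'c--Volberg, with $\tau$ depending on the operator.

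Step one is to record the representation and identify $\tau$. The only verification to be done by hand is that the building blocks $T^{r,\beta}$ fit the Definition of a dyadic singular integral of order $\tau$: the shift kernels $h_Q,g_Q$ are supported on $Q$, are constant on the dyadic subcubes of $Q$ of side length $2^{-\tau}\ell(Q)$, have vanishing mean, and obey $\|h_Q\|_\infty,\|g_Q\|_\infty\lesssim |Q|^{-1/2}$. This is immediate from the explicit form of the shifts; any fixed multiplicative constant lost here is absorbed by replacing $T^{r,\beta}$ with a suitable scalar multiple of itself and enlarging $c_\tau$ accordingly.

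Step two is to pull the $L^p(\nu)$ norm inside the average. Since $1<p<\infty$, $\|\cdot\|_{L^p(\nu)}$ is a norm, so Minkowski's integral inequality gives
\[
\|Tf\|_{L^p(\nu)} = c_\tau\Big\|\int T^{r,\beta}f\, d\mu(r,\beta)\Big\|_{L^p(\nu)}
\leq c_\tau\int \|T^{r,\beta}f\|_{L^p(\nu)}\, d\mu(r,\beta)
\leq c_\tau\sup_{{r>0}\atop{\beta\in\R^n}}\|T^{r,\beta}f\|_{L^p(\nu)},
\]
using that $\mu$ is a probability measure in the last step; this is exactly the asserted inequality. Measurability of $(r,\beta)\mapsto\|T^{r,\beta}f\|_{L^p(\nu)}$ and convergence of the integral cause no difficulty: one may first take $f$ bounded with compact support, restrict $(r,\beta)$ to a countable set over which the supremum is realized up to $\epsilon$, and then pass to general $f$ and $\nu$ by Fatou's lemma.

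The only genuinely hard input is Step one---the existence of the averaging formula, especially for the Riesz transforms and the Ahlfors--Beurling operator---and that is precisely the nontrivial content of the cited work of Petermichl and of Dragi\v{c}evi\'c--Volberg, which we invoke rather than reprove. Granting those identities, the proof reduces to the convexity estimate above; the point of the theorem in what follows is that it lets us replace $T$ by the dyadic models $T^{r,\beta}$ in all subsequent two-weight arguments at the cost only of the dimensional constant $c_\tau$.
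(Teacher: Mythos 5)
Your proof is correct and matches the paper's approach: the paper simply cites \cite{DV,Pet1,Pet2} and notes that these classical operators lie in the convex hull of the dyadic shifts adapted to the grids $r\D^\beta$, which is precisely your averaging representation plus the Minkowski/convexity step. No discrepancy to report.
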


For example, the Hilbert transform can be approximated by dyadic
singular integrals of order $2$, the so called Haar shift operators.  Hence, to obtain a bound on the norm
of the Hilbert transform it suffices to bound the corresponding dyadic
singular integrals $T^{r,\beta}$ with a constant independent of $r$
and $\beta$.  Below we will prove estimates only for the standard
dyadic grid; it will be immediate that the same proofs yield bounds
for dyadic singular integral operators adapted to any grid
$r\D^\beta$.  

To apply our results to more general singular integral operators, we
would need to derive bounds on the dyadic singular integrals that were
polynomial in the order $\tau$.  However, the constants we get are
exponential in $\tau$; this is one of the obstacles that prevents us
from obtaining bounds for general singular integral operators as in
\cite{H}.  We will indicate the precise places where this occurs in
Remarks~\ref{exp1} and~\ref{exp2} below.  We do not know if our
methods can be modified to obtain a polynomial dependence on the order
$\tau$.

\medskip

The fractional integral operator is easier to approximate because its kernel is
positive and locally integrable.  Sawyer and Wheeden~\cite{SW}
introduced the dyadic fractional integral operator and proved it could
be used to approximate $I_\alpha$. 

\begin{definition} 
Given $\alpha$, $0<\al<n$, define the dyadic fractional integral
operator by
$$I_\al^d f(x)=\sum_{Q\in \D} |Q|^{\al/n} \dashint_Qf(y)\, dy \cdot
\chi_Q(x).$$
\end{definition}

To estimate $I_\al$ we only need
to average $I_\alpha^d$ over translations, $\tau_tf=f(\,\cdot -t)$.  

\begin{theorem}[\cite{SW}] Given $\alpha$, $0<\al<n$, and $p$,
  $1<p<\infty$, then
$$\|I_\al f\|_{L^p(\nu)}\leq c\sup_{\beta\in \R^n} \|\tau_\beta I^d_\al(\tau_{-\beta}f)\|_{L^p(\nu)}$$
for all weights $\nu$ and functions $f$. 
\end{theorem}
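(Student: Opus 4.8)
This is the Sawyer--Wheeden dyadic approximation theorem for $I_\al$, and the plan is the one now standard for such results: dominate the Euclidean kernel $|x-y|^{\al-n}$, after averaging over translations of the dyadic lattice, by the dyadic operators $f\mapsto\tau_\beta I^d_\al(\tau_{-\beta}f)$, and then invoke Minkowski's integral inequality.  A direct computation using $\tau_t g=g(\cdot-t)$ identifies, for each $\beta\in\R^n$,
$$\tau_\beta I^d_\al(\tau_{-\beta}f)(x)=\sum_{R\in\D+\beta}|R|^{\al/n}\Big(\dashint_R f\Big)\chi_R(x)=:I^{d,\beta}_\al f(x),$$
the dyadic fractional integral built on the translated lattice $\D+\beta$.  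Since $I_\al$ has a positive kernel, in proving the norm inequality one may replace $f$ by $|f|$, so it suffices to treat $f\ge0$ (this is also the only case used in the sequel).

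The heart of the proof is a geometric averaging estimate.  Let $Q_N=[-N,N)^n$.  I claim there is a constant $c=c_{n,\al}>0$ such that for all $f\ge0$ and all $x$,
$$\dashint_{Q_N} I^{d,\beta}_\al f(x)\,d\beta\ \ge\ c\,I_\al^{(\le N)}f(x),\qquad I_\al^{(\le N)}f(x):=\int_{|x-y|\le N}\frac{f(y)}{|x-y|^{n-\al}}\,dy.$$
To prove this, break $I_\al^{(\le N)}f(x)$ over the annuli $2^k<|x-y|\le2^{k+1}$ and pull the kernel out of each one (using $0<\al<n$), obtaining a sum $\lesssim\sum_{2^{k+5}\le N}2^{k(\al-n)}\int_{|x-y|\le2^{k+1}}f$.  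Fix such a $k$ and consider the unique cube $R(\beta)\in\D+\beta$ of sidelength $2^{k+3}$ containing $x$.  As $\beta$ varies, $R(\beta)$ moves with period $2^{k+3}$ in each coordinate; in each coordinate the ball $B(x,2^{k+1})$ has length $2^{k+2}<2^{k+3}$, so an interval of admissible positions of $R(\beta)$ of relative length $\tfrac12$ makes $R(\beta)\supseteq B(x,2^{k+1})$.  Since this period is $\le N$, restricting to $\beta\in Q_N$ costs only a bounded factor, so for a set of $\beta$ of density $\ge c_n>0$ inside $Q_N$ the cube $R(\beta)$ contains $B(x,2^{k+1})\supseteq\{y:|x-y|\le2^{k+1}\}$.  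For such $\beta$ the single term of $I^{d,\beta}_\al f(x)$ coming from $R(\beta)$ already dominates $2^{(k+3)(\al-n)}\int_{|x-y|\le2^{k+1}}f\gtrsim 2^{k(\al-n)}\int_{|x-y|\le2^{k+1}}f$; averaging in $\beta$ and summing the (nonnegative) contributions over $k$ yields the claim.

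Granting this, Minkowski's integral inequality gives, for every $N$,
$$\|I_\al^{(\le N)}f\|_{L^p(\nu)}\le c\Big\|\dashint_{Q_N}I^{d,\beta}_\al f\,d\beta\Big\|_{L^p(\nu)}\le c\dashint_{Q_N}\big\|I^{d,\beta}_\al f\big\|_{L^p(\nu)}\,d\beta\le c\sup_{\beta\in\R^n}\big\|\tau_\beta I^d_\al(\tau_{-\beta}f)\big\|_{L^p(\nu)},$$
the last step because a normalized average is at most the supremum.  Letting $N\to\infty$, the left-hand side increases to $\|I_\al f\|_{L^p(\nu)}$ by the monotone convergence theorem, while the right-hand side does not depend on $N$; this is the desired inequality.

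The only genuine difficulty is the averaging estimate, and within it the bookkeeping that lets one normalized average over the \emph{growing} cubes $Q_N$ control every dyadic scale of the kernel at once: at scales much smaller than $N$ the good translates are equidistributed in $Q_N$, but at scales comparable to $N$ one must check that $Q_N$ still meets the good set of translates in a set of definite proportion (note this holds no matter how far $x$ is from the origin, since the relevant periodicity is in $\beta$, not in position).  This is exactly why the averaging cube is allowed to grow and one passes to the limit at the end rather than fixing a single cube once and for all.  (One could instead renormalize using the invariance of $\D$ under dilation by powers of $2$; the truncate-and-pass-to-the-limit route seems cleanest.)  Everything else --- the identification above, the reduction to $f\ge0$, and the Minkowski step --- is routine.
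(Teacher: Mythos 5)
The paper does not prove this statement; it simply cites Sawyer--Wheeden \cite{SW}. Your proof is correct and implements the standard averaging-over-translations argument that is at the core of that reference: the identification of $\tau_\beta I^d_\al(\tau_{-\beta}\cdot)$ with the dyadic fractional integral on the shifted grid $\D+\beta$, the periodicity/density estimate showing that for a fixed scale a positive proportion of translates $\beta\in Q_N$ produce a cube $R(\beta)$ engulfing $B(x,2^{k+1})$, and the pass from the average to the supremum via Minkowski followed by monotone convergence are all sound. One small point worth flagging: your reduction ``replace $f$ by $|f|$'' controls $\|I_\al f\|_{L^p(\nu)}$ by $\sup_\beta\|\tau_\beta I^d_\al(\tau_{-\beta}|f|)\|_{L^p(\nu)}$, which is not literally the right-hand side of the stated inequality for sign-changing $f$; the statement is really intended for $f\ge0$ (or with $|f|$ inserted on the right), which you do acknowledge and which is the only case used in the paper. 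Also, strictly your averaging estimate dominates $I^{(\le N/32)}_\al f$ rather than $I^{(\le N)}_\al f$, but since both increase to $I_\al f$ as $N\to\infty$ this is harmless.
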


\subsection{Young functions and Orlicz spaces}

We follow the terminology and notation of~\cite{CMP4}.  A
function $\Phi$ is a Young function if $\Phi:[0,\infty)\ra [0,\infty)$ is
continuous, convex and strictly increasing, $\Phi(0)=0$ and
$\Phi(t)/t\ra \infty$ as $t\ra \infty$.  We will use the letters
$\Phi,\Psi,\ldots$ along with $A,B,\ldots$ to represent Young
functions.  The main examples we will be dealing with are
$\Phi(t)=t^r[\log(e+t)]^s$ for some $r\geq1$ and $s\in \R$.
(Hereafter we will write this more simply as $t^r\log(e+t)^s$.) 
Given a Young function $\Phi$,
the associate function $\bar \Phi$ is the Young function defined by
$$\bar{\Phi}(t)=\sup_{s>0}[st-\Phi(s)],\qquad t>0.$$
The functions $\Phi$ and $\bar{\Phi}$ satisfy 
$$t\leq \Phi^{-1}(t)\bar{\Phi}^{-1}(t)\leq 2t, \qquad t>0.$$ 
Given two Young functions $\Phi,\Psi$, we will use the notation
$\Phi(t)\approx \Psi(t)$ if there exists constants $c,C, t_0>0$ such
that for all $t\geq t_0$,
$$c\Phi(t)\leq \Psi(t)\leq C\Phi(t).$$

Given a cube $Q$, define the normalized Luxemburg norm of $f$ on $Q$
by 
$$\|f\|_{\Phi,Q}=\inf\left\{ \lambda>0: \dashint_Q \Phi\Big(\frac{|f(x)|}{\lambda}\Big)\, dx\leq 1\right\}.$$
 When $\Phi(t)=t^r$ for some $r>1$, then 
$$\|f\|_{\Phi,Q}=\left(\dashint_Q |f(x)|^r\,dx\right)^{1/r}\equiv \|f\|_{r,Q}.$$

There is a  generalized H\"older inequality for the Luxemburg norm.

\begin{lemma} \label{GenHolder}
If $\Phi,\Psi,$ and $\Theta$ are Young functions such that 
\[ \Phi^{-1}(t)\Psi^{-1}(t)\leq k\Theta^{-1}(t) \]
 for $t\geq t_0\geq 0$, then
$$\|fg\|_{\Theta,Q}\leq c \|f\|_{\Phi,Q}\|g\|_{\Psi,Q}.$$
In particular, for any Young function $\Phi$,
$$\dashint_Q |f(x)g(x)|\, dx \leq c \|f\|_{\Phi,Q}\|g\|_{\bar{\Phi},Q}.$$
\end{lemma}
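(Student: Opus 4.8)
The plan is to reduce the norm inequality to a single pointwise ``Young-type'' inequality relating $\Phi$, $\Psi$, and $\Theta$, and then integrate over $Q$. First I would establish that there is a constant $C_0=C_0(k,t_0,\Phi,\Psi,\Theta)\ge 0$ with
\[
\Theta\left(\frac{xy}{k}\right)\le \Phi(x)+\Psi(y)+C_0
\qquad\text{for all }x,y\ge 0.
\]
Since the Young functions here are continuous and strictly increasing, I may write $x=\Phi^{-1}(s)$ and $y=\Psi^{-1}(u)$ with $s=\Phi(x)$, $u=\Psi(y)$. If $s+u\ge t_0$, then monotonicity of $\Phi^{-1}$ and $\Psi^{-1}$ together with the hypothesis give $xy=\Phi^{-1}(s)\Psi^{-1}(u)\le \Phi^{-1}(s+u)\Psi^{-1}(s+u)\le k\,\Theta^{-1}(s+u)$, so applying $\Theta$ yields $\Theta(xy/k)\le s+u=\Phi(x)+\Psi(y)$. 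If instead $s+u<t_0$, then $x<\Phi^{-1}(t_0)$ and $y<\Psi^{-1}(t_0)$, whence $xy<\Phi^{-1}(t_0)\Psi^{-1}(t_0)$ and $\Theta(xy/k)<\Theta(\Phi^{-1}(t_0)\Psi^{-1}(t_0)/k)=:C_0$.

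Next I would use the homogeneity of the Luxemburg norm. Discarding the trivial cases in which $\|f\|_{\Phi,Q}$ or $\|g\|_{\Psi,Q}$ equals $0$ or $\infty$, I may normalize so that $\dashint_Q\Phi(|f|)\le 1$ and $\dashint_Q\Psi(|g|)\le 1$; it then suffices to produce $c$ with $\dashint_Q\Theta(|fg|/c)\le 1$. Since $\Theta$ is convex with $\Theta(0)=0$, we have $\Theta(t/\lambda)\le\Theta(t)/\lambda$ for every $\lambda\ge 1$. Taking $\lambda=2+C_0$ and $c=k\lambda$, the pointwise bound above gives $\Theta(|fg|/c)\le \lambda^{-1}\bigl(\Phi(|f|)+\Psi(|g|)+C_0\bigr)$ on $Q$, and averaging over $Q$ yields $\dashint_Q\Theta(|fg|/c)\le \lambda^{-1}(1+1+C_0)\le 1$. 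Hence $\|fg\|_{\Theta,Q}\le c$, and undoing the normalization gives $\|fg\|_{\Theta,Q}\le c\|f\|_{\Phi,Q}\|g\|_{\Psi,Q}$.

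For the displayed special case I would argue directly from the definition of $\bar\Phi$: since $\bar\Phi(y)=\sup_{s>0}[sy-\Phi(s)]\ge xy-\Phi(x)$, we have $xy\le\Phi(x)+\bar\Phi(y)$ for all $x,y\ge 0$; normalizing $f$ and $g$ as above and integrating gives $\dashint_Q|fg|\le 2\|f\|_{\Phi,Q}\|g\|_{\bar\Phi,Q}$. (Equivalently, one can apply the inequality just established with $\Psi=\bar\Phi$ and $\Theta(t)=t$, using $\Phi^{-1}(t)\bar\Phi^{-1}(t)\le 2t$.)

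The only genuinely nontrivial point is the first step, and within it the passage from the hypothesis, which is assumed only for $t\ge t_0$, to an inequality valid for every $x,y\ge 0$; this is precisely what forces the additive constant $C_0$ and the extra rescaling by $\lambda$. Everything else is routine bookkeeping with convexity and the homogeneity of $\|\cdot\|_{\Phi,Q}$.
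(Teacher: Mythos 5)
Your proof is correct and follows the standard Orlicz-space argument for the generalized H\"older inequality: derive the pointwise Young-type inequality $\Theta(xy/k)\le \Phi(x)+\Psi(y)+C_0$ from the hypothesis on the inverses, then normalize and use convexity of $\Theta$ to absorb the additive constant. The paper states Lemma~\ref{GenHolder} without proof, as a standard fact, and the argument you give is precisely the textbook proof; the extra constant $C_0$ and the rescaling by $\lambda=2+C_0$ handle the $t\ge t_0$ restriction correctly, the normalization step is justified since the infimum defining the Luxemburg norm is attained, and the closing argument for the special case via $xy\le\Phi(x)+\bar\Phi(y)$ is clean (you are right to give it directly rather than apply the main inequality with $\Theta(t)=t$, since $t\mapsto t$ is not a Young function under the paper's definition).
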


Given a Young function $\Phi$ define the associated maximal operator by
$$M_\Phi f(x)=\sup_{Q\ni x} \|f\|_{\Phi,Q}.$$
There is also a dyadic version:
$$M^d_\Phi f(x)=\sup_{{Q\in \D}\atop{x\in Q}} \|f\|_{\Phi,Q}.$$
For each $\alpha$, $0<\al<n$,  define the associated fractional
maximal operators by
$$M_{\Phi,\al} f(x)=\sup_{Q\ni x} |Q|^{\al/n} \|f\|_{\Phi,Q},
\quad M^d_{\Phi,\al} f(x)=\sup_{{Q\in \D}\atop{x\in Q}}|Q|^{\al/n} \|f\|_{\Phi,Q}.$$
When $\Phi(t)=t\log(e+t)$ we will replace the subscript $\Phi$ with
$L\log L$; when $\Phi(t)\approx e^t$ we will replace the subscript
with $\exp L$. 

\medskip

As we noted in the Introduction, Young functions play an important
role in generalizing the $A_p$ condition to prove two-weight norm
inequalities.  Central to this are Young functions that satisfy the
following growth condition.  

\begin{definition} \label{defn:Bpcond}
For each $p$, $1<p<\infty$, a Young function $\Phi$ is said to belong to $B_p$ if for some $c>0$,
\begin{equation}\label{Bpcond} \int_c^\infty \frac{\Phi(t)}{t^p}\frac{dt}{t}<\infty.\end{equation}
\end{definition}

The next three results depend on the $B_p$ condition and will be used
in the proofs of our main results.  We start with a characterization
of $B_p$ in terms of the Orlicz maximal function due to
P\'erez \cite{P2}.

\begin{theorem} \label{Bpmax} 
For all $p$, $1<p<\infty$, $M_\Phi:L^p(\R^n)\ra L^p(\R^n)$ if and only if $\Phi\in B_p$.
\end{theorem}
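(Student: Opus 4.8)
The plan is to prove both directions of the equivalence by comparing the Orlicz maximal operator $M_\Phi$ with the ordinary Hardy--Littlewood maximal operator after a change of scale, exploiting the fact that for $\Phi(t)=t^p$ the operator $M_\Phi$ is just $(M(|f|^p))^{1/p}$, which is bounded on $L^p$. The core of the matter is that the $B_p$ integrability condition \eqref{Bpcond} is precisely what is needed to pass from $\Phi$ to $t^p$ in a quantitatively controlled way on each cube.

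\emph{Sufficiency ($\Phi\in B_p\Rightarrow M_\Phi$ bounded on $L^p$).} First I would reduce to estimating the distribution function of $M_\Phi f$. By homogeneity we may assume $\|f\|_{L^p}=1$. Fix $\lambda>0$; on the level set $\{M_\Phi f>\lambda\}$ one can select, via a Calder\'on--Zygmund / Vitali covering argument, a family of essentially disjoint cubes $\{Q_j\}$ with $\|f\|_{\Phi,Q_j}>\lambda$, i.e.\ $\dashint_{Q_j}\Phi(|f|/\lambda)\,dx>1$. The key step is then the pointwise-on-cubes comparison: because $\Phi$ is convex with $\Phi(0)=0$, on the set where $|f|\le \lambda$ the contribution of $\Phi(|f|/\lambda)$ is controlled, and one splits $f=f\chi_{\{|f|>\lambda\}}+f\chi_{\{|f|\le\lambda\}}$ to localize to where $f$ is large. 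Summing $|Q_j|\le \int_{Q_j}\Phi(|f|/\lambda)\,dx$ and integrating in $\lambda$ against $p\lambda^{p-1}\,d\lambda$ converts the bound into
\[
\|M_\Phi f\|_{L^p}^p \le c\int_{\R^n}\int_0^{|f(x)|}\frac{\Phi(|f(x)|/\lambda)}{\lambda}p\lambda^{p-1}\,d\lambda\,dx,
\]
and the inner integral, after the substitution $s=|f(x)|/\lambda$, is exactly $|f(x)|^p\int_1^\infty \Phi(s)s^{-p}\,\frac{ds}{s}$, which is finite precisely by the $B_p$ condition. This yields $\|M_\Phi f\|_{L^p}\le c\|f\|_{L^p}$.

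\emph{Necessity ($M_\Phi$ bounded on $L^p\Rightarrow \Phi\in B_p$).} Here I would test the inequality on carefully chosen functions. On a fixed cube $Q_0$ (say the unit cube), take $f=f_\lambda$ to be a suitable rearrangement or a sum of bumps designed so that $\|f_\lambda\|_{\Phi,Q}$ is comparable to $\lambda$ on a subcube of definite proportion, while $\|f_\lambda\|_{L^p}^p$ can be computed directly. Letting the relevant parameter range, the assumed $L^p$ bound for $M_\Phi$ forces a uniform bound on a dyadic-type sum $\sum_k \Phi(2^k)2^{-kp}$, which is comparable to the integral $\int_c^\infty \Phi(t)t^{-p}\,\frac{dt}{t}$ by monotonicity of $\Phi$; hence $\Phi\in B_p$. (This direction is essentially a duality/testing argument and can also be phrased by testing against characteristic functions of level sets of a single $f$ and using that $M_\Phi$ dominates $\|f\|_{\Phi,Q}$.)

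\emph{Main obstacle.} The delicate point is the sufficiency direction: making the split of $f$ at height $\lambda$ interact correctly with the Luxemburg norm $\|\cdot\|_{\Phi,Q}$, since unlike an $L^p$ average the Luxemburg norm is not additive and one must use convexity of $\Phi$ (and $\Phi(0)=0$, which gives $\Phi(\theta t)\le\theta\Phi(t)$ for $\theta\le1$) to absorb the small part of $f$ and reduce to a genuinely pointwise bound $|Q_j|\le\int_{Q_j}\Phi(|f|/\lambda)\,dx$. Once that reduction is in place the Fubini computation and the appearance of the $B_p$ integral are routine. Since this theorem is quoted from P\'erez~\cite{P2}, I would in fact cite it and only sketch this argument.
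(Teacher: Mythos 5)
The paper does not prove this theorem but cites it from P\'erez~\cite{P2}, so there is no in-paper argument to compare against; your sufficiency sketch is essentially P\'erez's standard argument (see also~\cite[Chapter~5]{CMP4}): a Calder\'on--Zygmund stopping-time decomposition of $\{M^d_\Phi f>\lambda\}$ into maximal dyadic cubes $Q_j$ with $\|f\|_{\Phi,Q_j}>\lambda$, the Luxemburg-norm inequality $|Q_j|\le\int_{Q_j}\Phi(|f|/\lambda)\,dx$, truncation of $f$ at height $\lambda$ so that the ensuing $\lambda$-integral converges, and Fubini to identify $\int_1^\infty\Phi(s)\,s^{-p}\,ds/s$ as the constant. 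Two corrections. First, your displayed estimate carries a spurious factor of $1/\lambda$: it should read
\begin{equation*}
\|M_\Phi f\|_{L^p}^p\le c\,p\int_{\R^n}\int_0^{|f(x)|}\lambda^{p-1}\,\Phi(|f(x)|/\lambda)\,d\lambda\,dx,
\end{equation*}
and then the substitution $s=|f(x)|/\lambda$ produces $c\,p\,\|f\|_{L^p}^p\int_1^\infty\Phi(s)\,s^{-p}\,ds/s$ as you assert; with the extra $1/\lambda$ the same substitution would yield only $|f|^{p-1}$, which is dimensionally wrong. Second, the truncated small part of $f$ does not need to ``interact with the Luxemburg norm'' via convexity: since $\|\chi_Q\|_{\Phi,Q}=1/\Phi^{-1}(1)$ for every cube $Q$, one has the pointwise bound $M_\Phi(f\chi_{\{|f|\le\lambda\}})\le\lambda/\Phi^{-1}(1)$, so $\{M_\Phi f>(1+1/\Phi^{-1}(1))\lambda\}\subseteq\{M_\Phi(f\chi_{\{|f|>\lambda\}})>\lambda\}$ and only the large part ever enters the covering argument.

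For the necessity your construction (``a sum of bumps'' producing a dyadic sum $\sum_k\Phi(2^k)2^{-kp}$) is left vague; a single test function already works and is cleaner. Take $f=\chi_{B(0,1)}$: for $|x|\ge 2$ any cube containing both $x$ and $B(0,1)$ has measure at most $c_n|x|^n$, so $M_\Phi f(x)\ge 1/\Phi^{-1}(c_n|x|^n)$. The assumed $L^p$ bound then forces $\int_c^\infty\Phi^{-1}(t)^{-p}\,dt<\infty$, and the change of variables $t=\Phi(s)$ together with the convexity inequality $\Phi'(s)\ge\Phi(s)/s$ shows this integral dominates $\int_{c'}^\infty\Phi(s)\,s^{-p}\,ds/s$, i.e.\ $\Phi\in B_p$. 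Your multi-scale version is not wrong, but it would need to be spelled out, whereas the single-function test collapses the argument to one computation.
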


We next give sufficient, $A_p$ bump conditions for two-weight
inequalities for the operators $M_\Phi$, $T^d$, and $T_*^d$.

\begin{theorem}[\cite{P2}] \label{LlogL}
Given $p$, $1<p<\infty$, let $\Phi,\Psi,$ and $\Theta$ be Young
functions such that $\Psi\in B_p$ and which satisfy
$\Phi^{-1}(t)\Psi^{-1}(t)\leq c\Theta^{-1}(t)$ for $t\geq t_0>0$.  If $(u,v)$ is a pair of weights such that 
$$\sup_Q \|u^{1/p}\|_{p,Q}\|v^{-1/p}\|_{\Phi,Q}<\infty,$$
then for every $f\in L^p(v)$,
$$\|M_\Theta f\|_{L^p(u)} \leq c\|f\|_{L^p(v)}.$$
\end{theorem}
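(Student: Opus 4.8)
The plan is to run a Calderón--Zygmund stopping-time argument that linearizes the integral $\int(M_\Theta f)^p\,u$ over a sparse family of cubes, and then to close the estimate using the generalized Hölder inequality (Lemma~\ref{GenHolder}), the bump hypothesis, and the $B_p$ characterization of the Orlicz maximal operator (Theorem~\ref{Bpmax}).

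First I would reduce to a dyadic operator: $M_\Theta f$ is pointwise dominated by a finite sum $c\sum_i M^{\D_i}_\Theta f$ of Orlicz maximal operators over shifted dyadic grids $\D_i$, and since the supremum in the hypothesis runs over \emph{all} cubes it in particular controls the cubes of every $\D_i$; so it suffices to bound $M^d_\Theta$ on a fixed grid $\D$. Fix $f\in L^p(v)$; by density we may assume $f$ is bounded with compact support, so all the Orlicz averages below are finite. For each $k\in\Z$ let $\{Q^k_j\}_j\subset\D$ be the maximal dyadic cubes with $\|f\|_{\Theta,Q}>2^k$. For fixed $k$ these are pairwise disjoint with union $\{M^d_\Theta f>2^k\}$, and the sets $E^k_j:=Q^k_j\setminus\bigcup_i Q^{k+1}_i$ are pairwise disjoint across all $k,j$ and satisfy $|E^k_j|\ge\tfrac12|Q^k_j|$ by maximality. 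Decomposing by level sets,
\[ \int_{\R^n}(M^d_\Theta f)^p\,u\,dx\le c\sum_{k,j}2^{kp}u(Q^k_j)\le c\sum_{k,j}\|f\|_{\Theta,Q^k_j}^p\,u(Q^k_j). \]

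Next comes the heart of the argument. Writing $f=v^{-1/p}\cdot(fv^{1/p})$ and applying Lemma~\ref{GenHolder} with $\Phi^{-1}(t)\Psi^{-1}(t)\le c\,\Theta^{-1}(t)$ gives $\|f\|_{\Theta,Q}\le c\,\|v^{-1/p}\|_{\Phi,Q}\,\|fv^{1/p}\|_{\Psi,Q}$. Since $\|u^{1/p}\|_{p,Q}^p=u(Q)/|Q|$, the bump hypothesis $\sup_Q\|u^{1/p}\|_{p,Q}\|v^{-1/p}\|_{\Phi,Q}=:K<\infty$ yields $\|v^{-1/p}\|_{\Phi,Q}^p\,u(Q)\le K^p|Q|$, and hence $\|f\|_{\Theta,Q}^p\,u(Q)\le cK^p|Q|\,\|fv^{1/p}\|_{\Psi,Q}^p$. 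Using this on each $Q^k_j$, then the sparseness $|Q^k_j|\le 2|E^k_j|$, the bound $\|fv^{1/p}\|_{\Psi,Q^k_j}\le\inf_{E^k_j}M^d_\Psi(fv^{1/p})$, and the disjointness of the $E^k_j$,
\[ \sum_{k,j}\|f\|_{\Theta,Q^k_j}^p\,u(Q^k_j)\le cK^p\sum_{k,j}\int_{E^k_j}M^d_\Psi(fv^{1/p})^p\,dx\le cK^p\int_{\R^n}M^d_\Psi(fv^{1/p})^p\,dx. \]
Since $M^d_\Psi\le M_\Psi$ and $\Psi\in B_p$, Theorem~\ref{Bpmax} bounds $M^d_\Psi$ on $L^p(\R^n)$, so the last integral is $\le c\int_{\R^n}|fv^{1/p}|^p\,dx=c\|f\|_{L^p(v)}^p$. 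Chaining the estimates gives $\|M_\Theta f\|_{L^p(u)}\le c\|f\|_{L^p(v)}$.

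The steps are individually standard; what takes the most care is the bookkeeping of the stopping family --- checking that the $Q^k_j$ (for fixed $k$) tile $\{M^d_\Theta f>2^k\}$ and that $|E^k_j|\ge\tfrac12|Q^k_j|$ --- together with the dyadic reduction, which rests on the routine fact that an Orlicz average over a cube is comparable to the average over a slightly larger dyadic cube from a shifted grid. The genuinely essential input, rather than an obstacle, is that $\Psi\in B_p$ is \emph{exactly} the hypothesis that makes the final $L^p$ bound for $M_\Psi$ available through Theorem~\ref{Bpmax}; this is precisely what forces the splitting $f=v^{-1/p}\cdot(fv^{1/p})$ and dictates where each of the three Young functions must appear.
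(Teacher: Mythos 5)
The paper does not prove Theorem~\ref{LlogL}; it cites it from P\'erez [P2]. Your argument is the standard P\'erez-style proof of that two-weight Orlicz maximal theorem: linearize the $L^p$ norm of $M_\Theta f$ over the Calder\'on--Zygmund stopping family, insert the factorization $f = v^{-1/p}\cdot(fv^{1/p})$ via the generalized H\"older inequality, absorb $u(Q)$ with the bump constant, pass to the disjoint sets $E^k_j$, and finish with the $B_p$ characterization. This is exactly the right route and the one the cited reference takes.

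However, there is a genuine gap in the one step you flag as needing care: the claim that $|E^k_j|\ge\tfrac12|Q^k_j|$ follows ``by maximality'' with the levels $2^k$ is false, and this is precisely the packing estimate that makes the sum $\sum_{k,j}|E^k_j|\cdots$ close up. The obstruction is that maximality of $Q^k_j$ only controls the Luxemburg norm over the dyadic \emph{parent} $\hat Q^k_j$, which carries a factor $|\hat Q^k_j|=2^n|Q^k_j|$. Concretely, if $\{Q^{k+1}_i\}$ are the next-generation stopping cubes inside $Q^k_j$, then $\|f\|_{\Theta,Q^{k+1}_i}>2^{k+1}$ gives $\int_{Q^{k+1}_i}\Theta\bigl(|f|/2^{k+1}\bigr)>|Q^{k+1}_i|$, and convexity of $\Theta$ (so $\Theta(t/2)\le\Theta(t)/2$) together with $\|f\|_{\Theta,\hat Q^k_j}\le 2^k$ only yields
\[
\sum_{i:\,Q^{k+1}_i\subset Q^k_j}|Q^{k+1}_i|
\le \frac{1}{2}\int_{\hat Q^k_j}\Theta\bigl(|f|/2^k\bigr)
\le \frac{1}{2}\,|\hat Q^k_j| = 2^{n-1}|Q^k_j|,
\]
which is useless already for $n\ge 1$. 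The standard (and routine) repair is to run the stopping argument at levels $a^k$ with $a\ge 2^{n+1}$ (the paper itself uses $a=4^n$ in Lemma~\ref{doublemax}); then the same computation gives $\sum_i|Q^{k+1}_i|\le (2^n/a)|Q^k_j|\le \tfrac12|Q^k_j|$, so $|E^k_j|\ge\tfrac12|Q^k_j|$ and the rest of your argument goes through verbatim with $2^{kp}$ replaced by $a^{kp}$. With this change the proof is complete and matches the expected argument.
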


\begin{theorem}[\cite{CMP3}] \label{2wdyad}
Let $T^d$ be a dyadic singular integral operator of order $\tau$, and
let $T^d_*$ be the associated maximal dyadic singular integral
operator.  Given $p$,  $1<p<\infty$, and Young functions  $\Phi,\,
\Psi$ 
such that $\bar{\Phi}\in B_{p'}$ and $\bar{\Psi}\in B_p$, if the pair
of weights $(u,v)$ satisfies
\begin{equation} \sup_Q\|u^{1/p}\|_{\Phi,Q}\|v^{-1/p}\|_{\Psi,Q}<\infty,\end{equation}
then for any $f\in L^p(v)$,
$$\|T^df\|_{L^p(u)}\leq c\|f\|_{L^p(v)}$$
and
$$\|T^d_*f\|_{L^p(u)}\leq c\|f\|_{L^p(v)}.$$
\end{theorem}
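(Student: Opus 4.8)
The plan is to reduce the two-weight inequality for $T^d$ to a pointwise or testing-type estimate that can be handled by the generalized H\"older inequality together with the $B_p$ characterization of the Orlicz maximal operator (Theorem~\ref{Bpmax}). The natural approach is to use the standard duality/Sawyer-type decomposition for dyadic operators: by duality it suffices to estimate $\langle T^d f, g\rangle$ for $f\in L^p(v)$ and $g\in L^{p'}(u)$, i.e. to bound $\sum_{Q\in\D}|\langle f,h_Q\rangle|\,|\langle g,g_Q\rangle|$ (after moving one weight into each factor). Using properties (i)--(iii) of the dyadic singular integral one controls $|\langle f,h_Q\rangle|\lesssim |Q|^{1/2}\,a_{|f|}(Q^{(\tau)})$ and similarly for $g$, so the sum is dominated by $\sum_Q |Q|\, a_{|f|}(Q^{(\tau)})\, a_{|g|}(Q^{(\tau)})$, where $Q^{(\tau)}$ is the $\tau$-fold dyadic parent. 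Splitting $\D$ into $2^{\tau n}$ subfamilies on which $Q\mapsto Q^{(\tau)}$ is injective, this reduces to estimating $\sum_{R} |R|\, a_{|f|}(R)\, a_{|g|}(R)$ over the images $R=Q^{(\tau)}$.

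Next I would linearize the sum by a Carleson/stopping-time argument (the standard ``principal cubes'' construction): form sparse families with respect to $|f|v^{-1/p}\cdot v^{1/p}$ and $|g|u^{-1/p'}\cdot u^{1/p'}$ so that the sum collapses to $\sum_{R\in\mathcal S}|R|\,A_R\,B_R$ with $A_R\approx \|fv^{-1/p}\|$-type average and $B_R$ the dual average, and the sparseness lets one pass to $\int M(\cdots)M(\cdots)$-type quantities. On each $R$ I would insert $v^{1/p}v^{-1/p}$ and $u^{1/p'}u^{-1/p'}$ and apply the generalized H\"older inequality (Lemma~\ref{GenHolder}) in the form $a_{|fg|}(R)\le c\|f\|_{\Psi,R}\|g\|_{\bar\Psi,R}$ and the analogous splitting with $\Phi,\bar\Phi$, so that the $A_p$-bump hypothesis $\sup_Q\|u^{1/p}\|_{\Phi,Q}\|v^{-1/p}\|_{\Psi,Q}<\infty$ pulls out as a constant, leaving behind averages of $|f|v^{1/p}$ measured in $\bar\Psi$ and of $|g|u^{1/p'}$ measured in $\bar\Phi$. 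Summing over the sparse family and using sparseness to dominate by an $L^p$--$L^{p'}$ pairing, one is left with $\|M_{\bar\Psi}(|f|v^{1/p})\|_{L^p}\,\|M_{\bar\Phi}(|g|u^{1/p'})\|_{L^{p'}}$ (up to harmless weight factors that cancel); here the hypotheses $\bar\Psi\in B_p$ and $\bar\Phi\in B_{p'}$ are exactly what make these two maximal operators bounded by Theorem~\ref{Bpmax}, yielding $\lesssim\|f\|_{L^p(v)}\|g\|_{L^{p'}(u)}$.

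The hard part will be organizing the stopping-time/Carleson argument so that the localization of the $\tau$-fold parents interacts cleanly with the sparse family, and keeping track of the constant's dependence on $\tau$ --- as the authors note in Remarks~\ref{exp1} and~\ref{exp2}, this is where the exponential-in-$\tau$ loss enters, since one effectively iterates an estimate over $\tau$ generations or sums a geometric-type series in the number of subfamilies. For the maximal truncated operator $T^d_*$, the plan is to run the same argument after first linearizing the supremum (choosing for each $x$ a measurable scale $\ell(x)$ realizing the sup and restricting the sum over $Q$ to $|Q|\ge 2^{n\ell(x)}$); the restriction only removes cubes from the sums above, so every estimate is inherited verbatim and the same bound follows. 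A clean alternative for the $T^d$ part is to cite the abstract principle that a sparse/positive-dyadic bound plus $A_p$-bump conditions gives the two-weight inequality, but since the statement is attributed to~\cite{CMP3} I would follow that route and simply reproduce the sparse-domination-plus-H\"older-plus-$B_p$ scheme outlined here.
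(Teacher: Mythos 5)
The paper gives no proof of Theorem~\ref{2wdyad}; it is quoted from \cite{CMP3}, and the technique used there is the same Lerner local-mean-oscillation decomposition deployed in Sections~\ref{section:local-mean}--\ref{section:proof-main-sio} for $[b,T^d]$: one estimates $\omega_\lambda(T^d f,Q)\lesssim a_{|f|}(Q^\tau) $, invokes Theorem~\ref{lerndec} to obtain a pointwise bound $|T^d f(x)-m_{T^d f}(Q_0)|\lesssim M^d f(x)+\sum_{j,k}a_{|f|}(P_j^k)\chi_{Q_j^k}(x)$ with a \emph{sparse} family $\{Q_j^k\}$, and only then pairs against $u^{1/p}h$, applies the generalized H\"older inequality on each cube, and finishes with $M_{\bar\Phi},M_{\bar\Psi}$ via Theorem~\ref{Bpmax}.

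Your plan shares the endgame (sparse family $\to$ Orlicz--H\"older $\to$ $B_p$) but the opening reduction has a genuine gap. Bounding $|\langle f,h_Q\rangle|\le |Q|^{1/2}a_{|f|}(Q)$ and $|\langle g,g_Q\rangle|\le|Q|^{1/2}a_{|g|}(Q)$ throws away all the cancellation that makes $T^d$ a singular integral rather than a positive operator, and the resulting majorant
\[
\sum_{Q\in\mathcal D}|Q|\,a_{|f|}(Q)\,a_{|g|}(Q)
\]
is simply infinite in general: take $f=g=\chi_{[0,1]}$ in one dimension, so each of the $2^k$ dyadic subcubes of $[0,1]$ at generation $k$ contributes $2^{-k}\cdot1\cdot1$, and the generations sum to $+\infty$. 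No stopping-time or Carleson argument can rescue a divergent starting sum, because the cubes in $\mathcal D$ cannot all be organized into a sparse family. To proceed this way you would have to first write $\langle f,h_Q\rangle=\langle f-a_f(\hat Q),h_Q\rangle$ using $\int h_Q=0$, i.e. replace the average $a_{|f|}(Q)$ by a local \emph{oscillation} of $f$, which is exactly the martingale/telescoping structure that the Lerner decomposition packages for you. The same objection applies to your $T^d_*$ argument, since you propose to inherit the same estimates. (Your substitution of $Q^{(\tau)}$ for $Q$ in the Haar-coefficient bound is also unjustified as written---support of $h_Q$ gives $Q$, and the $\tau$-fold parent appears only after one invokes the mean-zero/constancy structure, precisely what you are omitting.) So the proposal, while pointing at the correct closing steps, is missing the key idea that makes the sum summable.
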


The next two norm inequalities will also be used below.  The first is
due to Yano; for a proof, see Zygmund~\cite{zygmund}.

\begin{theorem} \label{thm:yano}
Given a sub-linear operator $S$ that is bounded
  on $L^p(\R^n)$ for $1<p\leq p_0$, suppose that given any set
  $\Omega$ and $f$ such that $\supp(f)\subset \Omega$, 
$$\left(\dashint_\Omega|Sf(x)|^p\, dx\right)^{1/p} 
\leq \frac{c}{p-1}\left(\dashint_\Omega |f(x)|^p\,dx\right)^{1/p}.$$
then
$$\dashint_\Omega |Sf(x)|\, dx \leq c \|f\|_{L\log L,\Omega}.$$
\end{theorem}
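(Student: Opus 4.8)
The plan is to derive the $L\log L$ bound by summing the $L^p$ bounds over a dyadic decomposition of $f$ according to the size of $|f|$, exploiting the fact that the $L^p$ operator norm blows up only like $(p-1)^{-1}$ as $p\to 1^+$. First I would normalize: by homogeneity it suffices to prove the inequality when $\|f\|_{L\log L,\Omega}\le 1$, which by the definition of the Luxemburg norm means $\dashint_\Omega |f|\log(e+|f|)\,dx\le 1$; the goal then becomes $\dashint_\Omega |Sf|\,dx\le c$. Write $f=\sum_{k\ge 0} f_k$ where $f_0=f\chi_{\{|f|\le 2\}}$ and $f_k=f\chi_{\{2^k<|f|\le 2^{k+1}\}}$ for $k\ge 1$; each $f_k$ is supported in $\Omega$, so each satisfies the hypothesized localized $L^p$ estimate.

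Next I would apply the localized hypothesis with a cleverly chosen exponent $p_k$ for each piece. For $k\ge 1$ set $p_k = 1 + 1/k$, so that $p_k-1 = 1/k$ and the constant $c/(p_k-1) = ck$. Using the normalized $L^{p_k}$ estimate and then Hölder (or just the trivial bound $|f_k|\le 2^{k+1}$ on its support together with control of $|\{|f|>2^k\}|$),
\begin{equation*}
\dashint_\Omega |Sf_k|\,dx \le \left(\dashint_\Omega |Sf_k|^{p_k}\,dx\right)^{1/p_k}
\le ck\left(\dashint_\Omega |f_k|^{p_k}\,dx\right)^{1/p_k}.
\end{equation*}
Since $|f_k|\approx 2^k$ on its support, $\left(\dashint_\Omega|f_k|^{p_k}\right)^{1/p_k}\lesssim 2^k\left(|E_k|/|\Omega|\right)^{1/p_k}$ where $E_k=\{|f|>2^k\}$, and crudely $(|E_k|/|\Omega|)^{1/p_k}\le (|E_k|/|\Omega|)^{1/2}\cdot(\text{bounded factor})$; more usefully, Chebyshev gives $|E_k|/|\Omega|\lesssim 2^{-k}k^{-1}$ from the normalization $\dashint_\Omega |f|\log(e+|f|)\le 1$ (since $|f|\log(e+|f|)\gtrsim 2^k k$ on $E_k$ for large $k$). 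Feeding this in, the $k$-th term is bounded by $ck\cdot 2^k\cdot(2^{-k}k^{-1})^{1/p_k} = ck^{1-1/p_k}2^{k(1-1/p_k)}$; with $1-1/p_k = 1/(k+1)$ this is $ck^{1/(k+1)}2^{k/(k+1)}\le Ck^{1/(k+1)}\cdot 2$, which is bounded but does \emph{not} sum. This tells me the crude split is too lossy, and the real argument needs the geometric decay to come from a sharper choice — e.g.\ testing $f_k$ at an exponent $p_k$ bounded away from $1$ for small $k$ and tending to $1$ only slowly, or summing $\|Sf_k\|_{L^1}$ against $\|f_k\|_{L^1}\le 2\,|E_k|/|\Omega|\lesssim 2^{-k}$ after using the $L^{p_0}$ bound and interpolation to trade integrability for decay in $k$.

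The cleanest route, which I would actually write up, is: use the fixed exponent $p=p_0$ on the hypothesis is not enough, so instead for each $k$ apply the hypothesis at $p=1+\tfrac{1}{k+2}$ but estimate $\dashint_\Omega|f_k|^{p_k}$ by splitting off one factor of $|f_k|$ and bounding the rest by $|f_k|^{p_k-1}\le (2^{k+1})^{p_k-1}=2^{(k+1)/(k+2)}\le 2$, giving $\dashint_\Omega |f_k|^{p_k}\le 2\dashint_\Omega|f_k|$; hence $\left(\dashint_\Omega|f_k|^{p_k}\right)^{1/p_k}\le (2\dashint_\Omega|f_k|)^{1/p_k}\le 2\dashint_\Omega|f_k|+c$ is again too weak — so the genuinely correct bookkeeping is to keep the full power: $\left(\dashint_\Omega|f_k|^{p_k}\right)^{1/p_k}\le 2\left(\dashint_\Omega|f_k|\right)^{1/p_k}$, and then sum $\sum_k (k+2)\left(\dashint_\Omega|f_k|\right)^{1/p_k}$ using $\dashint_\Omega|f_k|\le 2^{-k}$ (from $\sum 2^k|E_k|/|\Omega|\le\sum\dashint|f|\chi_{E_k}\lesssim\dashint|f|\log(e+|f|)\le 1$) so that $(k+2)(2^{-k})^{(k+2)/(k+3)}\le (k+2)2^{-k+1}$, which \emph{does} sum to a constant. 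So the key steps are: (1) normalize via homogeneity to $\|f\|_{L\log L,\Omega}\le1$; (2) dyadically decompose $f$ by level sets; (3) on the $k$-th piece apply the hypothesis with $p_k=1+\tfrac1{k+2}$; (4) bound $\dashint_\Omega|f_k|^{p_k}$ by $2\dashint_\Omega|f_k|$ using $|f_k|\lesssim 2^k$ and $2^{k(p_k-1)}$ bounded; (5) use the $L\log L$ normalization to get $\sum_k k\dashint_\Omega|f_k|<\infty$, hence $\sum_k(k+2)(\dashint_\Omega|f_k|)^{1/p_k}<\infty$. I expect step (4)–(5), the quantitative balancing of the $(p_k-1)^{-1}=k+2$ loss against the $2^{-k}$-type decay coming from the $L\log L$ control of the tails, to be the main obstacle: one must choose the exponents $p_k$ so that the operator-norm growth is exactly compensated, and this is precisely where the logarithm in $\Phi(t)=t\log(e+t)$ is used.
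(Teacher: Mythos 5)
Your overall strategy---dyadic decomposition of $f$ by level sets, applying the hypothesis at variable exponents $p_k\to 1$, and balancing the $(p_k-1)^{-1}$ blow-up against the decay of the pieces forced by the $L\log L$ normalization---is the classical Yano argument from Zygmund, which is what the paper cites, so the route is right. But the crucial quantitative step is flawed. You assert $\dashint_\Omega|f_k|\,dx\le 2^{-k}$, deducing it from $\sum_k 2^k|E_k|/|\Omega|\lesssim 1$. That inference is a non sequitur (a convergent series does not control any one term), and the claimed pointwise-in-$k$ bound is simply false: take $f$ equal to $2^{K+1}$ on a subset of $\Omega$ of relative measure $\approx(2^K K)^{-1}$ and $0$ elsewhere; then $\dashint_\Omega|f|\log(e+|f|)\approx 1$ while $\dashint_\Omega|f_K|\approx 1/K$, which for large $K$ is enormously bigger than $2^{-K}$. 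What the normalization actually yields is the summed estimate $\sum_k(k+1)\dashint_\Omega|f_k|\,dx\lesssim 1$ (equivalently the individual bound $\dashint_\Omega|f_k|\lesssim 1/(k+1)$), which is precisely what you record in your step (5).

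The consequence is that the ``hence'' in step (5)---passing from $\sum_k(k+1)a_k<\infty$ to $\sum_k(k+2)a_k^{1/p_k}<\infty$ with $a_k=\dashint_\Omega|f_k|$ and $1/p_k=(k+2)/(k+3)$---is not automatic and is exactly where the work of the proof lies; your write-up supplies no argument for it beyond the false $a_k\le 2^{-k}$. It is true, but one must split cases. Write $a_k^{1/p_k}=a_k\cdot a_k^{-1/(k+3)}$ and note $0\le a_k\le 1$. If $a_k\ge 4^{-(k+3)}$ then $a_k^{-1/(k+3)}\le 4$, so $(k+2)a_k^{1/p_k}\le 4(k+2)a_k$, which sums by step (5). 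If $a_k<4^{-(k+3)}$ then $a_k^{1/p_k}=a_k^{(k+2)/(k+3)}<4^{-(k+2)}$, so $(k+2)a_k^{1/p_k}<(k+2)4^{-(k+2)}$, which sums geometrically. Inserting this case split (and treating the bounded piece $f\chi_{\{|f|\le 2\}}$ directly at $p=p_0$) turns your outline into a proof; as written, the balancing you correctly identify as the crux is asserted rather than carried out.
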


It follows immediately from Marcinkiewicz interpolation that we can
take $S$ to be any operator that is bounded on $L^2(\R^n)$ and is weak
$(1,1)$.

The next result is a weak $(p,p)$ inequality for $M_{L\log
  L,\alpha}$.  It was proved in \cite[Proposition 5.16]{CMP4} for $\alpha=0$; the proof
for $\alpha>0$ is essentially the same.  For completeness we sketch
the details.

\begin{theorem} \label{thm:orlicz-weak}
Given $\alpha$, $0\leq\alpha<n$, and $p$, $1<p<n/\alpha$, if the pair
$(u,v)$ satisfies
\[ \sup_Q |Q|^{\al/n}\|u^{1/p}\|_{p,Q}\|v^{-1/p}\|_{B,Q} <\infty, \]
where $B(t)=t^{p'}\log(e+t)^{p'}$, then 
\[ u(\{ x\in \R^n : M_{L\log L,\al}f(x) > \lambda \}) 
\leq \frac{c}{\lambda^p}\int_{\R^n}|f(x)|^p v(x)\,dx. \]
\end{theorem}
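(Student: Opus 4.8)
The plan is to prove the weak-type inequality for $M_{L\log L,\alpha}$ by a Calderón–Zygmund decomposition argument, following the structure of \cite[Proposition 5.16]{CMP4}. The key observation is that $M_{L\log L,\alpha}$ is pointwise comparable to an iterated fractional maximal operator: since $L\log L$ is the Orlicz space associated to the second iterate of the Hardy–Littlewood maximal operator, we have $M_{L\log L,\alpha}f(x) \approx M_\alpha(Mf)(x)$ up to constants depending only on $n$. More precisely, for each cube $Q$, $\|f\|_{L\log L,Q} \leq c\, \dashint_Q Mf(y)\,dy$ (this is the standard local estimate), so $M_{L\log L,\alpha}f(x) \leq c\, M_\alpha(M^d f)(x)$ after passing to a suitable dyadic-type maximal operator, and conversely the reverse comparison holds up to a dilation of cubes.

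First I would reduce to the dyadic setting: by the usual three-lattice trick (or by working with one shifted dyadic grid at a time) it suffices to prove the estimate with $M_{L\log L,\alpha}$ replaced by its dyadic analogue $M^d_{L\log L,\alpha}$. Then I would fix $\lambda > 0$ and form the Calderón–Zygmund stopping cubes for the function $M^d f$ at height $\lambda^{1/(\text{something})}$ — actually, the cleaner route is to perform a single stopping-time decomposition directly adapted to the sublevel sets of $M^d_{L\log L,\alpha}$. Let $\{Q_j\}$ be the maximal dyadic cubes for which $|Q_j|^{\alpha/n}\|f\|_{L\log L,Q_j} > \lambda$; by maximality the set $\{M^d_{L\log L,\alpha}f > \lambda\}$ is exactly $\bigcup_j Q_j$, and $u(\{M^d_{L\log L,\alpha}f > \lambda\}) = \sum_j u(Q_j)$. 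On each $Q_j$ the stopping condition gives $\lambda < |Q_j|^{\alpha/n}\|f\|_{L\log L,Q_j}$, and I would then estimate $\|f\|_{L\log L,Q_j}$ from above using the generalized Hölder inequality (Lemma~\ref{GenHolder}) to split off the weight $v^{-1/p}$: writing $f = f\cdot v^{1/p}\cdot v^{-1/p}$ and choosing the Young function on the $f v^{1/p}$ factor so that, against $\|v^{-1/p}\|_{B,Q_j}$, the product controls $\|f\|_{L\log L,Q_j}$. The precise choice is that if $B(t) = t^{p'}\log(e+t)^{p'}$ then $\bar B(t) \approx t^p/\log(e+t)^{?}$, and one checks $\bar B^{-1}(t)\cdot (t^p)^{-1/p}\lesssim (L\log L)^{-1}(t)$ type inequality so that $\|f\|_{L\log L,Q_j} \leq c\,\|fv^{1/p}\|_{p,Q_j}\|v^{-1/p}\|_{B,Q_j}$ — this is precisely the Hölder splitting that makes the hypothesis on $(u,v)$ enter.

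Combining, on each $Q_j$ we get $\lambda < c\,|Q_j|^{\alpha/n}\|fv^{1/p}\|_{p,Q_j}\|v^{-1/p}\|_{B,Q_j}$, and multiplying and dividing by $\|u^{1/p}\|_{p,Q_j}$ and invoking the testing hypothesis $\sup_Q|Q|^{\alpha/n}\|u^{1/p}\|_{p,Q}\|v^{-1/p}\|_{B,Q}<\infty$, we obtain
\[
\lambda\,\|u^{1/p}\|_{p,Q_j} \leq c\,\|fv^{1/p}\|_{p,Q_j} = c\left(\dashint_{Q_j}|f(x)|^p v(x)\,dx\right)^{1/p}.
\]
Raising to the $p$th power, $\lambda^p u(Q_j) \leq c\int_{Q_j}|f|^p v\,dx$, and then summing over $j$: because the $Q_j$ are the maximal stopping cubes they are pairwise disjoint, so $\sum_j \int_{Q_j}|f|^p v\,dx \leq \int_{\R^n}|f|^p v\,dx$, which gives exactly the claimed bound $u(\{M_{L\log L,\alpha}f>\lambda\}) \leq c\lambda^{-p}\int |f|^p v$.

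**The main obstacle** I expect is the Young-function bookkeeping in the Hölder splitting step — verifying that with $B(t)=t^{p'}\log(e+t)^{p'}$ the complementary function pairs correctly so that $\|f\|_{L\log L,Q}\lesssim \|fv^{1/p}\|_{p,Q}\|v^{-1/p}\|_{B,Q}$, i.e.\ checking an inequality of the form $C^{-1}(t)\,\bar B^{-1}(t) \lesssim (t^{p'}\log(e+t)^{p'})^{-1}$-type relation where $C$ is chosen so that $\|fv^{1/p}\|_{C,Q} = \|fv^{1/p}\|_{p,Q}$; the exponent $p'$ on the logarithm in $B$ is exactly what is needed here and the computation, while routine, is where all the sharpness of the condition lives. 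The passage from $\alpha = 0$ to $\alpha > 0$ introduces no genuine difficulty: the extra factor $|Q_j|^{\alpha/n}$ is simply carried along unchanged through the stopping-time argument and absorbed by the corresponding factor in the testing hypothesis, which is why the proof is, as stated, essentially the same as the $\alpha=0$ case in \cite{CMP4}.
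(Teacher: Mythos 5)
Your proof is correct and follows essentially the same route as the paper's: a Calder\'on--Zygmund stopping-time decomposition adapted to $M_{L\log L,\alpha}$, the generalized H\"older splitting $\|f\|_{L\log L,Q}\leq c\|fv^{1/p}\|_{p,Q}\|v^{-1/p}\|_{B,Q}$ (which holds because $B^{-1}(t)\,t^{1/p}\leq c\,\Phi^{-1}(t)$ with $\Phi(t)=t\log(e+t)$ --- note the correct check involves $B^{-1}$, not $\bar B^{-1}$ as in your sketch), the testing hypothesis, and disjointness of the stopping cubes. The only cosmetic difference is that the paper applies P\'erez's Orlicz CZ decomposition directly to $M_{L\log L,\alpha}$, obtaining disjoint $Q_j$ with $\{M_{L\log L,\alpha}f>\lambda\}\subset\bigcup_j 3Q_j$, whereas you pass to the dyadic operator first so that the level set is exactly $\bigcup_j Q_j$.
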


\begin{proof}
By a variant of the Calder\'on-Zygmund decomposition for Orlicz
maximal operators (see
P\'erez~\cite{P2} and \cite{CF}), for each $\lambda>0$ there exists a
family of disjoint dyadic cubes $Q_j^\lambda$ and a constant
$\gamma>0$ such that $|Q_j^\lambda|^{\alpha/n}\|f\|_{L\log L,Q_j^\lambda}>\gamma\lambda$ and 
\[ \{ x\in \R^n : M_{L \log L,\al}f(x)>\lambda \} \subset \bigcup_j
3Q_j^\lambda. \]
If $\Phi(t)=t\log(e+t)$, then $B^{-1}(t)t^{1/p} \leq c \Phi^{-1}(t)$.
Therefore, by the generalized H\"older's inequality,
\begin{align*}
& u(\{ x\in \R^n : M_{L \log L,\al}f(x)>\lambda \} ) \\
& \qquad \leq \frac{c}{\lambda^p}\sum_j u(3Q_j^\lambda)|Q_j^\lambda|^{p\alpha/n}\|f\|_{L\log
  L, Q_j^\lambda}^p \\
& \qquad \leq \frac{c}{\lambda^p}\sum_j
|Q_j^\lambda|^{p\alpha/n}\|u^{1/p}\|_{p,3Q_j^\lambda}^p
\|v^{-1/p}\|_{B,3Q_j^\lambda}^p |3Q_j^\lambda|\|fv^{1/p}\|_{p,Q_j^\lambda}^p \\
& \qquad \leq c \int_{\R^n} |f(x)|^p v(x)\,dx.
\end{align*}
\end{proof}

\medskip

Finally, we give some special Young functions that will be used in our
proofs.  First, if $\Phi(t)=t\log(e+t)$, then a simple calculation
shows that $\bar{\Phi}(t) \approx e^t$.  We will use this to apply the
generalized H\"older's inequality.  

In Theorems~\ref{main} and~\ref{fraccomm} our hypotheses are stated in
terms of the Young functions 
\begin{align}
\label{A} A(t) & = t^p\log(e+t)^{2p-1+\delta}\\
\label{B} B(t) &=t^{p'}\log(e+t)^{2p'-1+\delta},
\end{align}
where $\delta>0$.  Closely related to these are the Young functions 
\begin{align}
\label{C} C(t)&=\frac{t^{p'}}{\log(e+t)^{1+(p'-1)\delta}}\\
\label{D} D(t)&=\frac{t^p}{\log(e+t)^{1+(p-1)\delta}}.
\end{align}

\begin{lemma} \label{Orliczmax} 
Fix $p$, $1<p<\infty$, and let $A,B,C,$ and $D$ be as in
\eqref{A},\eqref{B},\eqref{C} and \eqref{D}.
 Then $\bar{B},\, D\in B_p$ and $\bar{A},\,C\in B_{p'}$, and so 
\begin{equation*} 
M_{\bar{B}}, M_D:L^p(\R^n)\ra L^p(\R^n), \qquad M_{\bar{A}},
M_C:L^{p'}(\R^n)\ra L^{p'}(\R^n).  
\end{equation*}
Furthermore, if we let
 $\Phi(t)=t\log(e+t)$, then 
$$A^{-1}(t)C^{-1}(t)\leq c\Phi^{-1}(t) \quad \text{and}\quad B^{-1}(t)D^{-1}(t)\leq c\Phi^{-1}(t)$$
for $t\geq t_0>0$, and so for all $f,\,g$,
$$\|fg\|_{L\log L,Q}\leq c\|f\|_{A,Q}\|g\|_{C,Q}, \quad  \quad 
\|fg\|_{L\log L,Q}\leq c\|f\|_{B,Q}\|g\|_{D,Q}.$$
\end{lemma}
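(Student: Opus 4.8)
The plan is to verify each of the three families of claims separately, relying on elementary calculus estimates for the functions $t^r\log(e+t)^s$. For the $B_p$ membership claims, I would first compute the associate functions $\bar A$ and $\bar B$. Since $A(t) \approx t^p\log(e+t)^{2p-1+\delta}$, a standard computation (see \cite{CMP4}) gives $\bar A(t) \approx t^{p'}\log(e+t)^{-(p'-1)(2p-1+\delta)}$, and similarly $\bar B(t) \approx t^p\log(e+t)^{-(p-1)(2p'-1+\delta)}$. Then to check, say, $\bar A \in B_{p'}$ I would substitute into the defining integral of Definition \ref{defn:Bpcond}: the condition $\int_c^\infty \bar A(t) t^{-p'}\,\frac{dt}{t} < \infty$ reduces to $\int_c^\infty \log(e+t)^{-(p'-1)(2p-1+\delta)}\,\frac{dt}{t} < \infty$, which converges since $(p'-1)(2p-1+\delta) > 1$ (indeed $2p-1+\delta > 1$ so $(p'-1)(2p-1+\delta) > p'-1$, and one checks directly this exceeds $1$). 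The same substitution handles $C \in B_{p'}$: here $\int_c^\infty C(t) t^{-p'}\,\frac{dt}{t} = \int_c^\infty \log(e+t)^{-1-(p'-1)\delta}\,\frac{dt}{t} < \infty$ because the exponent $1 + (p'-1)\delta > 1$. The claims $\bar B, D \in B_p$ are entirely symmetric, exchanging the roles of $p$ and $p'$. The mapping properties of the four maximal operators on $L^p$ and $L^{p'}$ then follow immediately from Theorem \ref{Bpmax}.

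For the inverse-function inequalities, I would use the fact that $\Phi^{-1}(t) \approx \frac{t}{\log(e+t)}$ when $\Phi(t) = t\log(e+t)$, together with the asymptotics $A^{-1}(t) \approx \frac{t^{1/p}}{\log(e+t)^{(2p-1+\delta)/p}}$ and $C^{-1}(t) \approx t^{1/p'}\log(e+t)^{(1+(p'-1)\delta)/p'}$. Multiplying these, the power of $t$ is $\frac{1}{p} + \frac{1}{p'} = 1$, and I only need to check that the net power of $\log(e+t)$ is $\leq -1$, i.e.
\[
-\frac{2p-1+\delta}{p} + \frac{1+(p'-1)\delta}{p'} \leq -1.
\]
A short algebraic simplification reduces the left side to $-\frac{2p-1}{p} + \frac{1}{p'} + \delta\left(\frac{1}{p'} - \frac{1}{p'\cdot p'}\cdot\frac{p'-1}{1}\right)$—more cleanly, one writes everything over $pp'$ and checks that it equals $-1$ plus a nonpositive multiple of $\delta$; in fact with $\delta = 0$ the two sides are equal, and the $\delta$-term has a negative coefficient, so the inequality holds for all $\delta > 0$. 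The inequality $B^{-1}(t)D^{-1}(t) \leq c\Phi^{-1}(t)$ is the mirror image. Finally, the two generalized Hölder inequalities are immediate from Lemma \ref{GenHolder} applied with $\Theta(t) = t\log(e+t)$ and the inverse inequalities just established.

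The main obstacle is purely bookkeeping: keeping the exponents on the logarithms straight through the passage to associate functions and to inverse functions, since a factor like $\log(e+t)^s$ in $\Phi$ becomes $\log(e+t)^{-(r'-1)s}$ in $\bar\Phi$ (up to $\approx$) and $\log(e+t)^{-s/r}$ in $\Phi^{-1}$, and one must track which of $p$, $p'$ is playing the role of $r$ in each instance. There is no conceptual difficulty—every step is an application of known asymptotics for $t^r\log(e+t)^s$—but the arithmetic verification that the critical exponent lands on the correct side of $1$ (strictly, for the $B_p$ integrals) or $-1$ (non-strictly, for the Hölder inequalities) is where care is needed. I would organize the proof around a single lemma-internal computation of the form "$t^r\log(e+t)^s$ belongs to $B_\rho$ iff $\rho > r$, or $\rho = r$ and $s < -1$" (when $\delta > 0$ we are always in the first or second case with room to spare), so that all four $B_p$ claims follow by inspection of exponents.
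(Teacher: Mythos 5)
Your proposal is correct and follows essentially the same route as the paper: compute the inverse and associate functions for the log-bump Young functions $t^r\log(e+t)^s$, verify $B_p$ membership by inspecting the defining integral $\int_c^\infty \Phi(t)t^{-p}\,\frac{dt}{t}$, and establish the Hölder estimates by tracking the log-exponent in the product of inverses. The paper simply states ``straightforward calculations show that\ldots'' and lists the asymptotics for $A^{-1}$, $\bar A^{-1}$, $\bar A$, $C^{-1}$, $\bar C^{-1}$, $\bar C$, leaving $B$, $D$ by symmetry; your argument fills in what those computations are and how they feed into Definition~\ref{defn:Bpcond}, Theorem~\ref{Bpmax} and Lemma~\ref{GenHolder}, which is exactly what the paper intends.

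Two small arithmetic remarks. First, when verifying $\bar A\in B_{p'}$ you note $(p'-1)(2p-1+\delta)>p'-1$ and then say ``one checks directly this exceeds $1$''; the intermediate bound $p'-1$ is not itself $>1$ unless $p<2$, so the second clause is the one that carries the argument --- and indeed $(p'-1)(2p-1+\delta)=p'+1+(p'-1)\delta>1$ always. Second, in the $A^{-1}C^{-1}$ estimate the $\delta$-coefficient in the log-exponent is in fact exactly zero, not strictly negative: one has
\[
\frac{1+(p'-1)\delta}{p'}-\frac{2p-1+\delta}{p}
=\Bigl(\tfrac1{p'}+\tfrac1p-2\Bigr)+\delta\Bigl(\tfrac{p'-1}{p'}-\tfrac1p\Bigr)=-1+\delta\cdot 0=-1,
\]
so $A^{-1}(t)C^{-1}(t)\approx t/\log(e+t)\approx\Phi^{-1}(t)$ with comparability (not just a one-sided bound) for every $\delta>0$. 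Your earlier phrase ``$-1$ plus a nonpositive multiple of $\delta$'' is correct; the later ``negative coefficient'' is the slight overstatement. Neither affects the conclusion.
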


\begin{proof}
Straightforward calculations show that 
\begin{align*}
A^{-1}(t) &\approx \frac{t^{1/p}}{\log(e+t)^{1+1/p'+\delta/p}}\\
\bar{A}^{-1}(t)&\approx t^{1/p'}\cdot \log(e+t)^{1+1/p'+\delta/p}\\
\bar{A}(t)& \approx \frac{t^{p'}}{\log(e+t)^{p'+1+(p'-1)\delta}},
\end{align*}
and
\begin{align*}
C^{-1}(t)&\approx t^{1/p'}\cdot {\log(e+t)^{1/p'+\delta/p}}\\
\bar{C}^{-1}(t)&\approx \frac{t^{1/p'}}{\log(e+t)^{1/p'+\delta/p}}\\
\bar{C}(t)&\approx {t^{p}}\cdot{\log(e+t)^{p-1+\delta}}.
\end{align*}
Similar calculations hold for $B$ and $D$ (just exchanging the roles
of $p$ and $p'$).  The desired conclusions now follows from 
Definition~\ref{defn:Bpcond}, Lemma~\ref{GenHolder}, and Theorem~\ref{Bpmax}.
\end{proof}

\begin{remark} \label{rem:general-bump}
Since they are the principal examples, we have stated our main results in terms of Young functions $A$
and $B$ which are log bumps (i.e., of the form \eqref{A}, \eqref{B}).
However, 
we can actually prove somewhat more general results.  The key
properties we need are those given in Lemma~\ref{Orliczmax}.   Given a
Young function $A$, we will say that $C$ is its $L\log L$ associate if 
\[ A^{-1}(t)C^{-1}(t) \leq c\Phi^{-1}(t), \]
where $\Phi(t)=t\log(e+t)$.  Then we can restate the hypotheses of
Theorem~\ref{main} as follows: Given a Young function $A$ with $L\log
L$ associate $C\in B_{p'}$, and a Young function $B$ with $L\log L$
associate $D\in B_p$, if the pair $(u,v)$ satisfies~\eqref{twowcond},
then~\eqref{2weightcom} holds.  The hypotheses of
Theorem~\ref{fraccomm} may be reformulated similarly.  Details are
left to the interested reader.  Our proofs of the weak type results in
Theorems~\ref{thm:factored-sio} and~\ref{thm:factored-frac}, however,
only work for log bumps.
\end{remark}

\subsection{Bounded mean oscillation}
Let $BMO$ denote the space of functions of bounded mean oscillation:
functions $b$ such that 
$$\|b\|_{BMO}=\sup_Q \dashint_Q |b(x)-a_b(Q)|\, dx<\infty.$$
Below we will need that  $BMO$ functions  satisfy exponential
integrability conditions; this is a consequence of the John-Nirenberg
Theorem.  

\begin{theorem} \label{sharpJN}
Given $b\in BMO$, there exists a constant $c_n$ such that for every cube
$Q$, 
\begin{equation} \label{expBMO} 
\sup_Q \dashint_Q
\exp\Big(\frac{|b(x)-a_b(Q)|}{2^{n+2}\|b\|_{BMO}}\Big)\, dx \leq c_n.
\end{equation}
In particular, 
\begin{equation} \label{expBMO1}
\|b-a_b(Q)\|_{\exp L,Q} \leq c_n2^{n+2}\|b\|_{BMO}. 
\end{equation}

\end{theorem}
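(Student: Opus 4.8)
The plan is to derive both inequalities from the exponential decay of the distribution function of $b-a_b(Q)$---the substance of the John--Nirenberg theorem---and then to convert that decay into exponential integrability by a layer--cake computation, keeping track of constants so that the exponent $2^{n+2}$ emerges. By the homogeneity of $BMO$ we may assume $\|b\|_{BMO}=1$.

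First I would set, for a cube $Q$,
$$g(\lambda)=\sup_Q \frac{1}{|Q|}\bigl|\{x\in Q:|b(x)-a_b(Q)|>\lambda\}\bigr|,$$
the supremum over all cubes, and run the standard local Calderón--Zygmund iteration. Fix $Q_0$; for $t>1$ one has $\dashint_{Q_0}|b-a_b(Q_0)|\le 1<t$, so dyadically subdividing $Q_0$ yields pairwise disjoint subcubes $\{Q_j\}$ with $t<\dashint_{Q_j}|b-a_b(Q_0)|\le 2^n t$ and $|b-a_b(Q_0)|\le t$ a.e.\ on $Q_0\setminus\bigcup_j Q_j$. Hence $\sum_j|Q_j|\le t^{-1}|Q_0|$ and $|a_b(Q_j)-a_b(Q_0)|\le 2^n t$. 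Writing $|b(x)-a_b(Q_0)|\le|b(x)-a_b(Q_j)|+2^n t$ on each $Q_j$ gives the recursion
$$g(\lambda+2^n t)\le t^{-1}g(\lambda),\qquad \lambda>0.$$
Choosing $t=e$ and iterating from $\lambda=0$ (where $g(0)\le 1$) produces $g(\lambda)\le e\,e^{-\lambda/(2^n e)}$; reinstating the norm, $g(\lambda)\le e\,e^{-\lambda/(2^n e\|b\|_{BMO})}$.

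Next I would integrate. With $\beta=2^{n+2}\|b\|_{BMO}$, the layer--cake formula gives, for every cube $Q$,
$$\dashint_Q\exp\!\Bigl(\frac{|b-a_b(Q)|}{\beta}\Bigr)dx=1+\int_1^\infty\frac{|\{x\in Q:|b-a_b(Q)|>\beta\ln s\}|}{|Q|}\,ds\le 1+e\int_1^\infty s^{-\beta/(2^n e\|b\|_{BMO})}\,ds.$$
Since $\beta/(2^n e\|b\|_{BMO})=4/e>1$, the integral converges to $e/(4-e)$, so the left side is at most $c_n:=1+e^2/(4-e)$, which is \eqref{expBMO}. For \eqref{expBMO1}, take the Young function $\Phi(t)=e^t-1\approx e^t$; then \eqref{expBMO} says $\dashint_Q\Phi(|b-a_b(Q)|/\beta)\,dx\le c_n-1$, and since $\Phi$ is convex with $\Phi(0)=0$ we have $\Phi(t/s)\le s^{-1}\Phi(t)$ for $s\ge 1$, so taking $s=\max(1,c_n-1)$ yields $\dashint_Q\Phi(|b-a_b(Q)|/(s\beta))\,dx\le 1$, i.e.\ $\|b-a_b(Q)\|_{\exp L,Q}\le s\,2^{n+2}\|b\|_{BMO}$; relabeling the dimensional constant gives \eqref{expBMO1}.

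There is no conceptual obstacle here; the only point demanding care is the explicit exponent. The factor $2^n$ in the recursion is unavoidable---it is the cost of passing to the (doubling) Calderón--Zygmund cubes when controlling the drift $|a_b(Q_j)-a_b(Q_0)|$---and the choice $t=e$ maximizes the resulting decay rate $(\ln t)/(2^n t)$, which is exactly what makes $2^n e<2^{n+2}$ and forces the last integral above to converge. Any $t\in(1,4)$ would do; $t=e$ is merely the optimal one, and one sees in passing that the constant $c_n$ produced this way can even be taken independent of $n$.
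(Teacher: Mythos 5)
Your proof is correct and, unlike the paper, is self-contained: the paper simply cites Journ\'e for the exponential estimate \eqref{expBMO} and observes that \eqref{expBMO1} then follows from the definition of the Luxemburg norm, whereas you reconstruct the John--Nirenberg distribution-function decay from scratch via the Calder\'on--Zygmund stopping-time iteration and then run the layer-cake argument with enough care to isolate the constant. Two things are worth noting. First, your argument actually shows something slightly sharper than the statement: with $t=e$ the decay rate is $1/(2^{n}e\,\|b\|_{BMO})$, so any normalizing constant $\beta>2^{n}e\,\|b\|_{BMO}$ would give a finite average of $\exp(|b-a_{b}(Q)|/\beta)$; the paper's choice $\beta=2^{n+2}\|b\|_{BMO}$ is simply a clean round number above that threshold, and you rightly observe that the resulting $c_{n}$ is dimension-independent. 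Second, the parenthetical claim that ``any $t\in(1,4)$ would do'' is not accurate as stated: the requirement for the last integral to converge is $2^{n+2}\ln t/(2^{n}t)>1$, i.e.\ $4\ln t>t$, which holds roughly for $t\in(1.43,\,8.6)$ and in particular fails for $t$ near $1$. This is a side remark and does not affect the main argument, which is sound; in particular the recursion $g(\lambda+2^{n}t)\le t^{-1}g(\lambda)$, the iteration from $g(0)\le 1$, the resulting bound $g(\lambda)\le t\,e^{-\lambda\ln t/(2^{n}t)}$, the layer-cake estimate, and the convexity step $\Phi(t/s)\le s^{-1}\Phi(t)$ used to deduce the Luxemburg-norm bound are all correct.
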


A proof of inequality \eqref{expBMO} is in Journ\'e~\cite{Jour}.  Inequality
\eqref{expBMO1} is an immediate consequence of~\eqref{expBMO} and the definition
of the Luxemburg norm.

\section{Estimates on the local mean oscillation of $[b,T^d]$}
\label{section:local-mean}

In this section we state a decomposition theorem due to
Lerner~\cite{Lern1} and make the estimate we need to apply it to
commutators of dyadic singular integrals.  We begin by recalling a few facts.
Given a cube $Q$ and $\lambda$, $0<\lambda<1$, define the local mean
oscillation of $f$ on $Q$ by
$$\omega_\lambda(f,Q)=\inf_{c\in \R}((f-c)\chi_{Q})^*(\lambda|Q|).$$
Define the dyadic local sharp maximal function on a fixed dyadic cube $Q$ by  
\begin{equation}\label{locsharp} 
M^{\sharp, d}_{\lambda, Q} f(x)=\sup_{{Q'\in \D(Q)}\atop{x\in Q'}} \omega_\lambda(f,Q')
\end{equation} 
By the properties of rearrangements, for all $p>0$, 
\begin{equation}\label{rearrang} 
(f\chi_Q)^*(\lambda |Q|)\leq
\lambda^{-1/p}\|f\|_{L^{p,\infty}(Q,dx/|Q|)}
\leq \lambda^{-1/p} \|f\|_{L^p(Q,dx/|Q|)}.
\end{equation}
Given a dyadic cube $Q$, $\hat{Q}$ will be its dyadic parent: the
unique dyadic cube of twice the side length of $Q$ that contains $Q$.
 
\begin{theorem}[\cite{Lern1}] \label{lerndec} 
Given a measurable function $f$ and a dyadic cube $Q$, for each $k\geq 1$ there exists a pairwise disjoint collection of cubes $\{Q^k_j\}\subset \D(Q)$ such that if $\Omega_k=\bigcup_j Q_j^k$:
\begin{enumerate}[{\rm(i)}]
\item $\Omega_{k+1}\subset \Omega_k$;
\item $|\Omega_{k+1}\cap Q_j^k|\leq \frac{1}{2}|Q_j^k|$;
\item for almost every $x\in Q$,
$$|f(x)-m_f(Q)|\leq c\Msharpquar f(x)+ c\sum_{j,k} \omega_{\frac{1}{2^{n+2}}}(f,\hat{Q}_j^k)\chi_{Q_j^k}(x).$$
\end{enumerate}
\end{theorem}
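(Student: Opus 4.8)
The plan is to follow Lerner's argument from \cite{Lern1}. The family $\{Q_j^k\}$ is produced by an iterated stopping-time (Calder\'on-Zygmund type) procedure built on the median $m_f$, after which (i) and (ii) are the routine structural properties and (iii) follows from telescoping medians along nested chains of stopping cubes.

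First I would construct the generations recursively. Set $\Omega_0=Q$ and regard $Q$ as the single cube of generation $0$. Given the generation-$(k-1)$ cubes $\{Q_i^{k-1}\}$ with union $\Omega_{k-1}$, inside each $P=Q_i^{k-1}$ declare the generation-$k$ cubes contained in $P$ to be the maximal dyadic proper subcubes $Q'\subsetneq P$ for which
\[
\bigl|\{\,y\in Q' : |f(y)-m_f(P)|>2\,\omega_{1/4}(f,P)\,\}\bigr|>\tfrac12|Q'|,
\]
and let $\Omega_k$ be the union of all these cubes over all $P$ in generation $k-1$. By maximality the selected cubes of a fixed generation are pairwise disjoint, and (i) is immediate because every generation-$k$ cube lies inside a generation-$(k-1)$ cube. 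For (ii), fix $P'=Q_j^k$: the elementary bound $|m_f(R)-c|\le\bigl((f-c)\chi_R\bigr)^*(\lambda|R|)$, valid for every $\lambda<\tfrac12$ and every $c$, applied with $R=P'$ and optimized over $c$ gives $\bigl((f-m_f(P'))\chi_{P'}\bigr)^*(\tfrac14|P'|)\le 2\,\omega_{1/4}(f,P')$, so the superlevel set $\{y\in P' : |f(y)-m_f(P')|>2\,\omega_{1/4}(f,P')\}$ has measure at most $\tfrac14|P'|$; since the disjoint generation-$(k+1)$ subcubes of $P'$ each capture more than half of that set restricted to them, summing yields $|\Omega_{k+1}\cap Q_j^k|\le\tfrac12|Q_j^k|$.

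For (iii): by (ii) and disjointness the $\Omega_k$ shrink fast enough that $|\bigcap_k\Omega_k|=0$, so for almost every $x\in Q$ there is a largest $k=k(x)$ with $x\in\Omega_{k(x)}$, and hence a nested chain $Q=P_0\supsetneq P_1\supsetneq\cdots\supsetneq P_{k(x)}\ni x$ of selected cubes with $P_i$ in generation $i$. Telescoping,
\[
|f(x)-m_f(Q)|\le |f(x)-m_f(P_{k(x)})|+\sum_{i=1}^{k(x)}\bigl|m_f(P_i)-m_f(P_{i-1})\bigr|.
\]
Since $x\notin\Omega_{k(x)+1}$, no dyadic subcube of $P_{k(x)}$ containing $x$ was selected at the next stage; letting such cubes shrink to the Lebesgue point $x$ in the (now failing) selection inequality gives $|f(x)-m_f(P_{k(x)})|\le 2\,\omega_{1/4}(f,P_{k(x)})\le 2\,\Msharpquar f(x)$. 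For each median increment I would route through the dyadic parent $\hat P_i$ of $P_i$ (which satisfies $P_i\subseteq\hat P_i\subseteq P_{i-1}$), writing $|m_f(P_i)-m_f(P_{i-1})|\le|m_f(P_i)-m_f(\hat P_i)|+|m_f(\hat P_i)-m_f(P_{i-1})|$, bounding the first summand by $2\,\omega_{1/2^{n+2}}(f,\hat P_i)$ from the median--rearrangement estimate together with the monotonicity of the decreasing rearrangement under passing from $\hat P_i$ to its child $P_i$ (this is exactly where the factor $\tfrac14\cdot 2^{-n}=2^{-(n+2)}$ enters), and the second summand by $c\bigl(\omega_{1/2^{n+2}}(f,\hat P_i)+\omega_{1/4}(f,P_{i-1})\bigr)$ using that $\hat P_i$, lying strictly between $P_i$ and $P_{i-1}$, failed the selection test. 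The residual $\omega_{1/4}(f,P_{i-1})$ terms I would then sum by the same rearrangement monotonicity in the form $\omega_{1/4}(f,P_m)\le\omega_{1/2^{n+2}}(f,\hat P_m)$: this reabsorbs them into the boundary term (the $m=0$ term, since $\omega_{1/4}(f,Q)\le\Msharpquar f(x)$) and into the sum over dyadic parents of deeper stopping cubes. Finally, for each $1\le k\le k(x)$ the point $x$ lies in exactly one generation-$k$ cube, namely $P_k$, and in none for $k>k(x)$, so all surviving terms combine as $\sum_i\omega_{1/2^{n+2}}(f,\hat P_i)=\sum_{j,k}\omega_{1/2^{n+2}}(f,\hat Q_j^k)\chi_{Q_j^k}(x)$, which is (iii).

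The step I expect to be the main obstacle is precisely this last piece of bookkeeping: arranging the stopping threshold and the two parameters ($\tfrac14$ at the stopping and boundary level, $\tfrac1{2^{n+2}}$ in the summed term) so that (ii) holds with the constant $\tfrac12$, the boundary term is dominated by $\Msharpquar f$, and the telescoped median increments land \emph{exactly} on $\omega_{1/2^{n+2}}(f,\hat Q_j^k)$ --- on the \emph{dyadic} parents of the stopping cubes and at the stated parameter --- rather than on the ambient stopping cubes or at a worse parameter. This rests on a handful of elementary lemmas describing how $\bigl((f-c)\chi_R\bigr)^*(\lambda|R|)$ transforms when one changes the centering constant $c$, decreases $\lambda$, or passes to a sub- or supercube; verifying that these estimates cascade correctly through the intermediate dyadic scales, and in particular controlling $|m_f(\hat P_i)-m_f(P_{i-1})|$ when $\hat P_i$ is much smaller than $P_{i-1}$, is where the care is needed.
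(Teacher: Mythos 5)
The paper does not prove this theorem; it cites it from Lerner~\cite{Lern1}, so there is no internal proof to compare against. Your reconstruction follows the structure of Lerner's original argument: a median-based stopping-time construction, a Lebesgue-point argument at the last generation $k(x)$ producing the $\Msharpquar f(x)$ term, and a telescoping of medians routed through dyadic parents producing the summed oscillation term. The pieces that are fully verifiable are correct: the stopping rule (select maximal proper dyadic subcubes $Q'\subsetneq P$ on which the set $\{|f-m_f(P)|>2\omega_{1/4}(f,P)\}$ occupies more than half of $Q'$), the bound on the superlevel set $\{y\in P':|f(y)-m_f(P')|>2\omega_{1/4}(f,P')\}$ by $\tfrac14|P'|$ giving (ii), the appearance of $2^{-(n+2)}=\tfrac14\cdot 2^{-n}$ when passing from a stopping cube to its dyadic parent, and the rearrangement monotonicity $\omega_{1/4}(f,P_m)\le\omega_{1/2^{n+2}}(f,\hat P_m)$ used to reabsorb the residual $\omega_{1/4}(f,P_{i-1})$ terms and the boundary $\omega_{1/4}(f,Q)\le\Msharpquar f(x)$ term.

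The one place that deserves more care than the sketch gives it is the median estimate on the failed cube $\hat P_i$: a cube that merely fails the selection test gives only the non-strict bound $|\{y\in\hat P_i:|f(y)-m_f(P_{i-1})|>2\omega_{1/4}(f,P_{i-1})\}|\le\tfrac12|\hat P_i|$, while the inequality $|m_f(R)-c|\le((f-c)\chi_R)^*(\lambda|R|)$ you invoke is stated only for $\lambda<\tfrac12$. In the boundary case one can only conclude that \emph{some} median of $f$ on $\hat P_i$ lies within $2\omega_{1/4}(f,P_{i-1})$ of $m_f(P_{i-1})$; since the median is generally nonunique, this requires either fixing the choices of medians compatibly with the stopping-time construction, or building a small slack into the stopping threshold. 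This is a genuine but routine technicality, not a structural flaw, and modulo that point your proposal is a faithful reconstruction of Lerner's proof.
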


We make one observation which will be used heavily in what follows.
In general the sets $\{Q_j^k\}$ are only pairwise disjoint for a fixed
$k$.  However, if we define $E_j^k=Q_j^k\backslash \Omega_{k+1}$, then
the sets $\{E_j^k\}$ are pairwise disjoint for all $j,k$ and satisfy
$|E_j^k|\leq |Q^k_j|\leq 2|E_j^k|$.  

To apply Theorem \ref{lerndec} we need to estimate the local mean oscillation of $[b,T^d]$.   

\begin{lemma} \label{localos}
Suppose $T^d$ is a dyadic singular integral of
  order $\tau$, $Q$ is a dyadic cube and $0<\lambda\leq 1/2$.  Then
  there exists $c=c(n,\tau, \lambda)$ such that  for any $f$ and every
  $x\in Q$, 
\begin{equation}\label{meanosc} 
\omega_\lambda ([b,T^d]f, Q)\leq 
c\|b\|_{BMO} \big(\|f\|_{L\log L,Q^\tau}+\inf_{y\in Q} T_*^{d} f(y)\big),
\end{equation}
and
\begin{equation} \label{localshpest} 
\Msharp([b,T^d] f)(x)\leq c\|b\|_{BMO}\big(M_{L\log L}^d
f(x)+T_*^{d}f(x)\big). 
\end{equation}
\end{lemma}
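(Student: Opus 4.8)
The plan is to estimate the local mean oscillation $\omega_\lambda([b,T^d]f,Q)$ directly, and then derive \eqref{localshpest} by taking a supremum over dyadic subcubes. The starting point is the standard algebraic manipulation for commutators: for any constant $a\in\R$ we may write
\[
[b,T^d]f = (b-a)T^d f - T^d\big((b-a)f\big).
\]
I would choose $a = a_b(Q^\tau)$, the average of $b$ over the $\tau$-fold dyadic parent $Q^\tau$. Then, since $\omega_\lambda(g,Q)\leq \omega_{\lambda/2}(g_1,Q)+\omega_{\lambda/2}(g_2,Q)$ when $g=g_1+g_2$ (by subadditivity of the decreasing rearrangement at the point $\lambda|Q|$, splitting the level into two halves), and since $\omega_\lambda(g-c,Q)=\omega_\lambda(g,Q)$ for any constant $c$, it suffices to bound the two pieces $(b-a)T^d f$ and $T^d((b-a)f)$ separately, each with its own constant subtracted off.

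For the first piece, $\omega_{\lambda/2}\big((b-a)T^d f,Q\big)$, I would bound $((b-a)T^d f)^*(\tfrac{\lambda}{2}|Q|)$ using a splitting of the measure of the super-level set and the generalized Hölder inequality for rearrangements; more directly, using \eqref{rearrang} one gets
\[
\big((b-a)T^d f\,\chi_Q\big)^*(\tfrac{\lambda}{2}|Q|) \leq c\,\|(b-a)T^d f\|_{L^{1/2}(Q,dx/|Q|)}
\]
— but it is cleaner to split $(b-a)T^d f$ itself once more and estimate via the $\exp L$–$L\log L$ duality: the $L^1(Q,dx/|Q|)$ average of $|(b-a)|\,|T^d f - (T^d f)_{Q^\tau}|$ plus $|(b-a)|\,|(T^d f)_{Q^\tau}|$. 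The first is controlled by $\|b-a_b(Q^\tau)\|_{\exp L,Q^\tau}\,\|T^d f - (T^d f)_{Q^\tau}\|_{L\log L, Q^\tau}$; here I would subtract the constant $(b-a)(T^d f)_{Q^\tau}$ from the whole commutator, absorb $\|T^d f - (T^d f)_{Q^\tau}\|_{L\log L,Q^\tau}$ into a term of the form $\inf_{y\in Q} T^d_* f(y)$ via a Yano/Coifman–Fefferman type argument (Theorem~\ref{thm:yano} applied to the truncated dyadic singular integral restricted to $Q^\tau$, using the strong $(2,2)$ and weak $(1,1)$ bounds on $T^d_*$), and handle $|(T^d f)_{Q^\tau}|$ by the pointwise bound $|(T^d f)_{Q^\tau}| \leq \inf_{y\in Q} T^d_* f(y)$ up to the order-$\tau$ truncation. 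Theorem~\ref{sharpJN} supplies the uniform bound on $\|b-a_b(Q^\tau)\|_{\exp L,Q^\tau}$ in terms of $\|b\|_{BMO}$.

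For the second piece, $\omega_{\lambda/2}\big(T^d((b-a)f),Q\big)$, I would split $(b-a)f$ as the part supported on $Q^\tau$ and the part supported outside. The inside part: by \eqref{rearrang} with $p=1$ and the weak $(1,1)$ bound for $T^d$ (or better, for $T^d_*$), $\big(T^d((b-a)f\chi_{Q^\tau})\chi_Q\big)^*(\tfrac{\lambda}{2}|Q|)\leq c\lambda^{-1}\avgQ[\text{something}] \leq c\,\dashint_{Q^\tau}|b-a||f| \leq c\|b-a_b(Q^\tau)\|_{\exp L,Q^\tau}\|f\|_{L\log L,Q^\tau}$, again by generalized Hölder and John–Nirenberg. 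The outside part: because $T^d$ is dyadic of order $\tau$, the kernel structure forces $T^d\big((b-a)f\chi_{(Q^\tau)^c}\big)$ to be \emph{constant} on $Q$ — the only cubes $R\in\D$ with $\langle (b-a)f\chi_{(Q^\tau)^c}, h_R\rangle \neq 0$ and $g_R$ not constant on $Q$ would have to properly contain $Q^\tau$, hence $g_R$ is constant on $Q$ — so after subtracting that constant this piece contributes nothing to $\omega_{\lambda/2}$. Collecting the three surviving contributions gives \eqref{meanosc} with $c$ depending on $n,\tau,\lambda$; the dependence is exponential in $\tau$, entering through the passage from $Q$ to $Q^\tau$ in the weak-$(1,1)$/Yano estimates and through counting the $2^{\tau n}$ subcubes, which matches Remark~\ref{exp1}.

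Finally, \eqref{localshpest} follows by applying \eqref{meanosc} with $\lambda = 1/4$ on every $Q'\in\D(Q)$ with $x\in Q'$: the term $\|f\|_{L\log L,(Q')^\tau}$ is dominated by $M^d_{L\log L}f(x)$ since $x\in Q'\subset (Q')^\tau$, and $\inf_{y\in Q'}T^d_* f(y)\leq T^d_* f(x)$; taking the supremum over such $Q'$ gives $M^{\sharp,d}_{1/4,Q}([b,T^d]f)(x)\leq c\|b\|_{BMO}\big(M^d_{L\log L}f(x)+T^d_* f(x)\big)$. The main obstacle I expect is the bookkeeping in the first piece: correctly choosing the single constant to subtract from the \emph{whole} commutator so that both the $(b-a)(T^df)_{Q^\tau}$ term and the $L\log L$ oscillation term are simultaneously controlled, and justifying the Yano-type bound $\|T^d f - (T^d f)_{Q^\tau}\|_{L\log L,Q^\tau}\lesssim \inf_{y\in Q}T^d_* f(y) + \|f\|_{L\log L, Q^\tau}$ with the right localization to $Q^\tau$.
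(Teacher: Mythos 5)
Your overall strategy---rewrite $[b,T^d]f$ with $a=a_b(Q^\tau)$, subtract a constant, bound each piece's rearrangement, and derive \eqref{localshpest} by taking suprema over dyadic subcubes---matches the paper's, and your treatment of the second piece $T^d((b-a)f)$ (split the input between $Q^\tau$ and its complement; control the near part via the weak $(1,1)$ bound, $\exp L$--$L\log L$ duality and Theorem~\ref{sharpJN}; observe the far part is constant on $Q$ by the dyadic kernel structure) is correct and is essentially the paper's estimate of $H_1$ together with its choice of the constant $c_Q$.

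The gap is in the first piece $(b-a)T^d f$, exactly where you flag the ``main obstacle,'' and it is a real one. The required bound $\|T^d f - (T^d f)_{Q^\tau}\|_{L\log L,Q^\tau}\leq c\big(\|f\|_{L\log L,Q^\tau}+\inf_{y\in Q}T^{d}_*f(y)\big)$ is not available: Theorem~\ref{thm:yano} maps $L\log L$ into $L^1$, not into $L\log L$, and the sharpest local Orlicz estimate for an operator that is merely weak $(1,1)$ and strong $(2,2)$ takes $L(\log L)^2$ into $L\log L$. Following this route would cost an extra logarithm in \eqref{meanosc} and hence a strictly weaker version of Theorem~\ref{main}. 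There are two further problems: the quantity $(b-a)(T^df)_{Q^\tau}$ you propose to subtract is a multiple of $b-a$, not a constant on $Q$; and the inequality $|(T^df)_{Q^\tau}|\leq\inf_{y\in Q}T^{d}_*f(y)$ fails in general, since $\Tout f$ is constant on $Q$ but not on $Q^\tau$ when $\tau\geq 1$, so one only obtains $|(T^df)_{Q^\tau}|=|(\Tout f)_{Q^\tau}|\leq\dashint_{Q^\tau}T^{d}_*f$, an average rather than an infimum. The paper avoids all of this by first splitting the operator, $T^d=\Tin+\Tout$, with $\Tin$ localized to $Q^\tau$ (weak $(1,1)$, strong $(2,2)$) and $\Tout f$ constant on $Q$ with $|\Tout f|\leq\inf_{y\in Q}T^{d}_*f(y)$ there; it then subtracts the genuine constant $c_Q=\Tout((a_b(Q^\tau)-b)f)(x_0)$, and---this is the missing idea---handles the piece $(b-a)\Tin f$ by applying \eqref{rearrang} with $p=1/2$ followed by Cauchy--Schwarz,
\[
\big((b-a)\Tin f\,\chi_Q\big)^*\Big(\frac{\lambda|Q|}{3}\Big)
\leq c_\lambda\left(\dashint_Q |b-a|^{1/2}|\Tin f|^{1/2}\,dx\right)^2
\leq c_\lambda\left(\dashint_Q |b-a|\,dx\right)\left(\dashint_Q|\Tin f|\,dx\right),
\]
which decouples the BMO factor (handled by John--Nirenberg) from $\Tin f$, for which only an $L^1$ bound over $Q^\tau$ is needed---exactly what Yano's theorem provides. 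Replacing your $\exp L$--$L\log L$ estimate for the first piece with this $L^{1/2}$ device, after the $\Tin/\Tout$ split, closes the gap and recovers the paper's argument.
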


\begin{proof} We will prove \eqref{meanosc}; \eqref{localshpest}
  follows at once from the definition of $\Msharp$.

Fix a dyadic cube $Q$ and decompose $T^d$ as
\begin{align*}
T^d f(x) & = \sum_{Q'\in \D} \langle f, h_{Q'}\rangle g_{Q'}(x) \\
& =\sum_{Q'\subseteq Q^\tau} \langle f, h_{Q'}\rangle g_{Q'}(x)
+\sum_{Q'\supset Q^\tau} \langle f, h_{Q'}\rangle g_{Q'}(x)\\ 
& =\Tin f(x)+\Tout f(x).
\end{align*}
The first term $\Tin$ is localized in the sense that $\Tin f(x)=\Tin
(f\chi_{Q^\tau})(x)$.  Furthermore, it is a dyadic singular
integral operator and so is bounded on $L^2(\R^n)$ and weak
$(1,1)$.  The second term $\Tout f(x)$ is constant on $Q$ since
 $Q'\supseteq Q^\tau$.  Thus for
$x\in Q$,
\begin{multline} \label{eqn:Tout-estimate}
|\Tout f(x)| =\Big|\sum_{Q'\supset Q^\tau} \langle f, h_{Q'}\rangle
g_{Q'}(x)\Big| \\
=\Big|\sum_{\stackrel{Q'\in \D}{|Q'|>2^{\tau n}|Q|}} \langle f,
h_{Q'}\rangle g_{Q'}(x)\Big|
\leq \inf_{y\in Q}  T_*^{d}f(y).
\end{multline}

To estimate the commutator, we rewrite it as
\begin{align*}
[b,T^d]f 
&= (b-a_b(Q^\tau))T^d f+T^d ((a_b(Q^\tau)-b)f)\\
&= (b-a_b(Q^\tau))\Tin f+(b-a_b(Q^\tau))\Tout f\\
&\qquad +\Tin ((a_b(Q^\tau)-b)f)+\Tout((a_b(Q^\tau)-b)f).
\end{align*}
The last term is constant on $Q$, so let
$c_{Q}=\Tout((a_b(Q^\tau)-b)f)(x)$ for some $x\in Q$.   Then we
can estimate the local oscillation of $[b,T_\tau^d]f$ by 
\begin{align*}
\omega_\lambda([b,T^d]f,Q)
& \leq  (([b,T^d]f-c_{Q})\chi_{Q})^*(\lambda|Q|) \\
&\leq [\Tin((b-a_b(Q^\tau))f)\chi_{Q}]^*\big(\frac{\lambda |Q|}{3}\big) \\
& \qquad +[(b-a_b(Q^\tau))(\Tin f)\chi_{Q}]^*\big(\frac{\lambda|Q|}{3}\big)\\
& \qquad +[(b-a_b(Q^\tau))(\Tout f)\chi_{Q}]^*\big(\frac{\lambda|Q|}{3}\big) \\
& = H_1 +H_2+H_3.
\end{align*}

We estimate each piece in turn.  By inequality~\eqref{rearrang}, the weak (1,1) boundedness of
$\Tin$ and the exponential integrability of $BMO$ functions (Theorem
\ref{sharpJN}),  we obtain
\begin{align*}
H_1 &\leq  c\lambda^{-1}\|\Tin((b-a_b(Q^\tau))f\chi_{Q^{\tau}})\|_{L^{1,\infty}(Q,dx/|Q|)}\\
&\leq  {c_\lambda} \dashint_{Q^\tau} |b-a_b(Q^\tau)|\, |f(x)|\, dx\\
&\leq  c_\lambda \|b-a_b(Q^\tau)\|_{\exp L, Q^\tau} \|f\|_{L\log L, Q^\tau}\\
&\leq  c_\lambda \|b\|_{BMO} \|f\|_{L\log L,Q^\tau}.\\
\end{align*}

To estimate  $H_2$ we use \eqref{rearrang} with $p=1/2$ and H\"older's
inequality to get
\begin{align*}
H_2 &\leq  \lambda^{-2}\left(\dashint_{Q} |(b-a_b(Q^\tau))\Tin f(x)|^{1/2}\, dx\right)^{2}\\
&\leq  c_\lambda \|b\|_{BMO}\dashint_{Q}|\Tin(f\chi_{Q^{\tau}})|\, dx\\
&\leq  c_\lambda \|b\|_{BMO}\|f\|_{L\log L,Q^\tau}.
\end{align*}
In the last inequality we used Yano's theorem
(Theorem~\ref{thm:yano}); this is possible
since $\Tin$ is bounded on $L^2$ and weak $(1,1)$.  

Finally we estimate $H_3$: by ~\eqref{rearrang} and~\eqref{eqn:Tout-estimate} we have that
$$H_3\leq \frac{c}{\lambda} \dashint_{Q} |\Tout
f(x)||b-a_b(Q^\tau)|\, dx
\leq c_\lambda \|b\|_{BMO}\inf_{y\in Q} T^{d}_* f(y).$$
\end{proof}

\begin{remark} \label{exp1}
  Inequalities \eqref{meanosc} and \eqref{localshpest} are the first
  of two points in which we pick up the exponential dependence on the
  parameter $\tau$.  In fact, if $c=c(n,\tau,\lambda)$ is the constant
  from \eqref{meanosc}, then careful examination shows that $c=c_\lambda
  2^{n\tau}$.
\end{remark}

\section{Proof of Theorem \ref{main}}
\label{section:proof-main-sio}

For the proof of Theorem \ref{main} we will need the following
estimate.  A similar inequality was proved in~\cite{CMP2}.

\begin{lemma} \label{doublemax} Given $p$, $1<p<\infty$, suppose the pair of
  weights $(u,v)$ satisfies
\begin{equation} \label{eqn:dmax}
\sup_Q\|u^{1/p}\|_{A,Q}\|v^{-1/p}\|_{B,Q}<\infty,
\end{equation}
where $A$ and $B$ are defined by \eqref{A} and \eqref{B}.  Then for
$f\in L^p(v)$ and $h\in L^{p'}(\R^n)$,
$$\int_{\R^n}M^df(x) M^d(u^{1/p}h)(x)\,dx\leq c \|f\|_{L^p(v)}\|h\|_{L^{p'}(\R^n)}.$$
\end{lemma}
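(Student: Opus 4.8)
The plan is to reduce the estimate to a pair of single-weight bounds for Orlicz maximal operators by exploiting duality and a Carleson-type argument. First I would linearize $M^d$: for each dyadic cube $Q$ pick a value $M^d f$ is (nearly) attained at, but more efficiently, I would use the principal cube / sparse decomposition of $M^d$. Recall that for any $g \geq 0$ there is a sparse family $\mathcal{S} \subset \D$ (depending on $g$) such that $M^d g(x) \approx \sum_{Q \in \mathcal{S}} \langle g \rangle_Q \chi_Q(x)$ pointwise, where $\langle g\rangle_Q = \dashint_Q g$; equivalently $M^d g \approx \sum_{Q\in\mathcal{S}} \langle g\rangle_Q \chi_{E_Q}$ with $\{E_Q\}$ disjoint and $|E_Q| \geq \frac12 |Q|$. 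Applying this to $g = u^{1/p} h$ (renaming; we may assume $f, h \geq 0$), we get
\[
\int_{\R^n} M^d f(x)\, M^d(u^{1/p}h)(x)\, dx
\leq c \sum_{Q \in \mathcal{S}} \langle u^{1/p} h\rangle_Q \int_{E_Q} M^d f(x)\, dx
\leq c \sum_{Q \in \mathcal{S}} |Q|\, \langle u^{1/p} h\rangle_Q\, \langle M^d f\rangle_Q,
\]
using $\int_{E_Q} M^d f \leq \int_Q M^d f$. (Alternatively one can dualize with $\|h\|_{p'}=1$ and bound $\int M^d f \cdot M^d(u^{1/p}h)$ directly, but the sparse form is cleanest.)

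Next I would apply the generalized Hölder inequality (Lemma~\ref{GenHolder}) on each cube $Q$ to split $\langle u^{1/p} h \rangle_Q$. Writing $u^{1/p} h = (u^{1/p} v^{-1/p}) \cdot (v^{1/p} h) \cdot$ — no; the right split is to isolate the weight from everything. Actually the cleanest route: bound $\langle M^d f\rangle_Q \le \|M^d f\|_{\bar D, Q}$ is false in general, so instead I would use $\langle g \cdot M^d f\rangle_Q \leq \|g\|_{C',Q}\|M^d f\|_{\bar{C'},Q}$ — hmm. Let me restate the intended mechanism: the hypothesis \eqref{eqn:dmax} with $A(t)=t^p\log(e+t)^{2p-1+\delta}$, $B(t)=t^{p'}\log(e+t)^{2p'-1+\delta}$, together with Lemma~\ref{Orliczmax}, gives us the $L\log L$ associates $C \in B_{p'}$ and $D \in B_p$, and the two Hölder relations $\|fg\|_{L\log L, Q} \le c\|f\|_{A,Q}\|g\|_{C,Q}$ and $\|fg\|_{L\log L,Q}\le c\|f\|_{B,Q}\|g\|_{D,Q}$. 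Combining these, one obtains for each $Q$
\[
\langle M^d f \cdot u^{1/p} h\rangle_Q \leq c\, \|u^{1/p} h\|_{L\log L, Q}\, \|M^d f\|_{\exp L, Q}
\]
— no, that gives $L\log L$ on the wrong factor. The correct manipulation is to distribute the two available log bumps $A$ (on $u^{1/p}$) and $B$ (on $v^{-1/p}$) so that one ends up with the $B_{p'}$ operator $M_C$ acting on $u^{1/p} h$ and the $B_p$ operator $M_D$ acting on $f v^{1/p} \cdot v^{-1/p}$; concretely, on each $Q$,
\[
|Q|\,\langle u^{1/p}h\rangle_Q \langle M^d f\rangle_Q
\leq c\, |Q|\,\|u^{1/p}\|_{A,Q}\|v^{-1/p}\|_{B,Q}\, \|h\|_{C,Q}\, \|f\|_{D',Q}
\]
after one more Hölder splitting $f = f v^{1/p} \cdot v^{-1/p}$ inside the average and absorbing. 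Using \eqref{eqn:dmax} this is $\leq c\,|Q|\,\|h u^{1/p}/u^{1/p}\|\cdots$; the point is the $|Q|\|u^{1/p}\|_{A,Q}\|v^{-1/p}\|_{B,Q}$ factor is uniformly bounded, leaving $\sum_Q |E_Q|$-weighted averages of $M_C$-type and $M_D$-type maximal quantities, which by sparseness and the disjointness of $\{E_Q\}$ telescope into $\int M_C(u^{1/p}h)^{?}$ and $\int M_D(\cdots)$. Then a final application of Hölder's inequality with exponents $p', p$ and Theorem~\ref{Bpmax} (i.e. $C \in B_{p'}$, $D \in B_p$, so $M_C : L^{p'} \to L^{p'}$ and $M_D : L^p \to L^p$) yields the bound $c\|f\|_{L^p(v)}\|h\|_{L^{p'}}$.

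The main obstacle is organizing the double Hölder splitting so that exactly the two ``extra'' logarithms supplied by the $2p-1+\delta$ and $2p'-1+\delta$ exponents (versus the single-bump $p-1+\delta$) are consumed — one to pass from $M^d f$ and the plain average to an $L\log L$ / $M_D$ quantity, and one to handle $u^{1/p}h$ via $M_C$ — while the residual powers land precisely in the $B_p$/$B_{p'}$ classes rather than on the endpoint. This is the computation referenced as ``a similar inequality was proved in~\cite{CMP2}'': I would follow that template, using our Lemma~\ref{Orliczmax} to certify membership in $B_p$ and $B_{p'}$, and taking care that the sparse/Carleson packing of $\{E_Q\}$ is applied only after the per-cube estimates are in place so no logarithmic factor is lost in the summation.
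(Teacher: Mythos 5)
Your toolkit is the right one — sparse decomposition, generalized H\"older with the $L\log L$ associates $C\in B_{p'}$ and $D\in B_p$ from Lemma~\ref{Orliczmax}, and the resulting $L^p$/$L^{p'}$ boundedness of $M_C,M_D,M_{\bar A},M_{\bar B}$ — but the reduction to a \emph{single} sparse family $\mathcal{S}$ adapted only to $u^{1/p}h$ leaves a genuine gap, which is exactly where your write-up stalls. After you reduce to $\sum_{Q\in\mathcal{S}}|Q|\,\langle u^{1/p}h\rangle_Q\,\langle M^d f\rangle_Q$, the factor $\langle M^d f\rangle_Q$ is not a local quantity of $f$ on $Q$. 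Splitting $\langle M^df\rangle_Q \le \langle M^d(f\chi_Q)\rangle_Q + \inf_{y\in Q}M^d(f\chi_{\R^n\setminus Q})(y)$, the local piece is controlled by Yano's theorem and your H\"older manipulations as intended, but the non-local piece gives $\sum_{Q\in\mathcal{S}}|E_Q|\,\langle u^{1/p}h\rangle_Q\,\inf_Q M^d f \le c\int M^d(u^{1/p}h)\,M^d f\,dx$, i.e.\ a circular bound by the quantity you started from, and there is no way to absorb it.

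The paper's proof avoids this by running two Calder\'on--Zygmund decompositions simultaneously — one for $w=u^{1/p}h$ at heights $a^{k-j-1}$ with cubes $P_r^l$, and one for $f$ at heights $a^{j}$ with cubes $Q_s^m$ — intersecting them on the level sets $\Omega_{j,k}$ where $M^df\approx a^j$ and $M^dw\approx a^{k-j}$, and then splitting the index set into $\Gamma_1$ (where $P_r^{k-j-1}\subseteq Q_s^j$) and $\Gamma_2$ (where $Q_s^j\subsetneq P_r^{k-j-1}$). In each regime the smaller cubes are nested inside the larger one, so one can sum over the inner cubes, apply Yano's theorem on the outer cube to produce either $\|w\|_{L\log L,Q_s^j}$ or $\|f\|_{L\log L,P_r^l}$, and only then use generalized H\"older with $A,B,C,D$ and hypothesis \eqref{eqn:dmax} together with the disjointness of the $\tilde Q_s^j$ (resp.\ $\tilde P_r^l$) to close via H\"older in the summation and the $B_p/B_{p'}$ maximal bounds. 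To make your argument work you would need this \emph{double} decomposition and the $\Gamma_1/\Gamma_2$ case analysis by cube inclusion; a sparse decomposition of $M^d(u^{1/p}h)$ alone does not suffice.
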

\begin{proof} By a standard density argument we may assume $f,h$ are
  non-negative functions in $L^\infty_c$.  Set $a=4^n$ and let $w=u^{1/p}h$.  For each $j,k\in \Z$ define
$$\Omega_{j,k}=\{x: a^{k-j-1}<M^dw(x)\leq a^{k-j+1}\}\cap \{x: a^j < M^df(x)\leq a^{j+1}\};$$
then
$$\int_{\R^n}M^df(x) M^dw(x)\,dx\leq \sum_{j,k}\int_{\Omega_{j,k}}M^df(x)M^dw(x)\, dx.$$
For $l,m\in \Z$ let $\{P_r^l\}_r$ be the Calder\'on-Zygmund cubes of
$w$ at height $a^l$ and $\{Q^m_s\}_s$ be the Calder\'on-Zygmund cubes
of $f$ at height $a^m$ (see~\cite{CMP4,perez94}); then
$a_w(P_r^l)\approx a^l$, $a_f(Q_s^m)\approx a^m$, and 
$$\{x:M^dw(x)>a^{k-j-1}\}=\bigcup_{r} P_r^{k-j-1},\quad \{x:M^df(x)>a^j\}=\bigcup_s Q_s^{j}.$$
We then have that
$$\Omega_{j,k}\subseteq \bigcup_{r,s} P_r^{k-j-1}\cap Q_s^j.$$
Let $E_{j,k}^{r,s}=\Omega_{j,k}\cap( P_r^{k-j-1}\cap Q_s^j)$; if
$E_{j,k}^{r,s}\not=\varnothing$,
then either 
$$P_r^{k-j-1}\subseteq Q_s^j \quad \text{or} \quad  Q_s^j\subsetneq P_r^{k-j-1}.$$

Define $\Gamma_1,\Gamma_2\subset \Z^4$ by 
\begin{align*}
\Gamma_1 &=\{(j,k,r,s): P_r^{k-j-1}\subseteq Q_s^j \} \\
\Gamma_2 &= \{(j,k,r,s): Q_s^j\subsetneq P_r^{k-j-1}\}. 
\end{align*}
We can now estimate as follows:
\begin{align*}
\int_{\R^n}M^df(x) M^dw(x)\,dx 
&\leq \sum_{j,k}\int_{\Omega_{j,k}}M^df(x)M^dw(x)\, dx\\
&\leq  \sum_{j,k}\sum_{r,s}\int_{E_{j,k}^{r,s}} M^df(x)M^dw(x)\, dx \\
&\leq  \sum_{j,k}\sum_{r,s} a^{j+1}a^{k-j+1} |E_{j,k}^{r,s}|\\
&\leq \sum_{(j,k,r,s)\in \Gamma_1} a^{j+1}a^{k-j+1} |E_{j,k}^{r,s}|\\
& \qquad  + \sum_{(j,k,r,s)\in \Gamma_2} a^{j+1}a^{k-j+1} |E_{j,k}^{r,s}|\\
&= I_1+I_2.
\end{align*}
We first estimate $I_1$.   Let
$$\tilde{P}_r^l=P_r^l\backslash \{x:M^dw(x)>a^{l+1}\} 
\quad \text{and} \quad \tilde{Q}_s^m=Q_s^m\backslash
\{x:M^df(x)>a^{m+1}\};$$
then $|\tilde P_r^l|\geq \frac{1}{2}|P_r^l|, |\tilde Q_s^m|\geq
\frac{1}{2}|Q_s^m|$ and the families $\{\tilde P_r^l\}_{r,l}$ and
$\{\tilde Q_s^m\}_{s,m}$ are pairwise disjoint.  (See~\cite{perez94}.)
Further, for
$(j,k,r,s)\in \Gamma_1$ we have $P_r^{k-j-1}\subseteq Q_s^j$, and
$|E_{j,k}^{r,s}|\leq |P_r^{k-j-1}| \leq 2|\tilde
P_r^{k-j-1}|$. We  now estimate $I_1$:
\begin{align*}
I_1 
&\leq c\sum_{(j,k,r,s)\in \Gamma_1} 
\left(\dashint_{P_r^{k-j+1}} w(x)\ dx\right)\cdot 
\left(\dashint_{Q_s^{j}} f(x)\ dx\right)\cdot |E_{j,k}^{r,s}|\\
& \leq c   \sum_{j,s}\left(\dashint_{Q_s^{j}} f(x)\ dx\right)
\sum_{{k,r:}\atop{(j,k,r,s)\in \Gamma_1}}  
\dashint_{P_r^{k-j+1}} w(x)\ dx \cdot |\tilde P_r^{k-j+1}| \\
&\leq c  \sum_{j,s}\left(\dashint_{Q_s^{j}} f(x)\ dx\right)
\sum_{{k,r:}\atop{(j,k,r,s)\in \Gamma_1}}  \int_{\tilde{P}_r^{k-j+1}}M^d(\chi_{Q_s^j} w)(x)\ dx \\
&\leq c\sum_{j,s}\left(\dashint_{Q_s^{j}} f(x)\ dx\right) \cdot 
\left(\dashint_{Q_s^j}M^d(\chi_{Q_s^j} w)(x)\ dx\right)\cdot |\tilde{Q}_s^j| \\
&\leq c \sum_{j,s}\left(\dashint_{Q_s^{j}} f(x)\ dx\right) \cdot 
\|w\|_{L\log L,Q_s^j}\cdot |\tilde{Q}_s^j| \\
&\leq  c\sum_{j,s} \|fv^{1/p}\|_{\bar{B},Q_s^j}
\|v^{-1/p}\|_{B,Q_s^j} 
\|h\|_{C,Q_s^j}\|u^{1/p}\|_{A,Q_s^j} \cdot |\tilde{Q}_s^j|;
\end{align*}
the Young function $C$ is as in equation \eqref{C} and  we have used the
generalized H\"older inequality (Lemma \ref{GenHolder}) and Yano's
theorem (Theorem~\ref{thm:yano}) in the second to last inequality.  
By Lemma \ref{Orliczmax},
$M_{\bar{B}}$ and $M_C$ are bounded on $L^p(\R^n)$ and $L^{p'}(\R^n)$
respectively.  Hence,  by~\eqref{eqn:dmax} and H\"older's inequality with respect to the summation,
\begin{align*}
I_1
&\leq c \left(\sum_{j,s} \|fv^{1/p}\|_{\bar{B},Q_s^j}^p\cdot
  |\tilde{Q}_s^j|\right)^{1/p}
\left(\sum_{j,s} \|h\|_{C,Q_s^j}^{p'} \cdot |\tilde{Q}_s^j|\right)^{1/p'}\\
&\leq  c \left(\sum_{j,s}\int_{\tilde{Q}_s^j}
  M_{\bar{B}}(fv^{1/p})(x)^p\, dx\right)^{1/p}
\left(\sum_{j,s} \int_{\tilde{Q}_s^j} M_{C}h(x)^{p'}\, dx\right)^{1/p'}\\
&\leq  c\left(\int_{\R^n} M_{\bar{B}}(fv^{1/p})(x)^p\, dx\right)^{1/p}
\left(\int_{\R^n} M_{C}h(x)^{p'}\, dx\right)^{1/p'}\\
&\leq  c\|f\|_{L^p(v)}\|h\|_{L^{p'}(\R^n)}.
\end{align*}

The estimate for
$I_2$ is similar.  Since $E_{j,k}^{r,s}\subseteq
Q_s^j\subsetneq P_r^{k-j-1}$ for $(j,k,r,s)\in \Gamma_2$ we have that
\begin{align*}
I_2  &\leq c\sum_{(j,k,r,s)\in \Gamma_2}\left(\dashint_{P_r^{k-j+1}}
  w(x)\ dx\right)\cdot 
\left(\dashint_{Q_s^{j}} f(x)\ dx\right)\cdot |E_{j,k}^{r,s}|\\
&\leq  c\sum_{r,l} \dashint_{P_r^l} w(x)\ dx\sum_{{(j,k,r,s)\in
    \Gamma_2}\atop{k-j-1=l}}
\left(\dashint_{Q_s^{j}} f(x)\ dx\right)\cdot |E_{j,k}^{r,s}|\\
&\leq  c\sum_{r,l} \dashint_{P_r^l} w(x)\ dx\sum_{{(j,k,r,s)\in
    \Gamma_2}\atop{k-j-1=l}}
\left(\dashint_{Q_s^{j}} f(x)\ dx\right)\cdot |\tilde{Q}_s^j|\\
&\leq  c\sum_{r,l}\left( \dashint_{P_r^l} w(x)\ dx\right)\cdot
\left(\dashint_{P_r^l} M^d(\chi_{P_r^l}f)(x)\,dx\right)\cdot |\tilde P_r^l|\\
&\leq  c\sum_{r,l}\left( \dashint_{P_r^l} u(x)^{1/p}h(x)\
  dx\right)\cdot 
\|f\|_{L\log L,P_r^l}\cdot |\tilde P_r^l|\\
&\leq  c\sum_{r,l}\|u^{1/p}\|_{A,P_r^l}\|h\|_{\bar{A},P_r^l}
\|fv^{1/p}\|_{D,P_r^l}\|v^{-1/p}\|_{B,P_r^l}\cdot |\tilde P_r^l|\\
&\leq  c\|f\|_{L^p(v)}\|h\|_{L^{p'}(\R^n)};
\end{align*}
$D$ is as in
\eqref{D} and we have  once again used~\eqref{eqn:dmax}, Yano's theorem, and Lemma
\ref{Orliczmax} for the boundedness of $M_{\bar{A}}$ and $M_D$ on
$L^{p'}(\R^n)$ and $L^p(\R^n)$.
\end{proof}

\begin{proof}[Proof of Theorem \ref{main}] 
The first part of our argument is similar to one found in \cite[Theorems~5.1,~5.2]{CMP3}.  Fix $f$; by a standard approximation
argument we may assume without loss of generality that $f\in
L^\infty_c$.   Let $\R^n_j$, $1\le j\le 2^n$, denote the the
$n$-dimensional quadrants in $\R^n$: i.e., the sets
$\R^{\pm}\times \R^{\pm}\times\cdots\times \R^{\pm}$ where $\R^+ =
[0,\infty)$ and $\R^-=(-\infty,0)$.  For each $j$, $1\le j\le 2^n$, and for each $N>0$ let $Q_{N,j}$ be
the dyadic cube adjacent to the origin of side length $2^N$ that is
contained in $\R^n_j$.   Since $T^d$ is weak $(1,1)$ and strong
$(2,2)$, by interpolation and duality it is bounded on $L^p(\R^n)$,
$1<p<\infty$.  Therefore, since $|m_f(Q)|\leq  (f\chi_Q)^*(|Q|/2)$ (see~\cite{Lern1}), by
inequality~\eqref{rearrang}, $m_{[b,T^d]f}(Q_{N,j})\rightarrow 0$ as
$N\rightarrow \infty$.  Therefore, by Fatou's lemma and Minkowski's
inequality,
\begin{multline*}
 \|[b,T^d] f\|_{L^p(u)}  \\\leq \liminf_{N\rightarrow \infty}
\sum_{j=1}^{2^n} \left(\int_{Q_{N,j}} |[b,T^d]f(x) - m_{[b,T^d]f}(Q_{N,j})|^pu(x)\,dx\right)^{1/p}.
\end{multline*}
Hence, it will suffice to prove that each term in the sum on the
right is bounded by $c\|f\|_{L^p(v)}$ where $c$ is independent of
$N$.  Further, by duality, it will suffice to show that for any $h\in
L^{p'}$, $\|h\|_{p'}=1$, 
\[ \int_{Q_{N,j}} |[b,T^d]f(x) -
m_{[b,T^d]f}(Q_{N,j})|u(x)^{1/p}h(x)\,dx \leq c\|f\|_{L^p(v)}. \]

Fix $j$ and let $Q_N=Q_{N,j}$.  By Theorem \ref{lerndec} and Lemma
\ref{localos} we have the following pointwise estimate:
\begin{align*}
\lefteqn{|[b,T^d] f(x)-m_{[b,T^d]f}(Q_N)|}\\
& \quad \leq  c\M^{\sharp,d}_{\frac14,Q_N}([b,T^d]f)(x) +c\sum_{j,k} \omega_{\frac{1}{2^{n+2}}}([b,T^d]f, \hat{Q}^k_j)\chi_{Q_j^k}(x)\\
& \quad \leq c\|b\|_{BMO} \big(M^d_{L\log L}f(x)+T_*^{d}f(x)\\
& \quad \qquad +\sum_{j,k} \|f\|_{L\log L, P_j^k}\chi_{Q_j^k}(x)+
\sum_{j,k}\inf_{y\in Q_j^k} T_*^df(y)\chi_{Q^k_j}(x)\big)\\
&\quad = c\|b\|_{BMO} (M^d_{L\log L}f(x)+T_*^{d}f(x)+F(x)+G(x)),
\end{align*}
where
$P_j^k=(\hat{Q}_j^k)^\tau$.  Fix $h\in L^{p'}(\R^n)$, $\|h\|_{p'}=1$; then we have
\begin{align*}
 \lefteqn{\int_{Q_N} |[b,T^d] f(x)-m_{[b,T^d]f}(Q_N)| u(x)^{1/p}h(x)\, dx} \\
&\quad \leq c\|b\|_{BMO}\left( \int_{Q_N} M^d_{L\log L} f(x)u(x)^{1/p}h(x)\, dx\right. \\
& \quad \quad +\int_{Q_N} T_*^{d}f(x)u(x)^{1/p}h(x)\, dx+\int_{Q_N}F(x)u(x)^{1/p}h(x)\, dx\\
& \quad \quad \left. +\int_{Q_N}G(x)u(x)^{1/p}h(x)\, dx\right)\\
&\quad = c\|b\|_{BMO}(J_1+J_2+J_3+J_4).
\end{align*}
We first note that $J_1$ and $J_2$ are bounded by $\|f\|_{L^p(v)}$, since
the pair $(u,v)$ satisfies the conditions for the two-weight
norm inequalities for the operators $M_{L\log L}^d$ and $T^{d}_*$.  More
precisely, by H\"older's inequality and Theorem \ref{LlogL} we have
that 
\[  J_1\leq \|M_{L\log L}^d f\|_{L^p(u)} \|h\|_{L^{p'}(\R^n)} \leq
c\|f\|_{L^{p}(v)}. \]
Similarly, by Theorem \ref{2wdyad}, 
\[ J_2\leq \|T^{d}_* f\|_{L^p(u)}\|h\|_{L^{p'}(\R^n)}\leq
c\|f\|_{L^p(v)}. \]

Let $E_j^k=Q_j^k\backslash \Omega_{k+1}$ so that the sets $\{E_j^k\}$ are pairwise disjoint and satsify $|E_j^k|\approx |Q_j^k|$ (see the comment following Theorem \ref{lerndec}).  We now estimate $J_3$:
\begin{align*}
J_3
&= \sum_{j,k} \|f\|_{L\log L, P_j^k} \cdot \dashint_{Q_j^k} u(x)^{1/p} h(x)\, dx \cdot |Q_j^k| \\
&\leq  c\sum_{j,k} \|f\|_{L\log L, P_j^k} \cdot \dashint_{Q_j^k} u(x)^{1/p} h(x)\, dx \cdot |E_j^k| \\
&\leq  c \sum_{j,k} \| f v^{1/p}\|_{D, P_j^k} \|v^{-1/p}\|_{B,P_j^k} 
\|h\|_{\bar{A},Q_j^k} \|u^{1/p}\|_{A,Q_j^k} |E_j^k|,
\end{align*}
where $D$ is from \eqref{D}.  By Lemma \ref{Orliczmax}
$M_D:L^p(\R^n)\ra L^p(\R^n)$ and $M_{\bar{A}}:L^{p'}(\R^n)\ra
L^{p'}(\R^n)$.  Hence, by~\eqref{twowcond},
\begin{align*}
J_3 &\leq c\sum_{j,k} \| f v^{1/p}\|_{D, P_j^k} \|v^{-1/p}\|_{B,P_j^k} 
\|h\|_{\bar{A},Q_j^k} \|u^{1/p}\|_{A,Q_j^k} |E_j^k| \\
&\leq c\left(\sum_{j,k}\| f v^{1/p}\|_{D, P_j^k}^p\cdot
  |E_j^k|\right)^{1/p}  
\left(\sum_{j,k}\| h\|_{\bar{A}, Q_j^k}^{p'}\cdot |E_j^k|\right)^{1/p'}\\
 &\leq c \left(\sum_{j,k}\int_{E_j^k}M_{D}(f v^{1/p})(x)^p\,
   dx\right)^{1/p}  
\left(\sum_{j,k}\int_{E_j^k}M_{\bar{A}}h(x)^{p'}\, dx\right)^{1/p'}\\
&\leq  c \left(\int_{\R^n} M_D(fv^{1/p})(x)^p\, dx\right)^{1/p}
\left(\int_{\R^n} M_{\bar{A}}h(x)^{p'}\, dx\right)^{1/p'}\\
&\leq  c\| f\|_{L^p(v)}.
\end{align*}

Finally we estimate $J_4$.  We have 
\begin{multline*}
J_4\leq \sum_{j,k} \big(\inf_{y\in Q_j^k} T_*^{d} f(y) \big) \cdot \dashint_{Q_j^k} u(x)^{1/p}h(x)\,dx \cdot |E_j^k|\\
\leq c\int_{\R^n} T_*^{d} f(x) M^d(u^{1/p}h)(x)\, dx.
\end{multline*}
To estimate the right-hand term, we apply a reduction argument very
similar to the one given above to show that it will suffice to prove
\begin{multline} \label{eqn:max-sio}
 \int_{Q_N} \sup_{l\in \Z}|T_l^{d}f(x)-m_{T_l^{d}f}(Q_N)|
M^d(u^{1/p}h)(x)\, dx \\
\leq c\|f\|_{L^p(v)}\|h\|_{L^{p'}(\R^n)}.
\end{multline}
(For the details of this reduction for maximal dyadic singular
integrals, see \cite[Theorem~6.1]{CMP3}.)

To prove~\eqref{eqn:max-sio} we again use the Lerner decomposition
argument.  As was shown in~\cite{CMP3}, we have that
$$\sup_{l\in \Z} \omega_\lambda(T_l^{d}f, Q)\leq c\dashint_{Q^\tau} |f(x)|\, dx,$$
and so
$$\sup_{l\in \Z} M_{\lambda, Q}^{\sharp,d}(T_l^{d}f)(x)\leq c M^df(x).$$
Therefore, by Theorem~\ref{lerndec},
\begin{align*}
& \int_{Q_N} \sup_{l\in \Z}|T_l^{d}f(x)-m_{T_l^{d}f}(Q_N)| M^d(u^{1/p}h)(x)\, dx\\
&\qquad \leq  c\int_{\R^n} M^df(x)M^d(u^{1/p}h)(x)\, dx \\
& \qquad \qquad +\sum_{j,k} \dashint_{P_j^k} |f(x)|\, dx\cdot 
\dashint_{Q_j^k} M^d(u^{1/p}h)(x)\, dx \cdot |E_j^k|\\
& \qquad = c\ (J_5+J_6).
\end{align*}
(Note that the families of cubes $\{Q_j^k\}$ and $\{P_j^k\}=\{(\hat{Q}_j^k)^\tau\}$ are different from the families in the first part of the proof.)

To estimate  $J_5$ we use Lemma \ref{doublemax} to get
$$J_5\leq \|f\|_{L^p(v)}\|h\|_{L^{p'}(\R^n)}.$$
To estimate $J_6$, we argue as follows:
\begin{align*}
J_6
&=  \sum_{j,k} \dashint_{P_j^k} |f(x)|\, dx\cdot 
\dashint_{Q_j^k} M^d(u^{1/p}h)(x)\, dx \cdot |E_j^k|\\
&\leq  \sum_{j,k} \dashint_{P_j^k} |f(x)|\, dx\cdot 
\dashint_{Q_j^k} M^d(u^{1/p}h\chi_{Q_j^k})(x)\, dx \cdot |E_j^k|\\
& \qquad + \sum_{j,k} \dashint_{P_j^k} |f(x)|\, dx\cdot 
\dashint_{Q_j^k} M^d(u^{1/p}h\chi_{\R^n\backslash Q_j^k})(x)\, dx \cdot |E_j^k|\\
&= J_7+J_8.
\end{align*}

For $J_7$ we argue as we did in the estimate for $J_3$ above to get
\begin{align*}
J_7 
&= \sum_{j,k} \dashint_{P_j^k} |f(x)|\, dx\cdot 
\dashint_{Q_j^k} M^d(u^{1/p}h\chi_{Q_j^k})(x)\, dx \cdot |E_j^k| \\
&\leq  c\sum_{j,k} \dashint_{P_j^k} |f(x)|\, dx\cdot 
\|u^{1/p}h\|_{L\log L,Q_j^k} \cdot |E_j^k| \\
& \leq  c\sum_{j,k} \|f v^{1/p}\|_{\bar{B},P_j^k}
\|v^{-1/p}\|_{B,P_j^k} 
\|u^{1/p}\|_{A,Q_j^k} \|h\|_{C,Q_j^k} |E_j^k|\\
&\leq  c\|f\|_{L^p(v)}.
\end{align*}

To estimate $J_8$, first note that $ M^d(u^{1/p}h\chi_{\R^n\backslash
  Q_j^k})$ is constant on $Q_j^k$: for $x\in Q_j^k$,
$$M^d(u^{1/p}h\chi_{\R^n\backslash Q_j^k})(x)=
\sup_{{Q\in \D}\atop{Q\supsetneq Q_j^k}}\frac{1}{|Q|}\int_{Q\backslash Q_j^k}  |u(y)^{1/p}h(y)|\, dy.$$
Hence,
\begin{align*}
J_8 
& = \sum_{j,k} \dashint_{P_j^k} |f(x)|\, dx \cdot 
\Big( \inf_{y\in Q_j^k}M^d(u^{1/p}h\chi_{\R^n\backslash Q_j^k})(y)\Big)\cdot |E_j^k|\\
&\leq  c\sum_{j,k} \int_{E_j^k} M^df(x)M^d(u^{1/p}h)(x)\, dx\\
&\leq  c\int_{\R^n} M^df(x)M^d(u^{1/p}h)(x)\, dx\\
&\leq  c\|f\|_{L^p(v)},
\end{align*}
where the last inequality follows from Lemma \ref{doublemax}.
\end{proof}

\begin{remark} \label{exp2}
The second point at which we pick up exponential dependence on $\tau$
is in the estimates of $J_3$ and $J_7$ above.  In order to
use~\eqref{eqn:sharp-sio1} to estimate
\[ \|u^{1/p}\|_{A,Q_j^k}\|v^{-1/p}\|_{B,P_j^k}, \]
we have to replace $Q_j^k$ by $P_j^k$ in the first term.  Since
$|P_j^k|=2^{n(\tau+1)}|Q_j^k|$, by the homogeneity of the norm we can
do so at the cost of a constant $2^{n(\tau+1)/p}$
(see~\cite[Section~5.2]{CMP4}).  
\end{remark}

\section{Proof of Theorem~\ref{sharpcom}} 
\label{section:proof-frac-onewt}

Our proof is similar to that for commutators of singular integrals
in~\cite{CPP}.  By the sharp, off-diagonal extrapolation theorem in
\cite{LMPT}, it suffices to prove~\eqref{sharpbound} in the particular
case
\[ \frac{2}{p} = 1+ \frac{\alpha}{n}.  \]
It follows at once that in this case, $q=p'$.   

By a standard approximation argument, we may assume $f\in
C_c^\infty(\R^n)$.  Given this assumption, we can represent the
commutator using the Cauchy integral formula: for all $\epsilon>0$,
\begin{equation*}
[b,I_\al]f(x)=\frac{1}{2\pi i}\int_{|\zeta|=\ep}\frac{e^{\zeta
    b}I_\al(e^{-\zeta b}f)(x)}{\zeta^2} d\zeta.
\end{equation*}
(See~\cite{ABKP,CRW}.)  Fix $w\in A_{p,p'}$; then  by Minkowski's integral
inequality we have that
\begin{equation} \label{eqn:Cauchy}
 \|[b,I_\alpha]f\|_{L^q(w^q)} \leq \frac{1}{2\pi}
\int_{|\zeta|=\epsilon} |\zeta|^{-2} \|e^{\zeta b}I_\alpha(e^{-\zeta
  b}f)\|_{L^{p'}(w^{p'})} \,d|\zeta|. 
\end{equation}

We now estimate 
$$\|e^{\zeta b}I_\al(e^{-\zeta b}f)\|_{L^{p'}(w^{p'})}=
\|I_\al(e^{-\zeta b}f)\|_{L^{p'}(e^{p'Re\, \zeta\, b}w^{p'})}.$$  
Since $q=p'$, it
follows from the definitions that since $w\in A_{p,p'}$, then $w^{p'} \in
A_2$ and 
\[ [w]_{A_{p,p'}}=[w^{p'}]_{A_2}=[w^{-p'}]_{A_2}. \]  
Therefore, both $w^{p'}$ and $w^{-p'}$ satisfy the
reverse H\"older inequality.  In particular, by the sharp 
reverse H\"older inequality in \cite[Lemma~8.1]{P3}  (see also
\cite[Lemma~2.3]{CPP}), for every cube $Q$,
\[ \left(\dashint_Q w(x)^{\pm p'r}\,dx\right)^{1/r} \leq 2\dashint_Q
w(x)^{\pm p'}\,dx, \]
where 
\[ r = 1 + \frac{1}{2^{n+5}[w]_{A_{p,p'}}}. \]
If we first apply H\"older's inequality  with this exponent to the two integrals in the
definition of $A_{p,p'}$ and then apply the reverse H\"older
inequality, we get that
$$[e^{Re\, \zeta\, b}w]_{A_{p,p'}}\leq
[w^r]_{A_{p,p'}}^{1/r}[e^{r'Re\, \zeta\, b}]_{A_{p,p'}}^{1/r'}
\leq  4[w]_{A_{p,p'}} [e^{r'Re\, \zeta\, b}]_{A_{p,p'}}^{1/r'}.$$

Fix $\epsilon>0$ such that 
\begin{equation*}\label{ep} 
\ep=\frac{2^{-(n+2)}}{r'\|b\|_{BMO}} 
\approx  ([w]_{A_{p,p'}}\|b\|_{BMO})^{-1}.
\end{equation*}
Then it follows from Theorem~\ref{sharpJN} (see \cite[Lemma~2.2]{CPP})
that $e^{r'Re\, \zeta\, b}\in A_{p,p'}$ and $ [e^{r'Re\, \zeta\,
  b}]_{A_{p,p'}} \leq c_n$, (where $c_n$ is the constant in
Theorem~\ref{sharpJN}). 

Hence, if we combine this estimate with the sharp inequality for
the fractional integral operator \eqref{sharpfrac}, we get
\begin{multline*}
\|I_\al(e^{-\zeta b}f)\|_{L^{p'}(e^{p'Re\, \zeta\, b}w^{p'})} \\
\leq c [e^{Re\, \zeta \, b}w]_{A_{p,p'}}^{1-\frac{\al}{n}} 
\|f e^{-\zeta b}\|_{L^{p}(w^{p}e^{pRe\, \zeta\, b})}
\leq  c[w]_{A_{p,p'}}^{1-\frac{\al}{n}}\|f\|_{L^{p}(w^p)}.
\end{multline*}
This inequality together with \eqref{eqn:Cauchy} then yields
$$\|[b,I_\al]f\|_{L^{p'}(w^{p'})}
\leq c{\ep^{-1}}[w]_{A_{p,p'}}^{1-\frac{\al}{n}}\|f\|_{L^{p}(w^p)}
=c\|b\|_{BMO}[w]_{A_{p,p'}}^{2-\frac{\al}{n}}\|f\|_{L^{p}(w^p)}.$$
This completes the proof.

\section{Proof of Theorem \ref{fraccomm}}
\label{section:proof-fracccomm}

By duality, it will suffice
to prove that for all $f\in L^p(v)$ and all $h\in L^{q'}(\R^n)$,
$\|h\|_{q'}=1$, 
\[ \int_{\R^n} |[b,I^d_\al]f(x)| h(x)u(x)^{1/q}\, dx \leq
c\|f\|_{L^p(v)}. \]
By a standard approximation argument we may assume $f,\,h \in
L_c^\infty$.   Further, since $I_\alpha^d$ is a positive operator, we
may assume $f$ and $h$ are non-negative. 

Fix $f$ and $h$.  Then
\begin{align*}
& \int_{\R^n} |[b,I^d_\al]f(x)| h(x)u(x)^{1/q}\, dx\\
 & \qquad \leq \sum_{Q\in \D} \frac{|Q|^{\al/n}}{|Q|}\int_Q\int_Q |b(x)-b(y)|f(y)h(x)u(x)^{1/q}\, dy\, dx \\
& \qquad \leq  \sum_{Q\in \D} {|Q|^{\al/n}}\dashint_Q
|b(x)-a_b(Q)|h(x)u(x)^{1/q}\, dx\cdot 
\int_Q f(y)\, dy \\
& \qquad \qquad + \sum_{Q\in \D} {|Q|^{\al/n}}\dashint_Q |b(y)-a_b(Q)|f(y)\,
dy\cdot 
\int_Q h(x)u(x)^{1/q}\, dx\\
& \qquad = K_1+K_2.
\end{align*}
We will estimate $K_1$; the estimate for $K_2$ is gotten in the same way,
exchanging the roles of $f$ and $u^{1/p}h$.  By H\"older's inequality
(Lemma~\ref{GenHolder}) and the exponential integrability of $BMO$
functions (Theorem~\ref{sharpJN}), we have that 
\begin{multline*}
\dashint_Q|b(x)-a_b(Q)|h(x)u(x)^{1/q}\, dx  \\
\leq c\|b-a_b(Q)\|_{\exp L, Q}\|hu^{1/q}\|_{L\log L,Q}
\leq c\|b\|_{BMO}\|hu^{1/q}\|_{L\log L, Q}.
\end{multline*}
Hence, 
\[  K_1 \leq c\|b\|_{BMO} \sum_{Q\in \D}
|Q|^{\al/n}\|hu^{1/q}\|_{L\log L,Q}\cdot \int_Q f(y)\, dy.  \] 
By an argument in \cite{CMP1} (see also P\'erez~\cite{perez94} ) we
may replace the sum over all dyadic cubes with the sum over the
Calder\'on-Zygmund cubes of $f$.  More precisely, for each $k\in \Z$,
let $\{Q_j^k\}$ be the set of disjoint maximal dyadic cubes such that
$$\Omega_k =\{x:M^df(x)>a^k\}=\bigcup_j Q_j^k,$$
where $a=4^n$.  Let $E_j^k=Q_j^k\backslash \Omega_{k+1}$;  then the
sets $E_j^k$ are pairwise disjoint for all $j$ and $k$, and
$|E_j^k|\geq \frac{1}{2}|Q_j^k|$. Then
\begin{multline*}
\sum_{Q\in \D}
|Q|^{\al/n}\|hu^{1/q}\|_{L\log L,Q}\cdot \int_Q f(y)\, dy  \\\leq c
\sum_{j,k}
|Q_j^k|^{\al/n}\|hu^{1/q}\|_{L\log L,Q_j^k}\cdot \int_{Q_j^k} f(y)\,
dy.
\end{multline*}
%
%
Define the Young function $C_q$ by
\[ C_q(t)=\frac{t^{q'}}{\log(e+t)^{1+(q'-1)\delta}}.  \]
By the same argument as in the proof of Lemma~\ref{Orliczmax}, we have
that $C_q\in B_{q'}$ and $A_q^{-1}(t)C_q^{-1}(t)\leq c\Phi^{-1}(t)$,
where $\Phi(t)=t\log(e+t)$.   Further, by the same lemma, $\bar{B}\in
B_p$.  Therefore, by the generalized
H\"older's inequality (Lemma~\ref{GenHolder}),
\eqref{twowcondfrac}, and  H\"older's inequality with respect to
the summation,
\begin{align*}
K_1 &  \leq  c\|b\|_{BMO} \\
& \qquad \times \sum_{j,k} |Q_j^k|^{\al/n}
\|h\|_{C,Q_j^k}\|u^{1/q}\|_{A_q,Q_j^k} \|fv^{1/p}\|_{\bar{B},Q_j^k} \|v^{-1/p}\|_{B,Q_j^k} \cdot |Q_j^k| \\
&  \leq  c\|b\|_{BMO}\sum_{j,k}\|h\|_{C_q,Q_j^k}
\|fv^{1/p}\|_{\bar{B},Q_j^k}  \cdot |Q_j^k|^{1/q'+1/p}\\
&  \leq
c\|b\|_{BMO}\left(\sum_{j,k}\|fv^{1/p}\|_{\bar{B},Q_j^k}^p
|E_j^k|\right)^{1/p}  \cdot
\left(\sum_{j,k}\|h\|_{C_q,Q_j^k}^{p'}|E_j^k|^{p'/q'}\right)^{1/p'}.
\end{align*}
Since $p\leq q$, $p'/q'\geq 1$.  Therefore, by convexity and Theorem~\ref{Bpmax},
\begin{align*}
K_1 &  \leq
c\|b\|_{BMO}\left(\sum_{j,k}\|fv^{1/p}\|_{\bar{B},Q_j^k}^p
|E_j^k|\right)^{1/p}  \cdot \left(\sum_{j,k}\|h\|_{C_q,Q_j^k}^{q'}|E_j^k|\right)^{1/q'} \\
&  \leq  c\|b\|_{BMO}\left(\int_{\R^n}
  M_{\bar{B}}(fv^{1/p})(x)^p\, dx\right)^{1/p}  
\cdot \left(\int_{\R^n}M_{C_q}h(x)^{q'}\, dx\right)^{1/q'} \\
&  \leq  c\|b\|_{BMO} \left(\int_{\R^n} |f(x)|^pv(x)\,
  dx\right)^{1/p}  
\cdot \left(\int_{\R^n}|h(x)|^{q'}\, dx\right)^{1/q'} \\
&   = c\|b\|_{BMO}\|f\|_{L^p(v)}.
\end{align*}
This completes the proof.

\section{Proofs of Theorems~\ref{thm:factored-sio} and \ref{thm:factored-frac}}
\label{section:factored-proofs}

We need two lemmas, both taken from~\cite{CMP4}.

\begin{lemma}[\mbox{\cite[Theorems~6.4,~6.16]{CMP4}}]\label{lemma:reverse-factorization}
  Given $\alpha$, $0\leq \alpha<n$, and $p$, $1<p<n/\alpha$, and Young
  functions $A$ and $B$, define two new Young functions
  $\Phi(t)=A(t^{1/p})$ and $\Psi(t)=B(t^{1/p'})$.  Then for any
  non-negative, locally integrable functions $w_1,\,w_2$, the pair of
  factored weights
\[ (\tilde{u},\tilde{v})=(w_1(M_{\Psi,\alpha}w_2)^{1-p}, (M_{\Phi,\alpha}w_1)w_2^{1-p} ) \]
satisfies
\begin{equation} \label{eqn:rf-ap}
 \sup_Q |Q|^{\alpha/n}\|\tilde{u}^{1/p}\|_{A,Q}\|\tilde{v}^{-1/p}\|_{B,Q} <
\infty.
\end{equation}
\end{lemma}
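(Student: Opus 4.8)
The plan is to pass from the $A$- and $B$-norms to Luxemburg norms built from $\Phi$ and $\Psi$, and then to exploit the trivial lower bound for the fractional Orlicz maximal operators obtained by testing their defining suprema against the cube $Q$ itself; the powers of $|Q|^{\alpha/n}$ and of the two $Q$-norms will then cancel exactly. First I would record the scaling identity linking the $A$-norm and the $\Phi$-norm: since $\Phi(t)=A(t^{1/p})$, the substitution $\mu=\lambda^{p}$ in the definition of the normalized Luxemburg norm gives, for non-negative $g$, $\|g\|_{A,Q}=\|g^{p}\|_{\Phi,Q}^{1/p}$, and likewise $\|g\|_{B,Q}=\|g^{p'}\|_{\Psi,Q}^{1/p'}$ because $\Psi(t)=B(t^{1/p'})$. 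Applying these with $g=\tilde u^{1/p}$ and $g=\tilde v^{-1/p}$, and using the elementary identities $p'/p=p'-1$ and $(1-p)(1-p')=1$ to simplify the exponents, one finds
\[ \|\tilde u^{1/p}\|_{A,Q}=\big\|\,w_1(M_{\Psi,\alpha}w_2)^{1-p}\,\big\|_{\Phi,Q}^{1/p},\qquad \|\tilde v^{-1/p}\|_{B,Q}=\big\|\,w_2(M_{\Phi,\alpha}w_1)^{1-p'}\,\big\|_{\Psi,Q}^{1/p'}. \]

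Next I would use the definition of the fractional Orlicz maximal operator: for every $x\in Q$, testing the supremum against the cube $Q$ gives $M_{\Psi,\alpha}w_2(x)\ge|Q|^{\alpha/n}\|w_2\|_{\Psi,Q}$ and $M_{\Phi,\alpha}w_1(x)\ge|Q|^{\alpha/n}\|w_1\|_{\Phi,Q}$. Since $1-p$ and $1-p'$ are negative, raising to these powers reverses the inequalities, so on $Q$ the factor $(M_{\Psi,\alpha}w_2)^{1-p}$ is bounded above by the constant $(|Q|^{\alpha/n}\|w_2\|_{\Psi,Q})^{1-p}$, and symmetrically for $(M_{\Phi,\alpha}w_1)^{1-p'}$. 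Pulling these constants out of the homogeneous, monotone Luxemburg norms yields
\[ \big\|\,w_1(M_{\Psi,\alpha}w_2)^{1-p}\,\big\|_{\Phi,Q}\le (|Q|^{\alpha/n}\|w_2\|_{\Psi,Q})^{1-p}\|w_1\|_{\Phi,Q} \]
together with the symmetric bound for the other factor, with no loss of constant.

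Finally I would substitute these bounds into the bump product and collect exponents. Abbreviating $a=|Q|^{\alpha/n}$, $X=\|w_1\|_{\Phi,Q}$, $Y=\|w_2\|_{\Psi,Q}$, the quantity in \eqref{eqn:rf-ap} is dominated by $a\,\big((aY)^{1-p}X\big)^{1/p}\big((aX)^{1-p'}Y\big)^{1/p'}$; using $(1-p)/p=-1/p'$, $(1-p')/p'=-1/p$, and $1/p+1/p'=1$, the total powers of $a$, of $X$, and of $Y$ are each $0$, so the product is bounded by an absolute constant, in fact by $1$, uniformly in $Q$. This is exactly \eqref{eqn:rf-ap}.

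I do not anticipate a genuine difficulty here: once the scaling identity for the norms is in hand, the estimate is a short computation. The only points demanding care are checking that no $Q$-dependent constant is introduced in passing from $A$- and $B$-norms to $\Phi$- and $\Psi$-norms or in extracting constants from the Luxemburg norms, and disposing of the degenerate cases in which $w_1$ or $w_2$ vanishes on $Q$ or one of the fractional maximal functions is identically infinite; in each such case $\tilde u$ or $\tilde v$ degenerates and the bump product is trivially finite, so these cases can be set aside at the outset.
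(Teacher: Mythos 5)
Your proposal is correct and follows exactly the same route as the paper: pass from the $A$- and $B$-Luxemburg norms to $\Phi$- and $\Psi$-norms via the scaling identity, bound $M_{\Psi,\alpha}w_2$ and $M_{\Phi,\alpha}w_1$ from below on $Q$ by $|Q|^{\alpha/n}\|w_2\|_{\Psi,Q}$ and $|Q|^{\alpha/n}\|w_1\|_{\Phi,Q}$, pull the resulting constants (with negative exponents) out of the norm, and observe that all powers of $|Q|^{\alpha/n}$, $\|w_1\|_{\Phi,Q}$, $\|w_2\|_{\Psi,Q}$ cancel. Your version is in fact slightly sharper than the printed one, since $\|g\|_{A,Q}=\|g^p\|_{\Phi,Q}^{1/p}$ holds with exact equality rather than the ``$\approx$'' the paper records, and you make the degenerate cases explicit.
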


For clarity and completeness, we include the short proof. 

\begin{proof}
By the definition of the Orlicz maximal operators and the Luxemburg
norm,
\begin{multline*}
 \|\tilde{u}^{1/p}\|_{A,Q} \approx \|\tilde{u}\|^{1/p}_{\Phi,Q}  \\ = 
\|w_1(M_{\Psi,\alpha}w_2)^{1-p}\|^{1/p}_{\Phi,Q} \leq
|Q|^{-\alpha/(np')}\|w_1\|^{1/p}_{\Phi,Q} \|w_2\|_{\Psi,Q}^{-1/p'}.
\end{multline*}
In exactly the same way we have that 
 \[ \|\tilde{v}^{-1/p}\|_{B,Q}\leq |Q|^{-\alpha/(np)}\|w_1\|^{-1/p}_{\Phi,Q}
\|w_2\|_{\Psi,Q}^{1/p'}. \]
The desired conclusion follows at once.
\end{proof}

To state the next result,  recall that the
Fefferman-Stein sharp maximal operator is defined by
\[ M^\#f(x) = \sup_{Q\ni x} \dashint_Q |f(y)-a_f(Q)|\,dy. \]
Given $\delta$, $0<\delta<1$, let $M_\delta^\#f(x)=
M^\#(|f|^\delta)(x)^{1/\delta}$. 

\begin{lemma}[\mbox{\cite[Theorem~9.10]{CMP4}}] \label{lemma:sharp-ineq}
Let $S$ and $T$ be a pair of operators such that for all $\delta$, $0<\delta<1$,
$M_\delta^\#(Tf)(x)\leq c_\delta Sf(x)$.  Then for all $p$,
$1<p<\infty$, if the pair $(u,v)$ satisfies
\[ \sup_Q \|u^{1/p}\|_{A,Q}\|v^{-1/p}\|_{p',Q} < \infty, \]
where $A(t)=t^p\log(e+t)^{p-1+\nu}$, $\nu>0$, then 
\[ \|Tf\|_{L^{p,\infty}(u)} \leq \|Sf\|_{L^{p,\infty}(v)}. \]
\end{lemma}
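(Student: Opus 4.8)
The plan is to carry out a two-weight version of the Fefferman--Stein argument, along the lines of the strong-type estimates in~\cite{CMP4}. The proof begins with a string of reductions. By the one-third trick one may replace $M^\#$ and $M$ throughout by dyadic versions attached to a single shifted grid. A density argument lets us take $f\in C_c^\infty(\R^n)$, so that $Tf$ lies in $L^{p_0}(\R^n)$ for suitable $p_0$; then a localization argument identical to the one in the proof of Theorem~\ref{main} --- Fatou's inequality, the quadrant cubes $Q_{N,j}$, and the fact that $m_{|Tf|^\delta}(Q_{N,j})\to 0$ --- reduces the claim to a bound, uniform in $N$, over a single large dyadic cube $Q_0$. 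Finally, since $|Tf(x)|\le M_\delta^d(Tf)(x)$ a.e.\ for every $0<\delta<1$ (where $M_\delta^d g = M^d(|g|^\delta)^{1/\delta}$), it suffices to estimate $\|M_\delta^d(Tf)\|_{L^{p,\infty}(u)}$ for one convenient $\delta\in(0,1)$.

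For the core estimate I would fix $\lambda>0$ and perform a Calder\'on--Zygmund decomposition $\{M_\delta^d(Tf)>\lambda\}=\bigcup_j Q_j$ into maximal dyadic cubes, so that $\lambda^\delta<\dashint_{Q_j}|Tf|^\delta\le 2^n\lambda^\delta$ and the parent $\widehat Q_j$ satisfies $\dashint_{\widehat Q_j}|Tf|^\delta\le\lambda^\delta$. On each $Q_j$ one runs the good-$\lambda$ dichotomy in a \emph{localized} form, since the usual global version --- which measures the ``good'' set with a weight in $A_\infty$ --- is unavailable when $u$ is an arbitrary weight. Either $\dashint_{\widehat Q_j}\bigl||Tf|^\delta-a_{|Tf|^\delta}(\widehat Q_j)\bigr|>\gamma\lambda^\delta$, in which case $M_\delta^\#(Tf)>\gamma^{1/\delta}\lambda$ on all of $Q_j$, and the hypothesis $M_\delta^\#(Tf)\le c_\delta Sf$ (valid for \emph{every} $\delta$, which is what makes it usable) forces $Q_j\subset\{Sf>c\lambda\}$; or the oscillation of $|Tf|^\delta$ over $\widehat Q_j$ is at most $\gamma\lambda^\delta$, and then the weak $(1,1)$ bound for $M^d$ applied to $|Tf|^\delta-a_{|Tf|^\delta}(Q_j)$ shows that $M_\delta^d(Tf)$ exceeds $C_n\lambda$ only on an exceptional set $S_j\subset Q_j$ with $|S_j|\le c_n\gamma|Q_j|$. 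The local mean oscillation facts behind this dichotomy are exactly those in Lerner's decomposition, Theorem~\ref{lerndec}, applied to $|Tf|^\delta$ (so that only rearrangements of an $L^1_{\mathrm{loc}}$ function enter).

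It then remains to sum. For the first family of cubes, together with the set $\{M_\delta^\#(Tf)>\gamma\lambda\}\subset\{Sf>c_\gamma\lambda\}$, one transfers a level set of $Sf$ from $u$-measure to $v$-measure: $u(\{Sf>t\})\le u(\{M^d\chi_{\{Sf>t\}}>\tfrac12\})\le c\,v(\{Sf>t\})$, the last step being the two-weight weak $(p,p)$ inequality for $M$ --- which holds because the hypothesis contains the two-weight $A_p$ condition, since $\|u^{1/p}\|_{p,Q}\le\|u^{1/p}\|_{A,Q}$ --- and this bounds those contributions by $\|Sf\|_{L^{p,\infty}(v)}^p$. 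The exceptional cubes are handled by the generalized H\"older inequality, $u(S_j)\le|Q_j|\,\|u^{1/p}\|_{A,Q_j}^p\,\|\chi_{S_j}\|_{\bar A,Q_j}^p$, with $\|\chi_{S_j}\|_{\bar A,Q_j}=\bar A^{-1}(|Q_j|/|S_j|)^{-1}\to0$ as $\gamma\to0$, followed by the bump hypothesis to replace $\|u^{1/p}\|_{A,Q_j}$ by a constant times $\|v^{-1/p}\|_{p',Q_j}^{-1}$; summing over $j$ via the pairwise disjoint Calder\'on--Zygmund sets $E_j$, H\"older in the summation index, and the $L^{p'}$-boundedness of the Orlicz maximal operator $M_{\bar A}$ --- this is precisely where $\nu>0$ is needed, because $\bar A\in B_{p'}$ if and only if $\nu>0$ (Theorem~\ref{Bpmax}) --- produces a term carrying a factor that is small in $\gamma$, which is then absorbed. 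The main obstacle, requiring the most care, is exactly this two-weight substitute for the Fefferman--Stein good-$\lambda$ estimate: one cannot invoke any $A_\infty$ property of $u$, so the comparison of $M_\delta^d(Tf)$ with $M_\delta^\#(Tf)$ must be done cube by cube and fused with the bump inequality, and all of it must be carried out in the weak-type framework, where the familiar device of iterating and summing $L^p$ estimates is not available.
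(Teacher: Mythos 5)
Your overall shape --- Calder\'on--Zygmund decomposition of $\{M_\delta^d(Tf)>\lambda\}$, a localized good-$\lambda$ dichotomy with small parameter $\gamma$, and the transfer of $u$-measure to $v$-measure for level sets of $Sf$ via $\{Sf>t\}\subset\{M^d\chi_{\{Sf>t\}}>\tfrac12\}$ together with the two-weight weak $(p,p)$ bound for $M$ --- is correct as far as it goes, and the first branch of the dichotomy (large oscillation on $\widehat Q_j$, hence $Q_j\subset\{Sf>c_\gamma\lambda\}$) is handled cleanly. The gap is in closing the second branch. You correctly obtain $|S_j|\le c_n\gamma|Q_j|$ and then, via generalized H\"older, a bound of the form $u(S_j)\le c\,\epsilon(\gamma)\,|Q_j|\,\|u^{1/p}\|_{A,Q_j}^p$ with $\epsilon(\gamma)\to0$; but you never say what this small quantity is absorbed \emph{into}. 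The standard good-$\lambda$ iteration needs $\sum_j u(S_j)\le\epsilon(\gamma)\,u\bigl(\{M_\delta^d(Tf)>\lambda\}\bigr)=\epsilon(\gamma)\sum_j u(Q_j)$, which would require $|Q_j|\,\|u^{1/p}\|_{A,Q_j}^p\lesssim u(Q_j)$; since $A(t)\ge t^p$ for large $t$ this inequality is actually reversed, and recovering it would amount to a reverse-H\"older (i.e.\ $A_\infty$) property of $u$ --- precisely what is \emph{not} available here. Your proposed alternative (use the bump to replace $\|u^{1/p}\|_{A,Q_j}$ by $\|v^{-1/p}\|_{p',Q_j}^{-1}$, then H\"older in $j$ and the $L^{p'}$-boundedness of $M_{\bar A}$) is imported wholesale from the strong-type, duality arguments of Section~\ref{section:proof-main-sio}, where a test function $h\in L^{p'}(\R^n)$ is present and $M_{\bar A}h$ has somewhere to live. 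In a weak-type argument there is no such $h$, and after the bump substitution the quantity $\sum_j|Q_j|\|v^{-1/p}\|_{p',Q_j}^{-p}$ has no visible connection to $\lambda^{-p}\|Sf\|_{L^{p,\infty}(v)}^p$: the cubes $Q_j$ were selected from $|Tf|^\delta$ alone and carry no information about $Sf$ or $v$. (A smaller bookkeeping issue: the Young function in $u(S_j)\le c|Q_j|\|u^{1/p}\|_{A,Q_j}^p\|\chi_{S_j}\|_{\cdot,Q_j}$ should be $\bar\Phi$ with $\Phi(t)=A(t^{1/p})\approx t\log(e+t)^{p-1+\nu}$, not $\bar A$.)

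This is essentially the reason the proof in \cite{CMP4} does not run a direct good-$\lambda$ argument against the pair $(u,v)$. There, one first proves the one-weight Coifman--Fefferman inequality $\|M_\delta^d g\|_{L^{p_0}(w)}\le C[w]_{A_\infty}\|M_\delta^\# g\|_{L^{p_0}(w)}$ for all $w\in A_\infty$ (the $A_\infty$ property of $w$ is exactly what lets the good-$\lambda$ iteration close), and then passes to the two-weight weak-type bump inequality by the $A_\infty$, weak-type extrapolation machinery developed in that book; the auxiliary $A_\infty$ weight is manufactured by the Rubio de Francia iteration algorithm, so the bump hypothesis never has to interact directly with the stopping cubes. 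The hypothesis $M_\delta^\#(Tf)\le c_\delta Sf$ is only inserted at the very end. If you want a bare-hands proof in the spirit of your sketch, the missing ingredient is some mechanism for supplying the $A_\infty$ structure that controls $\sum_j u(S_j)$; with the stated hypotheses on $(u,v)$ alone your sketch does not close.
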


To apply this lemma, we will need the following sharp function
inequalities:
\begin{itemize}
\item (Adams \cite{adams75}) For $0<\alpha<n$ and $0<\delta\leq 1$, 
\begin{equation} \label{eqn:sharp1}
M_\delta^\#(I_\alpha f)(x) \leq cM_\alpha
  f(x).
\end{equation}

\item (\'Alvarez and P\'erez \cite{alvarez-perez94}) If $T$ is a
  Calder\'on-Zygmund singular integral operator, then
\begin{equation} \label{eqn:sharp2}
M_\delta^\#(Tf)(x)\leq Mf(x).
\end{equation}

\item (P\'erez \cite{perez97})  If $T$ is a Calder\'on-Zygmund singular integral operator and $b\in
  BMO$, then for $0<\delta<\epsilon<1$, 
\begin{equation} \label{eqn:sharp3}
 M_\delta^\#([b,T]f)(x) \leq
c\|b\|_{BMO}\left(M_\epsilon(Tf)(x)+M_{L\log L}f(x)\right).
\end{equation}

\item (\cite{CF}) For $0<\alpha<n$, $0<\delta\leq 1$, and $b\in BMO$, 
\begin{equation} \label{eqn:sharp4}
 M_\delta^\#([b,I_\alpha]f)(x) \leq c\|b\|_{BMO}\left( I_\alpha
  f(x)+M_{L\log L,\alpha}f(x)\right).
\end{equation}

\item (\cite{cruz-uribe-martell-perez04})  
\begin{equation} \label{eqn:sharp5} 
M_\delta^\#(Mf)(x) \leq cM^\#f(x).
\end{equation}
\end{itemize}

If we combine inequalities \eqref{eqn:sharp2} and \eqref{eqn:sharp5},
we get another sharp function inequality.  Fix $0<\delta<\epsilon<1$,
and let $\sigma=\delta/\epsilon<1$.  Then
\begin{multline} \label{eqn:multi-sharp}
M_\delta^\#(M_\epsilon(Tf))(x) =
M^\#(M(|Tf|^\epsilon)^\sigma)(x)^{\frac{1}{\sigma}\frac{1}{\epsilon}} \\
= M_\sigma^\#(M(|Tf|^\epsilon))(x)^{\frac{1}{\epsilon}} 
\leq cM^\#(|Tf|^\epsilon)(x) ^{\frac{1}{\epsilon}}  \\ =
cM_\epsilon^\#(Tf)(x) \leq cMf(x).
\end{multline}

\medskip

\begin{proof}[Proof of Theorem~\ref{thm:factored-sio}]
Recall that $\Phi(t)=t\log(e+t)^{p+\delta}$.  
Let $\Phi_0(t)=t\log(e+t)^{p-1+\delta/2}$ and
$\Phi_1(t)=t\log(e+t)^{2p-1+\delta}$.   Then by a result of Carozza
and Passarelli di Napoli \cite{carozza-passarelli96} (see also
\cite[Theorem 5.26]{CMP4}), we have that for any function $h$,
\[ M_{\Phi_0}(M_{\Phi_0}h)(x) \leq cM_{\Phi_1}h(x) \]
and
\[ M(M_{\Phi_1}h)(x) \leq cM_{\Phi}h(x). \]
Similarly, recall that $\Psi(t)=t\log(e+t)^{p'+1}$.  If we let
$\Psi_0(t)=t\log(e+t)^{p'}$ and $\Psi_1(t)=t\log(e+t)^{p'-1}$, then 
\[ M(M_{\Psi_1}h)(x) \leq cM_{\Psi_0}h(x), \qquad  M(M_{\Psi_0}h)(x) \leq cM_{\Psi}h(x).\]
By Lemma~\ref{lemma:reverse-factorization}, the pair
\[ \big(w_1(M(M_{\Psi_0}w_2))^{1-p},
M_{\Phi_0}w_1(M_{\Psi_0}w_2)^{1-p}\big) \]
satisfies \eqref{eqn:rf-ap} with $A(t)=t^p\log(e+t)^{p-1+\delta/2}$
and $B(t)=t^{p'}$.  Therefore, by Lemma~\ref{lemma:sharp-ineq} and \eqref{eqn:sharp3},
\begin{align*}
\|[b,T]f\|_{L^{p,\infty}(\tilde{u})}
& = \|[b,T]f\|_{L^{p,\infty}(w_1(M_\Psi w_2)^{1-p})} \\
& \leq c \|[b,T]f\|_{L^{p,\infty}(w_1(M(M_{\Psi_0} w_2))^{1-p})} \\
& \leq c\|b\|_{BMO}\bigg(\|M_\epsilon(Tf)\|_{L^{p,\infty}(M_{\Phi_0}w_1(M_{\Psi_0}
  w_2)^{1-p})} \\
& \quad + \|M_{L\log L}f\|_{L^{p,\infty}(M_{\Phi_0}w_1(M_{\Psi_0}
  w_2)^{1-p})}\bigg).
\end{align*}

We estimate each of the final terms separately.  By
Lemma~\ref{lemma:reverse-factorization} the pair
\[ \big(M_{\Phi_0}w_1(M(M_{\Psi_1}w_2))^{1-p}, 
M_{\Phi_0}(M_{\Phi_0}w_1)(M_{\Psi_1}w_2)^{1-p}\big) \]
again satisfies \eqref{eqn:rf-ap} with $A(t)=t^p\log(e+t)^{p-1+\delta/2}$
and $B(t)=t^{p'}$.   Similarly, the pair
\[  \big(M_{\Phi_1}w_1M_{\Psi_1}w_2)^{1-p}, 
M(M_{\Phi_1}w_1)w_2^{1-p}\big) \]
satisfies \eqref{eqn:rf-ap} with $A(t)=t^p$
and $B(t)=t^{p'}$.  In particular, this pair satisfies the two-weight
$A_p$ condition.  Therefore, by Lemma~\ref{lemma:sharp-ineq} and
\eqref{eqn:multi-sharp}, and by the two-weight, weak $(p,p)$
inequality for the maximal operator,
\begin{align*}
\|M_\epsilon(Tf)\|_{L^{p,\infty}(M_{\Phi_0}w_1(M_{\Psi_0}
  w_2)^{1-p})}
& \leq c \|M_\epsilon(Tf)\|_{L^{p,\infty}(M_{\Phi_0}w_1(M(M_{\Psi_1}
  w_2))^{1-p})} \\
& \leq c
\|Mf\|_{L^{p,\infty}(M_{\Phi_0}(M_{\Phi_0}w_1)(M_{\Psi_1}w_2)^{1-p})}
\\
& \leq c
\|Mf\|_{L^{p,\infty}(M_{\Phi_1}w_1(M_{\Psi_1}w_2)^{1-p})} \\
& \leq c
\|f\|_{L^{p,\infty}(M(M_{\Phi_1}w_1)w_2^{1-p})} \\
& \leq c \|f\|_{L^{p}(M_{\Phi}w_1w_2^{1-p})} \\
& = c\|f\|_{L^p(\tilde{v})}.
\end{align*}

The estimate for the second term is simpler.  By
Lemma~\ref{lemma:reverse-factorization}, the pair
\[ \big(M_{\Phi_0}w_1(M_{\Psi_0}w_2)^{1-p},
M(M_{\Phi_0}w_1)w_2^{1-p}\big) \]
satisfies \eqref{eqn:rf-ap} with $A(t)=t^p$
and $B(t)=t^{p'}\log(e+t)^{p'}$.   Therefore, by
Lemma~\ref{thm:orlicz-weak},
\begin{align*}
\|M_{L\log L}f\|_{L^{p,\infty}(M_{\Phi_0}w_1(M_{\Psi_0}
  w_2)^{1-p})} 
& \leq c\|f\|_{L^p(M(M_{\Phi_0}w_1)w_2^{1-p})} \\
& \leq c\|f\|_{L^p(M_{\Phi}w_1w_2^{1-p})} \\
& = c\|f\|_{L^p(\tilde{v})}.
\end{align*}
\end{proof}

\begin{proof}[Proof of Theorem~\ref{thm:factored-frac}]
The proof is nearly the same as the proof of
Theorem~\ref{thm:factored-sio}, except that instead of having to
introduce the supplementary maximal operators $M_{\Psi_0}$ and
$M_{\Psi_1}$, we use the fact that $M_{\Psi,\alpha}w_2 \in A_1$ (see
\cite[Proposition~6.15]{CMP4}), so
$M(M_{\Psi,\alpha}w_2)\approx M_{\Psi,\alpha}w_2$.    Given this we
can repeat the steps of the above proof, using the appropriate sharp
function inequalities for $[b,I_\alpha]$ and $I_\alpha$.
\end{proof}

\section{Sharp examples}
\label{section:sharp-examples}

\subsection{Sharp two-weight condition for $[b,T]$}

The example that shows that in Theorem \ref{main} we cannot take
$\delta=0$ was actually constructed in \cite{perez97}.  There it was
shown that~\eqref{2weightcom} is false for the Hilbert transform when
we take the  pair of weights
$(u,M_{\Phi}u)$ where $\Phi(t)=t\log(e+t)^{2p-1}$, $p>1$ an integer.
By Lemma~\ref{lemma:reverse-factorization} this pair
satisfies~\eqref{twowcond} with $\delta=0$.

\subsection{Sharp two-weight condition for $[b,I_\al]$}
We show that we may not take $\delta=0$ in Theorem \ref{fraccomm} when
$p=q=k$ for a positive integer $k$, $1<k<n/\alpha$. In fact, We 
construct a pair of weights $(u,v)$ satisfying \eqref{twowcondfrac}, a
function $f$, and a $BMO$ function $b$, such that that the weak type
inequality
\[ u(\{x\in \R^n: |[b,I_\al]f(x)|>1\})\leq C\int_{\R^n} |f(x)|^k
v(x)\, dx, \]
does not hold for any constant $C>0$.  

Our example is similar to the
example for the Hilbert transform given above.  Let
$\Phi_k(t)=t\log(t+e)^{2k-1}$ and consider the pair
$(u,v)=(u,M_{\Phi_k,k\alpha}u)$.    The proof of 
Lemma~\ref{lemma:reverse-factorization} can be easily modified to show
that $(u,v)$ satisfies~\eqref{twowcondfrac} with $A(t)=t^{p}\log(e+t)^{2p-1}$,
$B(t)=t^{p'}\log(e+t)^{2p'-1+\delta}$, $\delta>0$.  (In fact, we can
take $B$ to be any Young function.)

To work with this pair, we express $v=M_{\Phi_k,k\alpha}u$ in a
different way.  
By an inequality of Stein (see  Wilson~\cite [Chapter 10]{Wilson}),
$$ \|f\|_{\Phi_k,Q} \leq c\dashint_Q M^{2k-1} f(x)\, dx,$$
where $M^j$ is the composition of $M$ with itself $j$ times.  It follows that 
$$M_{\Phi_k,k\al}f\leq c M_{k\al} (M^{2k-1} f).$$
On the other hand we have (see \cite [Example 5.42]{CMP4})
$$M_{k\al}(M^{2k-1}f)\leq c M_{\Phi_k,k\al}f; $$
hence,
$$M_{\Phi_k,k\al}f \approx M_{k\al}(M^{2k-1}f).$$

Now define the weight 
$$u(x)=\frac{\chi_{\R^n\backslash B(0,e^e)}(x)}{|x|^n\log |x|\log\log |x|}.$$
Then for $|x|> e^{e^e}$, calculations show 
$$M^{2k-1} u(x) \approx \frac{(\log |x|)^{2k-2}\log\log\log |x|}{|x|^n}.$$
and
$$M_{k\al}(M^{2k-1}u)(x) \approx \frac{(\log |x|)^{2k-1}\log\log\log |x|}{|x|^{n-k\al}}.$$

Define the function $f$ by
$$f(x)=\frac{\chi_{\R^n\backslash B(0,e^{e^e})}(x)}{|x|^\al(\log |x|)^2\log\log |x|}.$$
Then
\begin{multline*}
\int_{\R^n} |f(x)|^k M_{k\al}(M^{2k-1} u)(x)\, dx \\
\approx \int_{\R^n\backslash B(0,e^{e^e})} 
\frac{\log\log\log |x|}{|x|^n\log |x|(\log\log |x|)^k}\, dx<\infty.
\end{multline*}
Further, for each $x$, $0\leq I_\al f(x)<\infty.$  

Finally, let $b(x)=\log|x|$; then for $|x|>e^{e^e}$ we have
\begin{multline*}
I_\al(bf)(x) =
\int_{\R^n\backslash B(0,e^{e^e})}\frac{\log|y|}{|x-y|^{n-\al}
|y|^\al(\log|y|)^2\log\log|y|}\, dy\\
\geq  2^{\al-n}\int_{\R^n\backslash B(0,|x|)}\frac{1}{|y|^n\log|y|\log\log|y|}\, dy=\infty.
\end{multline*}
Hence,
$$u(\{x\in \R^n: |[b,I_\al]f(x)|>1\})\geq \int_{\R^n\backslash B(0,e^{e^e})} u(x)\, dx =\infty.$$

\subsection{Sharp one weight bound for $[b,I_\al]$}
We show the estimate \eqref{sharpbound} is sharp in the sense that the
exponent $(2-\alpha/n)\max(1,p'/q)$ cannot be replaced by any smaller
power.  It will suffice to prove this assuming that $p'/q\geq 1$; the
case when $p'/q<1$ follows at once by duality, using the fact that the
commutator is essentially self-adjoint (i.e.,
$[b,I_\al]^*=-[b,I_\al]$) and the fact that if $w\in A_{p,q}$, then $w^{-1}\in
A_{q',p'}$ and $[w^{-1}]_{A_{q',p'}}=[w]_{A_{p,q}}^{p'/q}$.

For each $\delta \in (0,1)$, define the weight
$w_\delta(x)=|x|^{(n-\delta)/p'}$ and the power functions
$f_\delta(x)=|x|^{\delta-n}\chi_{B(0,1)}(x)$.  A straightforward
computation show that
\[ \|f_\delta\|_{L^p(w_\delta^p)}\approx \delta^{-1/p}. \]
Further, we have that
\[ [w_\delta]_{A_{p,q}}\approx \delta^{-q/p'}\; \]
Since $w_\delta$ is a radial function, it suffices to check this for
balls centered at the origin, again a straightforward computation.

Let $b$ be the $BMO$ function $b(x)=\log|x|$.  We estimate the
commutator as follows.  For $x\in \R^n$, $|x|\geq 2$, 
\begin{align*}
[b,I_\al]f_\delta(x)
& =\int_{B(0,1)}\frac{\log(|x|/|y|)}{|x-y|^{n-\al}}  |y|^{\delta-n}\, dy \\
& = |x|^{\delta-n+\alpha}
\int_{B(0,|x|^{-1})}\frac{\log(1/|z|)}{|x/|x|-z|^{n-\al}}
  |z|^{\delta-n}\, dy \\
& \geq |x|^{\delta-n+\alpha}
\int_{B(0,|x|^{-1})}\frac{\log(1/|z|)}{(1+|z|)^{n-\al}}  |z|^{\delta-n}\, dy \\
& \geq \frac{|x|^{\delta}}{(1+|x|)^{n-\al}}|\mathbb S^{n-1}|\int_0^{|x|^{-1}} \log(1/r)r^{\delta-1}\,dr \\
& \geq \frac{c}{\delta^2|x|^{n-\al}},
\end{align*}
where $|\mathbb S^{n-1}|$ is the surface measure of the unit sphere in
$\R^n$.  (See \cite [p. 11]{CPP} for a similar calculation). 

Integrating this inequality, and using the fact that
$1/p-1/q=\alpha/n$ and $p'/q\geq 1$, we get that
\begin{multline*}
 \|[b,I_\alpha]f_\delta\|_{L^q(w_\delta^q)} \geq
c\delta^{-2}\left(\int_{\R\backslash B(0,2)}
  \frac{|x|^{(n-\delta)q/p'}}{|x|^{(n-\al)q}}\,dx\right)^{1/q} \\
=c\delta^{-2}\left(\int_{\R\backslash B(0,2)}|x|^{-\delta q/p'-n}\,dx\right)^{1/q} 
= c\delta^{-2-\frac{1}{q}} \\
=
c[w_\delta]_{A_{p,q}}^{(2-\frac{\alpha}{n})\max\left(1,\frac{p'}{q}\right)}\|f_\delta\|_{L^p(w_\delta^p)}.
\end{multline*}

Since this is true for every $\delta>0$, it follows that we cannot
take any smaller exponent in \eqref{sharpbound}.

\subsection{Sharp weighted Sobolev inequality}
We will show that the power $1/n'$ is sharp in \eqref{sobolev}.   Unlike
the previous example, since we are dealing with regular functions we
have to replace the cut-off function $\chi_{B(0,1)}$ with a smooth
function that has exponential decay.  

Fix $p,\,q$ such that $1\leq p<n$ and $\frac1p-\frac1q=\frac1n$,  and
take any $\delta\in (0,1)$.   Define the weight
\[ w_\delta(x)=|x|^{(\delta-n)/q}; \]
if $p>1$, then arguing as in the previous example we have that 
\[
[w_\delta]_{A_{p,q}}=[w_\delta^{-1}]_{A_{q',p'}}^{\frac{q}{p'}}=\delta^{-1}.
\]
If $p=1$, we also have $[w_\delta]_{A_{1,q}}\approx \delta^{-1}$ (see
\cite[Section 7]{LMPT}).  

Define $f_\delta(x)=\exp(-|x|^\delta)$; then we immediately have that
$$|\nabla f_\delta(x)|=\delta |x|^{\delta-1}\exp(-|x|^\delta).$$
Further, we have that 
\begin{multline*}
 \|w_\delta f_\delta\|_{L^q}=
\left(\int_{\R^n} \exp(-q|x|^\delta)|x|^{\delta-n}\, dx\right)^{1/q}
\\
= \left(|\mathbb S^{n-1}|\int_0^\infty e^{-qr^\delta} r^{\delta-1}\,dr\right)^{1/q}
=\left(\frac{|\mathbb S^{n-1}|}{q\delta}\right)^{1/q}=c\delta^{-1/q},
\end{multline*}
where again $|\mathbb S^{n-1}|$ is the surface measure of the unit sphere in $\R^n$.
Similarly, 
\begin{multline*}
\|\nabla f_\delta\|_{L^p(w_\delta^p)}
=\delta\left(\int_{\R^n} \exp(-p|x|^\delta)|x|^{(\frac{p}{n'}+1)\delta-n}\, dx\right)^{1/p}\\
=\delta^{1-1/p} |\mathbb S^{n-1}|^{1/p} 
\left(\int_0^\infty e^{-pu}u^{\frac{p}{n'}}\, du\right)^{1/p}=c\delta^{1-1/p}.
\end{multline*}
Combining these estimates we get 
$$\delta^{-1/q}\approx 
\|f_\delta\|_{L^q(w_\delta^q)}\leq c[w_\delta]^{1/n'}\||\nabla f_\delta| \|_{L^p(w_\delta^p)}
\approx \delta^{-1/n'} \delta^{1-1/p}=\delta^{1/n-1/p};$$
since this is true for all $\delta>0$, we see that the exponent $1/n'$
is sharp.

\end{document}